\newcommand{\w}{\omega}
\newcommand{\X}{\ensuremath{X}}
\renewcommand{\d}{\ensuremath{\partial}}
\newcommand{\mc}{\mathcal}
\newcommand{\alphlist}{\begin{list}{(\alph{enumi})}{\usecounter{enumi}}}
\newcommand{\romanlist}{\begin{list}{(\roman{enumi})}{\usecounter{enumi}}}
\newcommand{\listend}{\end{list}}
\renewcommand{\:}{\colon}
\renewcommand{\=}{\coloneqq}
\newcommand{\ssq}{\ensuremath{\subseteq}}
\newcommand{\eps}{\ensuremath{\varepsilon}}
\newcommand{\lam}{\ensuremath{\lambda}}
\newcommand{\T}{\ensuremath{\mathbb{T}}}
\newcommand{\N}{\ensuremath{\mathbb{N}}} 
\newcommand{\R}{\ensuremath{\mathbb{R}}}
\newcommand{\Z}{\ensuremath{\mathbb{Z}}}
\newcommand{\kreis}{\ensuremath{\mathbb{T}^{1}}}
\newcommand{\I}{\ensuremath{\mathcal I}}
\newcommand{\A}{\ensuremath{\mathcal A}}
\newcommand{\B}{\ensuremath{\mathcal B}}
\newcommand{\nLim}{\ensuremath{\lim_{n\rightarrow\infty}}}
\newcommand{\Leb}{\ensuremath{\operatorname{Leb}}}
\theoremstyle{plain} 
\newtheorem{thm}{Theorem}[section]
\newtheorem{lem}[thm]{Lemma}
\newtheorem{prop}[thm]{Proposition}
\newtheorem{cor}[thm]{Corollary}
\theoremstyle{definition}
\newtheorem{defn}[thm]{Definition}
\theoremstyle{remark}
\newtheorem*{rem}{Remark}
\numberwithin{equation}{section}
\numberwithin{thm}{section}
\newcommand*{\rom}[1]{\expandafter\@slowromancap\romannumeral #1@}
\title{\Large\textsc{Non-smooth saddle-node bifurcations \rom{2}:
    dimensions of strange attractors}}  \author{ G. Fuhrmann
		\thanks{Department of Mathematics, TU Dresden, Germany. Email: 
		{\tt Gabriel.Fuhrmann@mailbox.tu-dresden.de}}
	\and M. Gr\"oger
		\thanks{Department of Mathematics, Universit\"at Bremen,
		Germany. Email: {\tt groeger@math.uni-bremen.de}} 
	\and T. J\"ager
		\thanks{Department of Mathematics, TU Dresden, Germany. Email: 
		{\tt Tobias.Oertel-Jaeger@tu-dresden.de}}
}
\date{\today}
\begin{document}
\maketitle
\begin{abstract}
  We study the geometric and topological properties of strange
  non-chaotic attractors created in non-smooth saddle-node bifurcations
  of quasiperiodically forced interval maps. By interpreting the
  attractors as limit objects of the iterates of a continuous curve
  and controlling the geometry of the latter, we determine their
  Hausdorff and box-counting dimension and show that these take distinct
  values. Moreover, the same approach allows to describe the
  topological structure of the attractors and to prove their
  minimality.
\end{abstract}

\section{Introduction}

One of the most intriguing phenomena in dynamical systems is the
existence of strange attractors and the fact that these intricate
structures already occur for relatively simple deterministic systems
given by low-dimensional maps and flows. The discovery of paradigm
examples like the H\'enon or the Lorenz attractor has given great
impetus to the field. Usually, strange attractors are associated with
chaotic dynamics. However, this is not always the case, and in a
seminal paper \cite{grebogi/ott/pelikan/yorke:1984} Grebogi, Ott, Pelikan and Yorke demonstrated
that such objects may also occur in systems which do not allow for
chaotic motion -- in the sense of positive topological entropy -- for
structural reasons. Their heuristic and numerical arguments were later
confirmed in a rigorous analysis by Keller \cite{keller:1996}.  The
class of systems considered in \cite{grebogi/ott/pelikan/yorke:1984,keller:1996} were
\emph{quasiperiodically forced (qpf) monotone interval maps}. These are skew
product transformations of the form
\begin{equation}\label{eq: skew-product introduction}
  f\:  \T^d\times X  \to \T^d\times X, \quad 
  (\theta,x)\mapsto (\theta+\w,f_\theta(x)),
\end{equation}
where $\T^d=\R^d/\Z^d$, $X\ssq \R$ is an interval (possibly
non-compact), the rotation vector $\omega\in\T^d$ on the base is
totally irrational and for each $\theta\in\T^d$ the {\em fibre map}
$f_\theta: X\to X$ is a monotone interval map.\footnote{The fact
  that skew product systems of this type do not allow for positive
  entropy follows from an old result of Bowen \cite{Bowen1971}, see
  also \cite{glendinning/jaeger/stark:2009}.}

The specific examples in \cite{grebogi/ott/pelikan/yorke:1984,keller:1996} belong to the class
of so-called {\em pinched skew products}, which are characterised by
the fact that for some $\theta\in \T^d$ the fibre map $f_\theta$ is
constant and consequently the whole fibre $\{\theta\}\times X$ is
mapped to a single point \cite{glendinning:2002}. This greatly
simplifies their analysis, but at the same time it gives them a
certain toy model character. In particular, pinched skew products are
not invertible and can therefore not be the time-one maps of flows,
which are of main interest from the applied point of view.
Notwithstanding, it was later confirmed both numerically
(e.g.~\cite{romeiras/etal:1987,feudel/kurths/pikovsky:1995}) and even
experimentally \cite{Ditto/etal:1990,Ditto/etal:1989} that the
occurrence of strange non-chaotic attractors (SNA's) in systems with quasiperiodic forcing is a widespread
and robust phenomenon, and general methods to rigorously prove their
existence have been established in different settings
\cite{young:1997,bjerkloev:2005a,bjerkloev2009sna,jaeger:2009a,fuhrmann2013NonsmoothSaddleNodesI}.
Thereby, it has turned out that SNA's often play a crucial role in the
bifurcations of invariant curves and often originate from the
collision of these. This pattern for the creation of SNA's has been
named {\em torus collision} or, more specifically, {\em non-smooth
  saddle-node bifurcation}
\cite{heagy/hammel:1994,prasad/negi/ramaswamy:2001,AnagnostopoulouJaeger2012SaddleNodes}.

In contrast to conditions for the existence of SNA's, the structural
properties of these objects are far less understood. From the
mathematical viewpoint, much of the relevant information about the
geometric and dynamical features of an attractor is encoded in
different notions of dimension. Accordingly, the question of computing
dimensions of SNA's has been raised already at an early stage. Based on
numerical evidence, it has been conjectured in
\cite{DingGrebogiOtt1989Dimensions} that the box (or capacity)
dimension of SNA's appearing in different types of qpf systems with one-dimensional base $\kreis$ and one-dimensional
fibres equals two, whereas the information dimension equals one. For
the simple pinched skew products introduced in
\cite{grebogi/ott/pelikan/yorke:1984}, these findings were confirmed
analytically in \cite{jaeger:2007,GroegerJaeger2012SNADimensions}.

The aim here is to perform a similar analysis for SNA's appearing in a
more realistic setting. We concentrate on invertible qpf interval maps
and focus on such SNA's which are created in non-smooth saddle-node
bifurcations. Apart from the dimensions, we obtain the minimality of
the dynamics on the attractors and information about their topological
structure. On an heuristic level, some inspiration is drawn from the
previous work in \cite{jaeger:2007,GroegerJaeger2012SNADimensions}.
Technically, however, the task is considerably more demanding and our
approach builds on a detailed multiscale analysis established in the
first author's article \cite{fuhrmann2013NonsmoothSaddleNodesI}, whose continuation this work presents.
Before stating precise results, we need to introduce some general
notions and a framework for non-smooth saddle-node bifurcations in qpf
interval maps. The latter results from a discrete-time analogue to
work of N\'u\~nez and Obaya on almost periodically forced scalar
differential equations \cite{nunez}, which is provided in
\cite{AnagnostopoulouJaeger2012SaddleNodes}. \medskip

Given $f$ as in (\ref{eq: skew-product introduction}), an {\em
  $f$-invariant graph} is a measurable function $\phi:\T^d\to X$
that satisfies
\begin{equation*}
  f_\theta(\phi(\theta)) \ = \ \phi(\theta+\omega) 
\end{equation*}
for all $\theta\in\T^d$. The associated point set
$\Phi=\{(\theta,\phi(\theta))\mid\theta\in\T^d\}$ is invariant in
this case, and slightly abusing terminology we will refer to both the
function $\phi$ and the set $\Phi$ as an invariant graph. As far as
functions are concerned, we will not distinguish between invariant
graphs that coincide Lebesgue-almost everywhere, and thus implicitly
speak of equivalence classes. By saying an invariant graph has a
certain property, like continuity or semi-continuity, we mean that
there exists a representative in the respective equivalence class
which has this property. The stability of an invariant graph is
determined by its Lyapunov exponent
\begin{equation*}
  \lambda(\phi) \ = \ \int_{\T^d} \log f'_\theta(\phi(\theta)) \ d\theta .
\end{equation*}
If $\lambda(\phi)<0$, then $\phi$ is attracting, in the sense that for
almost every $\theta\in \T^d$ there is $\eps=\eps(\theta)>0$ such that
\begin{equation*}
\left|f^n(\theta,x)-(\theta+n\w,\phi(\theta+n\w))\right|\ \to \ 0
\end{equation*}
for $n\to \infty$ and $x\in B_{\eps}(\phi(\theta))$
\cite{jaeger:2003}. If $\phi$ is continuous, then $\eps$ can be
chosen independent of $\theta\in\T^d$ \cite{sturman/stark:2000}. An
SNA, in this setting, is a non-continuous invariant graph with a
negative Lyapunov exponent. `Strange' here simply refers to the lack
of continuity. We refer to Milnor \cite{milnor:1985} for a broader
discussion of the notion of `strange attractors'.

In the context of forced systems, the significance of invariant graphs
stems from the fact that they are a natural analogue to fixed points of unperturbed
maps, and just like the latter they may bifurcate. As mentioned above,
we will concentrate on saddle-node bifurcations.
In order to keep notation as simple as possible, we may assume without loss of generality
that $[0,1]\ssq \X$ from now on.
We denote by $\mathcal{F}_\omega$ the class of $\mathcal{C}^2$-maps of
the form (\ref{eq: skew-product introduction}) (with fixed rotation
vector $\omega\in \T^d$ in the base). Further, by $\mathcal{P}_\omega$
we denote $\mathcal{C}^2$ one-parameter families in
$\mathcal{F}_\omega$, that is,
\begin{equation*}
    \mathcal{P}_\omega \ = \ \left\{\left. \left(f_\beta\right)_{\beta\in[0,1]}
   \ \right| \ f_\beta \in \mathcal{F}_\omega \textrm{ for all } \beta\in[0,1]
    \textrm{ and } (\beta,\theta,x)\mapsto f_{\beta,\theta}(x)
 \textrm{ is } \mathcal{C}^2 \ \right\} .
\end{equation*}
Elements of $\mathcal{P}_\omega$ will also be denoted by $\hat
f=\left(f_\beta\right)_{\beta\in[0,1]}$. We equip $\mathcal{P}_\omega$
with the $\mathcal{C}^2$-metric and simply refer to the induced
topology as $\mathcal{C}^2$-topology in all of the following. In order
to ensure the occurrence of a saddle-node bifurcation in a prescribed
region $\Gamma=\T^d\times[0,1]$ of the phase space, we need to impose
a number of further conditions. The following assumptions are supposed
to hold for all $\beta\in[0,1]$ and all $\theta\in\T^d$ (if
applicable).
\begin{eqnarray}
  \label{eq:4}
  f_{\beta,\theta}(0) & \leq &  0 \quad \textrm{ and } \quad   f_{\beta,\theta}(1)\ \ \leq \ \ 1 ;\\
  f'_{\beta,\theta}(x) & > & 0 \quad \textrm{ for all } x\in[0,1]  ;\label{e.monotonicity}\\
  f''_{\beta,\theta}(x) & < & 0 \quad \textrm{ for all } x\in[0,1] ; \label{e.concavity}\\
  \textstyle \frac{\partial}{\partial\beta} f'_{\beta,\theta}(x) & < & 0 \quad \textrm{ for all } x\in[0,1] ;\\
  f_0 \textrm{ has } & & \hspace{-2.6eM} \textrm{two continuous invariant graphs in } \Gamma
   \textrm{ and } f_1 \textrm{ has no invariant graph in } \Gamma . \label{eq:5}
\end{eqnarray}
Here, we say $f$ has an invariant graph $\phi$ in $\T^d\times A$ if
$\phi(\theta)\in A$ for all $\theta\in\T^d$. We let
\begin{equation*}
  \mathcal{S}_\omega \ = \ \left\{ \hat f\in\mathcal{P}_\omega \left|\  \hat f 
  \textrm{ satisfies (\ref{eq:4})--(\ref{eq:5})} \ \right.\right\}  . 
\end{equation*}
\begin{thm}[{\cite[Theorem 6.1]{AnagnostopoulouJaeger2012SaddleNodes}}]\label{t.saddle-node}
  Let $\hat f=\left(f_\beta\right)_{\beta\in[0,1]}\in
  \mathcal{S}_\omega$. Then there exists a unique critical parameter
  $\beta_c\in(0,1)$ such that the following holds.  \romanlist
\item If $\beta<\beta_c$, then $f_\beta$ has two invariant graphs
  $\phi^-_\beta<\phi^+_\beta$ in $\Gamma$, both of which are
  continuous. We have $\lambda(\phi^-_\beta)>0$ and $\lambda(\phi^+_\beta)<0$. 
\item If $\beta>\beta_c$, then $f_\beta$ has no invariant graphs in
  $\Gamma$.
\item If $\beta=\beta_c$, then one of the following two possibilities
  hold.
\alphlist
\item[($\mathcal{S}$)] \underline{\em Smooth bifurcation:} $f_{\beta_c}$ has a
  unique invariant graph $\phi_{\beta_c}$ in $\Gamma$, which satisfies
  $\lambda(\phi_{\beta_c})=0$. Either $\phi$ is continuous, or it contains both
  an upper and lower semi-continuous representative in its equivalence
  class.
\item[($\mathcal{N}$)] \underline{\em Non-smooth bifurcation:} $f_{\beta_c}$ has
  exactly two invariant graphs $\phi^-_{\beta_c}<\phi^+_{\beta_c}$ a.e. in
  $\Gamma$. The graph $\phi^-_{\beta_c}$ is lower semi-continuous,
  whereas $\phi^+_{\beta_c}$ is upper semi-continuous, but none of the
  graphs is continuous and there exists a residual set
  $\Omega\ssq\T^d$ such that
  $\phi^-_{\beta_c}(\theta)=\phi^+_{\beta_c}(\theta)$ for all
  $\theta\in\Omega$. \listend \listend
\end{thm}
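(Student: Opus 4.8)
The approach is to follow the discrete-time analogue, carried out in \cite{AnagnostopoulouJaeger2012SaddleNodes}, of N\'u\~nez and Obaya's treatment \cite{nunez} of scalar almost periodically forced differential equations, organising everything around the maximal invariant set $\mc A_\beta=\bigcap_{n\geq 0}f_\beta^{-n}(\Gamma)$ inside $\Gamma$ together with its upper and lower bounding graphs. Fix $\beta$. By (\ref{e.monotonicity}) every fibre map $f_{\beta,\theta}$ is strictly increasing on $[0,1]$, and by (\ref{eq:4}) it satisfies $f_{\beta,\theta}(0)\leq 0$ and $f_{\beta,\theta}(1)\leq 1$; hence orbits can leave $\Gamma$ only through the lower boundary, and each fibre $\mc A_\beta(\theta)$ is a closed (possibly empty or degenerate) subinterval of $[0,1]$. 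Since $\mc A_\beta$ is closed with compact fibres, if it is non-empty then $\phi^+_\beta(\theta):=\max\mc A_\beta(\theta)$ and $\phi^-_\beta(\theta):=\min\mc A_\beta(\theta)$ are invariant graphs which are respectively upper and lower semi-continuous, $\phi^+_\beta$ is the pointwise decreasing limit of the iterates $f_\beta^n\mathbf 1$ of the constant curve $\mathbf 1\equiv 1$, and every invariant graph of $f_\beta$ in $\Gamma$ lies between $\phi^-_\beta$ and $\phi^+_\beta$. The strict concavity (\ref{e.concavity}) then forces \emph{at most two} invariant graphs in $\Gamma$: restricted to $\mc A_\beta$ the fibre maps are increasing, strictly concave bijections of $[\phi^-_\beta(\theta),\phi^+_\beta(\theta)]$ onto $[\phi^-_\beta(\theta+\w),\phi^+_\beta(\theta+\w)]$, so no third invariant curve fits in between, and for any curve $\psi$ strictly between them strict concavity gives $f'_{\beta,\theta}(\phi^+_\beta(\theta))<f'_{\beta,\theta}(\psi(\theta))<f'_{\beta,\theta}(\phi^-_\beta(\theta))$ for a.e.\ $\theta$, which later yields the separation of Lyapunov exponents.

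Next I would set up the bifurcation diagram. From (\ref{eq:4}) and the monotonicity assumption $\partial_\beta f'_{\beta,\theta}(x)<0$ one deduces that the fibre maps are ordered in $\beta$ on the relevant range, so $\beta\mapsto\phi^+_\beta$ is non-increasing and $B:=\{\beta\in[0,1]:\mc A_\beta\neq\emptyset\}$ is an interval; it contains $0$ but not $1$ by (\ref{eq:5}), and it is closed because $\Gamma$ is compact and $(\beta,\theta,x)\mapsto f_{\beta,\theta}(x)$ is continuous (pick $x_n\in\mc A_{\beta_n}(\theta_0)$ and pass to a subsequential limit). Hence $B=[0,\beta_c]$ for a unique $\beta_c\in(0,1)$, which gives (ii). For $\beta<\beta_c$ one combines the persistence of hyperbolic invariant graphs (available from $\beta=0$ by (\ref{eq:5})) with a closedness argument on $[0,\beta_c)$ to obtain two invariant graphs $\phi^-_\beta<\phi^+_\beta$ that are uniformly separated; strict concavity turns the uniform gap into uniform hyperbolicity, so both are continuous (cf.\ \cite{sturman/stark:2000}) with $\lambda(\phi^-_\beta)>0>\lambda(\phi^+_\beta)$, which is (i).

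It remains to treat $\beta=\beta_c$, where $\mc A_{\beta_c}\neq\emptyset$, so $f_{\beta_c}$ has one or two invariant graphs in $\Gamma$. Letting $\beta\uparrow\beta_c$ and using dominated convergence (the integrands $\log f'_{\beta,\theta}(\phi^\pm_\beta(\theta))$ being uniformly bounded) gives $\lambda(\phi^+_{\beta_c})\leq 0\leq\lambda(\phi^-_{\beta_c})$; in particular a continuous hyperbolic invariant graph at $\beta_c$ would persist past $\beta_c$, contradicting the definition of $\beta_c$. If $\phi^-_{\beta_c}=\phi^+_{\beta_c}$ there is a unique invariant graph, which simultaneously carries the u.s.c.\ representative $\phi^+_{\beta_c}$ and the l.s.c.\ representative $\phi^-_{\beta_c}$ (hence is continuous precisely when these agree everywhere) and has $\lambda=0$: this is case $(\mc S)$. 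If $\phi^-_{\beta_c}\neq\phi^+_{\beta_c}$ we are in case $(\mc N)$: $\phi^-_{\beta_c}$ is l.s.c.\ and $\phi^+_{\beta_c}$ u.s.c., and the coincidence set $\Omega=\{\phi^-_{\beta_c}=\phi^+_{\beta_c}\}$ is a $G_\delta$ (the gap $d:=\phi^+_{\beta_c}-\phi^-_{\beta_c}$ is u.s.c.), is invariant under the totally irrational---hence minimal---base rotation (the fibre maps being injective), and is non-empty: if $d$ were bounded away from $0$, integrating the cocycle identity $d(\theta+\w)=f'_{\beta_c,\theta}(\xi_\theta)\,d(\theta)$ with $\xi_\theta\in(\phi^-_{\beta_c}(\theta),\phi^+_{\beta_c}(\theta))$ would give $\int\log f'_{\beta_c,\theta}(\xi_\theta)\,d\theta=0$ and hence, by strict concavity, $\lambda(\phi^+_{\beta_c})<0<\lambda(\phi^-_{\beta_c})$, i.e.\ uniform hyperbolicity, which was excluded. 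Thus $\Omega$ is a non-empty invariant $G_\delta$, hence residual and dense; and neither graph is continuous, since a continuous invariant graph at $\beta_c$ coexisting with a second one would, by strict concavity, again be uniformly hyperbolic and persist past $\beta_c$.

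The technical heart, and the main obstacle, is this final step: showing that the dichotomy $(\mc S)/(\mc N)$ is exhaustive, that in case $(\mc N)$ neither bounding graph is continuous, and---above all---that the coincidence set $\Omega$ is non-empty. This needs a delicate interplay of the strict concavity (\ref{e.concavity}) (which controls both the number of graphs and the fibrewise contraction rates), the vanishing or non-positivity of the Lyapunov exponent at criticality, and the minimality of the base rotation (which spreads one coincidence point to a residual set); it is exactly the ``non-smooth'' content of the theorem. A subsidiary point, used already in the previous step, is to extract the monotone ordering of the family in $\beta$ from $\partial_\beta f'_{\beta,\theta}(x)<0$ together with (\ref{eq:4}).
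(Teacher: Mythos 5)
The paper does not prove this theorem; it is cited verbatim from \cite{AnagnostopoulouJaeger2012SaddleNodes}, so there is no ``paper's own proof'' to compare against. I will therefore assess your sketch on its merits.

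Your overall organisation -- working with the maximal invariant set, its semi-continuous bounding graphs, the ``at most two graphs'' consequence of strict concavity, the interval structure of the existence parameter set, and the $G_\delta$/invariance/minimality argument for residuality of the pinching set -- is the right strategy and matches the spirit of the reference. Two steps, however, have genuine gaps.

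First, the claim that \eqref{eq:4} together with $\partial_\beta f'_{\beta,\theta}(x)<0$ forces the fibre maps to be pointwise ordered in $\beta$ does not follow. Writing $\partial_\beta f_{\beta,\theta}(x)=\partial_\beta f_{\beta,\theta}(0)+\int_0^x\partial_\beta f'_{\beta,\theta}(s)\,ds$, condition $\partial_\beta f'<0$ only makes the \emph{increment} over the base point decrease with $\beta$; it says nothing about $\partial_\beta f_{\beta,\theta}(0)$, and \eqref{eq:4} only constrains $f_{\beta,\theta}(0)\leq 0$, not its $\beta$-derivative. So you cannot conclude $\partial_\beta f_{\beta,\theta}\leq 0$, and consequently neither the monotonicity of $\beta\mapsto\phi^+_\beta$ nor the interval structure of $B$ follows the way you assert. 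Establishing that $B$ is an interval and that the graphs degenerate monotonically is precisely where the hypothesis $\partial_\beta f'<0$ has to be exploited more carefully (via the $\beta$-monotonicity of Lyapunov exponents and the concavity, not via pointwise ordering of the fibre maps).

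Second, and this is the step you yourself flag as the ``technical heart'', the argument for non-emptiness of $\Omega$ is circular as written. You assume $\inf_\theta d(\theta)>0$, integrate the mean-value identity to get $\lambda(\phi^+_{\beta_c})<0<\lambda(\phi^-_{\beta_c})$, and then declare ``uniform hyperbolicity, which was excluded''. But the only exclusion you have established is the persistence argument, which applies to \emph{continuous} hyperbolic invariant graphs. At this point $\phi^\pm_{\beta_c}$ are only known to be semi-continuous, and semi-continuous graphs with non-zero Lyapunov exponent need not be continuous -- indeed that is exactly the situation in case $(\mathcal{N})$. To close the gap you would have to show that a uniform gap together with the strict inequalities $\lambda(\phi^+)<0<\lambda(\phi^-)$ forces the existence of \emph{continuous} invariant graphs (for instance via a normal-hyperbolicity or graph-transform argument on the compact strip $[\phi^-,\phi^+]$), which then persist past $\beta_c$ and yield the contradiction. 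As stated, the step ``uniform hyperbolicity $\Rightarrow$ contradiction'' is asserted but not justified.
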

\begin{rem}
The points in the above set $\Omega$ are called \emph{pinched} points. Due to the semi-continuity, it turns out that $\phi_{\beta_c}^+$ and $\phi_{\beta_c}^-$ are actually continuous in the pinched points (cf. \cite[Lemma~5]{Stark}).
\end{rem}

As said before, the invariant graphs appearing in this statement
have to be understood in the sense of equivalence classes. There is,
however, an intimate relation to the maximal invariant subset of
$\Gamma$, given by 
\begin{equation*}
  \Lambda_\beta \ = \ \bigcap_{n\in\Z} f^n_{\beta}(\Gamma) ,
\end{equation*}
that can be used to obtain well-defined canonical representatives.
This will be important in the statement of our main result.  We write
\begin{equation*}
\Lambda_{\beta,\theta} \  = \  \{x\in[0,1]\mid
(\theta,x)\in\Lambda_\beta\}  .
\end{equation*}
Due to the invariance of $\Lambda_\beta$ and the monotonicity of the
fibre map (\ref{e.monotonicity}), the graphs
\begin{equation}
  \label{eq:8}
  \hat\phi_\beta^-(\theta) \ = \inf \Lambda_{\beta,\theta} \quad\textrm{ and } 
  \quad \hat\phi_\beta^+(\theta) \ = \ \sup\Lambda_{\beta,\theta} 
\end{equation}
are both invariant and thus have to be representatives of
the invariant graphs in part (i) and (iii) of
Theorem~\ref{t.saddle-node}. Moreover, if we write
$\left[\hat\phi^-_\beta,\hat\phi^+_\beta\right]=\left\{(\theta,x)\in\Gamma\mid
\hat\phi^-_\beta(\theta)\leq x \leq
\hat\phi^+_\beta(\theta)\right\}$, then
$\Lambda_\beta=\left[\hat\phi^-_\beta,\hat\phi^+_\beta\right]$.

Theorem~\ref{t.saddle-node} gives a precise meaning to the notion of a
saddle-node bifurcation for a family in $\mathcal{S}_\omega$.
Moreover, it shows that there are two qualitatively different patterns
for such a transition, namely the smooth and the non-smooth case.
While smooth bifurcations can be realised easily by considering direct
products of irrational rotations and suitable interval maps, the
existence of non-smooth bifurcations is much more difficult to establish.
However, as the following result shows, they are nevertheless a
generic case. Recall that $\omega\in\T^d$ is {\em Diophantine} if
there exist $\mathscr C>0$ and $\eta>1$ such that $d(k\w,0)\geq
\mathscr C |k|^{-\eta}$ for all $k\in \Z\setminus\{0\}$.
\begin{thm}[\cite{fuhrmann2013NonsmoothSaddleNodesI}] Let 
  \begin{equation*}
    \mathcal{N}_\omega \ = \ \left\{\hat f\in\mathcal{S}_\omega\mid f_{\beta_c} 
   \textrm{ satisfies } (\mathcal{N}) \right\} \ 
  \end{equation*}
  and suppose $\omega\in\T^d$ is Diophantine. Then
  $\mathcal{N}_\omega$ has non-empty interior in the
  $\mathcal{C}^2$-topology on $\mathcal{P}_\omega$.
\end{thm}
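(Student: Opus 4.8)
The statement asserts two things at once: non-smooth saddle-node bifurcations \emph{occur}, and they occur on an \emph{open} set of families. The plan is to decouple these. First I would distil from Theorem~\ref{t.saddle-node} a sufficient condition for $\hat f\in\mathcal N_\omega$ that depends only on the $\mathcal C^2$-jet of $\hat f$ over a compact region and on finitely many orbit segments, and is therefore automatically $\mathcal C^2$-open. Then I would construct a single family satisfying this condition, using the Diophantine property of $\omega$. Producing the required open neighbourhood then reduces to producing one good example together with the openness bookkeeping. The Diophantine hypothesis is needed only for the construction, where it provides quantitative control of recurrences under the base rotation; the openness part uses merely that $\omega$ is totally irrational (hence the base rotation is uniquely ergodic).

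For the criterion, recall that by Theorem~\ref{t.saddle-node} a family $\hat f\in\mathcal S_\omega$ belongs to $\mathcal N_\omega$ precisely when case $(\mathcal S)$ fails at its critical parameter $\beta_c$, and $(\mathcal S)$ forces $f_{\beta_c}$ to have a \emph{unique} invariant graph, of vanishing Lyapunov exponent. It therefore suffices to show that the canonical bounding graphs $\hat\phi^-_{\beta_c}<\hat\phi^+_{\beta_c}$ from (\ref{eq:8}) are genuinely distinct, i.e.\ that $\Leb\{\theta:\hat\phi^-_{\beta_c}(\theta)<\hat\phi^+_{\beta_c}(\theta)\}>0$; and for this it is enough to establish the stronger, more robust statement $\lambda(\hat\phi^+_{\beta_c})<0$, which is incompatible with $(\mathcal S)$. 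Since $\hat\phi^+_\beta$ is the non-increasing limit of the iterates, under $f_\beta$, of the top curve $\theta\mapsto 1$ of $\Gamma$, I would phrase the condition in terms of the geometry of these iterates over finitely many scales: after some number $N$ of steps the iterate is strongly contracted towards the bottom of $\Gamma$ over a large portion of the base, yet is pushed up over a small but non-negligible set of fibres. Hypotheses (\ref{e.monotonicity})--(\ref{e.concavity}) make $\Gamma$ forward-trapping with a well-behaved graph transform; by part (i) of Theorem~\ref{t.saddle-node} the two continuous invariant graphs of $f_0$ have nonzero Lyapunov exponents, hence are uniformly hyperbolic and persist under $\mathcal C^1$-perturbations; and the hypothesis that $f_1$ has no invariant graph in $\Gamma$ can be reformulated as finite-time escape of $\Gamma$ from $[0,1]$, which is $\mathcal C^0$-open. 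Together with the finite-scale contraction/expansion pattern, all of this survives small $\mathcal C^2$-perturbations, so one need not even track the precise location of $\beta_c$ along the perturbed family; this is the openness.

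To realise the criterion I would build the fibre maps $f_{\beta,\theta}$ by an inductive multiscale construction adapted to the Diophantine rotation $\omega$, in the spirit of the known constructions of strange non-chaotic attractors via alternating \emph{good} and \emph{bad} regions. At the $n$-th scale one fixes a narrow critical window in the fibre and splits $\T^d$, using the Diophantine bound $d(k\omega,0)\ge\mathscr C|k|^{-\eta}$ (equivalently, Denjoy--Koksma control of Birkhoff sums along orbits of the rotation and of best-approximation return times), into a large \emph{good} set on which the fibre maps contract sharply towards the bottom of $\Gamma$ and a sparse \emph{bad} set of near-resonant phases on which orbits are driven upward. Propagating the estimates from one scale to the next, one shows that at $\beta_c$ the attracting bounding graph $\hat\phi^+_{\beta_c}$ returns arbitrarily close to $0$ along a residual set of phases (the pinched points) while remaining bounded away from $\hat\phi^-_{\beta_c}$ on a set of full measure, and that the average of $\log f'_\theta$ along it stays strictly negative. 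The resulting family then lies in $\mathcal N_\omega$ and, by construction, well inside the open region identified above.

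The main obstacle is precisely this inductive construction: the three requirements --- creating the pinch points (graphs meeting on a residual set), keeping the two graphs separated on a full-measure set, and keeping $\lambda(\hat\phi^+_{\beta_c})<0$ --- pull against one another, since sharp contraction works against the creation of pinch points while the neutral-to-expanding behaviour near them works against negativity of the exponent. Balancing these forces one to control very precisely the error terms generated by the bad regions, and it is here that the Diophantine condition is indispensable: it is what makes these errors summable over the infinitely many scales, so that the scheme closes. A secondary, essentially routine point is to check that the constructed family sits in the \emph{interior} of $\mathcal S_\omega$ --- strict inequalities in (\ref{eq:4}), persistence of the two invariant curves of $f_0$, and finite-time escape from $\Gamma$ under $f_1$ --- so that the neighbourhood furnished by the openness step genuinely consists of families in $\mathcal S_\omega$, and hence in $\mathcal N_\omega$.
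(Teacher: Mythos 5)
This theorem is not proved in the present paper; it is imported from \cite{fuhrmann2013NonsmoothSaddleNodesI}, and what is visible here is only the shape of the argument: an open set $\mathcal{U}_\omega$ in $\mathcal{P}_\omega$ cut out by a finite list of strict $\mathcal{C}^2$-inequalities, a multiscale analysis showing that families in $\mathcal{U}_\omega$ undergo a non-smooth bifurcation, and a verification that the arctan family \eqref{e.atan-example} lies in $\mathcal{U}_\omega$ for large $a$. Your high-level plan --- distil an open criterion, run a multiscale analysis to show it implies $(\mathcal{N})$, and exhibit an example --- is faithful to this structure, and your reduction is logically sound: by the dichotomy of Theorem~\ref{t.saddle-node}, it suffices to rule out $(\mathcal{S})$, and exhibiting an invariant graph with $\lambda<0$ does so.

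The genuine gap is in the openness step, precisely at the point you wave away with ``one need not even track the precise location of $\beta_c$.'' The criterion $\lambda(\hat\phi^+_{\beta_c})<0$ is a condition on the map $f_{\beta_c}$ at the critical parameter, but $\beta_c$ is itself a non-explicit function of $\hat f$ determined by Theorem~\ref{t.saddle-node}, and the Lyapunov exponent of the upper bounding graph is not a priori continuous in the family either (it is defined by an infinite-time limit along a measurable graph). So this criterion is not ``automatically $\mathcal{C}^2$-open'' as written; making it open is the whole technical content. The actual route, reflected in Section~\ref{subsec: basic setting and notation} of the paper, replaces the Lyapunov criterion by finite, explicit a priori estimates on the family --- uniform fibre contraction on a region $C$ and expansion on $E$ (cf.\ \eqref{eq: lipschitz x in C}--\eqref{eq: lipschitz x in E}), the localisation axiom \eqref{axiom: 2} confining the bad behaviour to a single small critical window $\mathcal{I}_0$, and quantitative slow-recurrence conditions $(\mathcal{F})_n$, $(\mathcal{E})_n$ on the recursively defined critical regions. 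These estimates make no reference to $\beta_c$ or to any invariant graph and are strict inequalities, hence manifestly $\mathcal{C}^2$-open; the hard implication, that they force $\lambda(\hat\phi^+_{\beta_c})<0$ at whatever $\beta_c$ the perturbed family has, is then proved via the multiscale induction (where the Diophantine condition enters, as you correctly anticipate, to make the error terms across scales summable). Your proposal identifies the right target but omits the crucial intermediate object: an open, $\beta_c$-free set of conditions on $\hat f$ from which the negativity of $\lambda(\hat\phi^+_{\beta_c})$ follows. A smaller point: you propose to ``build the fibre maps by an inductive multiscale construction,'' whereas the actual proof does not construct the maps; it fixes a concrete two-parameter family like \eqref{e.atan-example} and verifies the open conditions for it. The induction lives entirely in the analysis of the iterated boundary lines, not in the construction of $f$.
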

While this statement may seem rather abstract in the above form, it is
important to note that a much more detailed version is given in
\cite{fuhrmann2013NonsmoothSaddleNodesI}. It states that
$\mathcal{N}_\omega$ contains a $\mathcal{C}^2$-open subset
$\mathcal{U}_\omega$ which is completely characterised by a
list of $\mathcal{C}^2$-estimates on the respective parameter
families. However, since this list consists of 16 different and
sometimes rather technical conditions, we refrain from reproducing it
here. A partially intrinsic characterisation that contains all the
information required for our purposes is given in Section~\ref{subsec:
  basic setting and notation}.  In order to fix ideas, readers may
restrict their attention to the following explicit example which
satisfies all the assumptions of our main result below.
\begin{prop}[\cite{fuhrmann2013NonsmoothSaddleNodesI}]
  Let $\omega\in\T^d$ be Diophantine. Then there exists $a_0>0$ such
  that for all $a>a_0$ the parameter family $\hat
  f\in\mathcal{S}_\omega$ given by 
  \begin{equation}
    \label{e.atan-example}
    f_\beta(\theta,x) \ = \ \left(\theta+\omega,\frac{2}{\pi}\arctan(a x) - 
  \beta (1+\cos(2\pi\theta))    \right)
  \end{equation}
  undergoes a non-smooth saddle-node bifurcation, that is, $\hat
  f\in\mathcal{N}_\omega$.
\end{prop}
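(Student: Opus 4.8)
The strategy is to invoke the more precise version of the preceding theorem established in \cite{fuhrmann2013NonsmoothSaddleNodesI}: that reference does not merely assert that $\mathcal{N}_\omega$ has non-empty $\mathcal{C}^2$-interior, but exhibits an explicit $\mathcal{C}^2$-open set $\mathcal{U}_\omega\subseteq\mathcal{N}_\omega$ which is cut out by a finite list of $\mathcal{C}^2$-estimates on the parameter family. It therefore suffices to check, for $a$ large enough, that the $\arctan$-family (\ref{e.atan-example}) satisfies each of these estimates. As a preliminary, one records the elementary formulas $f'_{\beta,\theta}(x)=\tfrac{2a}{\pi(1+a^2x^2)}$, $f''_{\beta,\theta}(x)=-\tfrac{4a^3x}{\pi(1+a^2x^2)^2}$ (so the fibre derivatives are independent of $\beta$) and $\partial_\beta f_{\beta,\theta}(x)=-(1+\cos 2\pi\theta)$, and notes the two structural features that will drive the argument: the fibre maps form a \emph{steep sigmoid} whose transition region around $x=0$ has width of order $1/a$, whereas the forcing term $\beta(1+\cos 2\pi\theta)$ is independent of $a$, translates the graph monotonically downward in $\beta$, and has a non-degenerate (quadratic) zero at $\theta=\tfrac{1}{2}$.

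Membership in $\mathcal{S}_\omega$ is then routine. Strict monotonicity and concavity of the fibre maps on $[0,1]$ and the downward parameter dependence are immediate from the formulas above; for $\beta=0$ the map is a product whose fibre map $x\mapsto\tfrac{2}{\pi}\arctan(ax)$ has exactly two fixed points in $[0,1]$, namely $0$ and a point $x_a^*\in(0,1)$ close to $1$, giving the two continuous invariant graphs demanded in (\ref{eq:5}); and for $\beta=1$ one checks that $\tfrac{2}{\pi}\arctan(a)<1+\cos 2\pi\theta$ on a non-empty open set $U\subseteq\T^d$, so the monotone fibre map sends the whole fibre over any $\theta\in U$ below $0$, and since $f_{1,\theta}$ maps $(-\infty,0)$ into itself while every rotation orbit meets $U$, no point of $\Gamma$ can survive all forward iterates; hence $\Lambda_1=\emptyset$, which is the remaining part of (\ref{eq:5}).

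The substance of the proof is the verification of the genuinely dynamical estimates in the list --- those controlling the hyperbolicity of the two invariant graphs for parameters near $\beta_c$, the rate at which $\hat\phi_\beta^-$ and $\hat\phi_\beta^+$ approach one another, and the compatibility between the contraction strength, the forcing amplitude and the Diophantine data of $\omega$. Here the steepness of the sigmoid is decisive: for every fixed $\delta>0$ one has $\sup_{|x|\ge\delta}f'_{\beta,\theta}(x)=\tfrac{2a}{\pi(1+a^2\delta^2)}\to 0$ as $a\to\infty$, so large $a$ produces arbitrarily strong contraction away from an arbitrarily thin neighbourhood of $0$, i.e.\ an arbitrarily good separation of scales. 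Every estimate of the shape ``contraction rate below a threshold that depends only on $\omega$ and on the ($a$-independent) forcing'' is thus satisfied once $a$ is large, while the estimates that constrain only the forcing geometry (the amplitude of $1+\cos 2\pi\theta$ and the non-degeneracy of its zero) hold for all $a$; assembling them yields $a_0$ with $\hat f\in\mathcal{U}_\omega\subseteq\mathcal{N}_\omega$ for all $a>a_0$.

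I expect the main obstacle to be that the estimates defining $\mathcal{U}_\omega$ are not transparently monotone in $a$, and that several of them couple the contraction rate, the forcing amplitude and the Diophantine exponent through a single non-trivial inequality; checking these forces one to control the dependence of $\beta_c$ and of the graphs $\hat\phi_\beta^\pm$ on $a$ and on $\beta$ near the bifurcation, which is exactly where the a-priori bounds and the multiscale analysis of \cite{fuhrmann2013NonsmoothSaddleNodesI} are needed. In particular, excluding the smooth alternative $(\mathcal{S})$ in favour of $(\mathcal{N})$ is the delicate point: one must simultaneously (a) produce a residual set of base points whose forward orbits spend long stretches in the strongly contracting part of the phase space, forcing $\hat\phi_{\beta_c}^-=\hat\phi_{\beta_c}^+$ at those points, and (b) exhibit a positive-measure set of base points at which the two graphs stay strictly apart. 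It is precisely this twofold behaviour that the explicit $\mathcal{C}^2$-estimates of $\mathcal{U}_\omega$ are engineered to guarantee, so that once the scale separation coming from large $a$ is in hand, what remains is a direct --- if somewhat lengthy --- verification.
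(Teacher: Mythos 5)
This paper gives no proof of the proposition at all --- it is imported directly from the companion paper, where it follows from the explicit $\mathcal{C}^2$-characterisation of the open set $\mathcal{U}_\omega$. Your sketch takes exactly that route: check membership in $\mathcal{S}_\omega$ by hand, then verify the remaining estimates using the scale separation produced by large $a$. That is indeed the strategy of the reference, and your handling of the conditions (\ref{eq:4})--(\ref{eq:5}) at $\beta=0$ and $\beta=1$ (the two fixed points of the product map, the invariance of $(-\infty,0)$, and the minimality of the base rotation forcing $\Lambda_1=\emptyset$) is accurate.

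One concrete point you should have flagged rather than glossed over: your own formula $f'_{\beta,\theta}(x)=\tfrac{2a}{\pi(1+a^2x^2)}$ is independent of $\beta$, so $\partial_\beta f'_{\beta,\theta}(x)\equiv 0$, which literally contradicts the strict inequality $\partial_\beta f'_{\beta,\theta}(x)<0$ in the list defining $\mathcal{S}_\omega$. What you invoke instead --- ``downward parameter dependence'', i.e.\ $\partial_\beta f_{\beta,\theta}(x)=-(1+\cos 2\pi\theta)\le 0$ --- is a different, weaker condition, which moreover vanishes at $\theta=\tfrac12$. Either the condition is stated here with a typo, or the family appears in slightly different form in the reference; but asserting that membership in $\mathcal{S}_\omega$ is ``routine'' while your own computation exhibits this failure is a gap that a careful verification must resolve. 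Beyond that, you are right that the substantive work --- showing the critical map satisfies the recurrence conditions $(\mathcal{F})_n$ and $(\mathcal{E})_n$ for all $n$ --- is precisely the multiscale analysis of the cited paper and cannot be abbreviated, as you acknowledge.
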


Our main result now provides information on the geometric and
topological structure of the SNA and the associated ergodic measure
occurring in such non-smooth saddle-node bifurcations. Note that to
each invariant graph $\phi$ an invariant ergodic measure $\mu_\phi$
can be associated by defining
\begin{align*}
\mu_\phi(A)=\Leb_{\T^d}\left(\pi_{\T^d}\left(\Phi\cap A\right)\right),
\end{align*}
where $A\subseteq\T^d\times\X$ is Borel measurable and $\pi_{\T^d}$ is
the canonical projection onto $\T^d$. We denote the box-counting dimension of a
set $A\ssq\T^d\times X$ by $\textrm{D}_B(A)$ and its Hausdorff
dimension by $\textrm{D}_H(A)$. For the explanation of further
dimension-theoretical notions, see Sections~\ref{sec: hausdorff and
  box-counting Dimension} and~\ref{sec: exact dimensional and
  rectifiable measures}.
\begin{thm}\label{t.main}
  Let $\omega\in\T^d$ be Diophantine. Then there exists a set
  $\widehat{\mathcal{U}}_\omega\ssq\mathcal{N}_\omega$ with non-empty
  $\mathcal{C}^2$-interior such that for all $\hat
  f\in\widehat{\mathcal{U}}_\omega$ the SNA $\hat\phi^+_{\beta_c}$ appearing at
  the critical bifurcation parameter satisfies the following.
\romanlist
\item 
  $\textrm{D}_B\left(\hat\Phi^+_{\beta_c}\right)=d+1$ and
  $\textrm{D}_H\left(\hat\Phi^+_{\beta_c}\right)=d$.
\item The measure $\mu_{\phi^+_{\beta_c}}$ is exact dimensional with
  pointwise dimension and information dimension equal to $d$. 
\item The set
  $\Lambda_{\beta_c}=\left[\hat\phi^-_{\beta_c},\hat\phi^+_{\beta_c}\right]$
  is minimal and we have
  $\Lambda_{\beta_c}=\textrm{cl}\left(\hat\Phi^-_{\beta_c}\right)=
  \textrm{cl}\left(\hat\Phi^+_{\beta_c}\right)$.
\item The graph $\hat \phi^+_{\beta_c}$ is the only semi-continuous
  representative in the equivalence class $\phi^+_{\beta_c}$. \listend
  Analogous results hold for the repeller $\phi^-_{\beta_c}$.
  Moreover, for all sufficiently large $a>0$, the parameter
  family $\hat f$ given by (\ref{e.atan-example}) is contained in
  $\widehat{\mathcal{U}}_\omega$.
\end{thm}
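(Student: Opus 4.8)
The plan is to follow the strategy of \cite{jaeger:2007,GroegerJaeger2012SNADimensions} and realise the attractor as a limit of \emph{continuous curves}, reducing all four assertions to a sufficiently precise scale-by-scale description of the geometry of these curves, which is then extracted from the multiscale analysis of \cite{fuhrmann2013NonsmoothSaddleNodesI}. Since $f_{\beta_c,\theta}(1)\leq 1$ and the fibre maps are monotone, the iterates $\gamma_n=f_{\beta_c}^n(\gamma_0)$ of the constant curve $\gamma_0\equiv 1$ form a decreasing sequence of continuous graphs with $\gamma_n\geq\gamma_{n+1}\geq\hat\phi^+_{\beta_c}$, and because $\lambda(\hat\phi^+_{\beta_c})<0$ they converge pointwise to $\hat\phi^+_{\beta_c}$ (using invertibility of $f_{\beta_c}$ to see that the limit graph lies in $\Lambda_{\beta_c}$); iterating the lower boundary curve gives an increasing sequence converging to $\hat\phi^-_{\beta_c}$. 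Together with the identity $\Lambda_{\beta_c}=[\hat\phi^-_{\beta_c},\hat\phi^+_{\beta_c}]$ this reduces (i) and (iii) to controlling where the $\gamma_n$ run. The crucial input I would isolate from \cite{fuhrmann2013NonsmoothSaddleNodesI} is a renormalisation statement: there are times $n_k\to\infty$ and scales $\delta_k\to 0$ such that, up to bounded distortion, $\gamma_{n_k}$ restricted to suitable $\delta_k$-pieces of the base is an affinely rescaled copy of the whole picture, and $\gamma_{n_k}$ is $\delta_k$-close to $\Lambda_{\beta_c}$.

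For (i), the bound $\textrm{D}_B(\hat\Phi^+_{\beta_c})\leq d+1$ is trivial since $\hat\Phi^+_{\beta_c}\subseteq\T^d\times[0,1]$, and $\textrm{D}_H(\hat\Phi^+_{\beta_c})\geq d$ holds because $\pi_{\T^d}$ restricts to a Lipschitz surjection $\hat\Phi^+_{\beta_c}\to\T^d$. The content is the \emph{filling estimate} $\textrm{D}_B(\Lambda_{\beta_c})\geq d+1$ and the \emph{thinness estimate} $\textrm{D}_H(\hat\Phi^+_{\beta_c})\leq d$. For the former I would use the renormalisation input to show that, along the $\delta_k$, a proportion bounded below uniformly in $k$ of the $\delta_k$-balls $Q$ tiling $\T^d$ carries a column on which the vertical extent of $\Lambda_{\beta_c}\cap(Q\times[0,1])$, i.e.\ $\sup_Q\hat\phi^+_{\beta_c}-\inf_Q\hat\phi^-_{\beta_c}$, is bounded below by a constant; hence $\Lambda_{\beta_c}$ needs $\gtrsim\delta_k^{-d}\cdot\delta_k^{-1}$ boxes of size $\delta_k$. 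For the thinness estimate the same structure is used in the opposite direction: for each $k$ one builds an economical cover of $\hat\Phi^+_{\beta_c}$ by peeling off the part over ``good'' columns — on which the distortion control yields a quantitative modulus of continuity, so that piece is covered by $\asymp\delta_k^{-d}$ sets of diameter $\delta_k$ — and treating the part over the ``bad'' columns by passing to the next renormalisation level, where by self-similarity the dichotomy recurs at a finer scale; iterating and summing the resulting $s$-sums gives $\mathcal H^s(\hat\Phi^+_{\beta_c})<\infty$ for every $s>d$. This filling/thinness dichotomy is the main difficulty: one must establish that the oscillations of the $\gamma_n$ are genuinely two-dimensional — $\delta_k$-dense in a definite proportion of the $\delta_k$-columns over the base — at infinitely many scales simultaneously, which is precisely what separates $\textrm{D}_B=d+1$ from $\textrm{D}_H=d$, and this is the step where the explicit $\mathcal C^2$-estimates defining $\mathcal U_\omega$ (rather than the qualitative conclusion of Theorem~\ref{t.saddle-node}) are indispensable; most of the technical work consists in propagating these estimates through the renormalisation scheme with uniform distortion control.

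For (ii) I would use that $\mu_{\phi^+_{\beta_c}}$ is the pushforward of $\Leb_{\T^d}$ under $\theta\mapsto(\theta,\hat\phi^+_{\beta_c}(\theta))$, so that for points on the graph $\mu_{\phi^+_{\beta_c}}(B_r(\theta,\hat\phi^+_{\beta_c}(\theta)))=\Leb_{\T^d}\{\theta'\in B_r(\theta):|\hat\phi^+_{\beta_c}(\theta')-\hat\phi^+_{\beta_c}(\theta)|<r\}$. The bound $\leq Cr^d$ is automatic and forces the lower pointwise dimension to be $\geq d$ everywhere; the matching lower bound $\geq r^{d+o(1)}$ for $\Leb_{\T^d}$-almost every $\theta$ comes again from the renormalisation control together with the ergodicity of the base rotation, via a Borel--Cantelli argument along the scales $\delta_k$ showing that for a.e.\ $\theta$ the fibre increment stays $<r$ for a non-negligible proportion of $\theta'\in B_r(\theta)$. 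This yields exact dimensionality with pointwise dimension $d$, and since for an exact-dimensional measure the information dimension coincides with the pointwise dimension, the information dimension equals $d$ as well.

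For (iii), minimality of $\Lambda_{\beta_c}$ follows from the fact that $(\Lambda_{\beta_c},f_{\beta_c})$ is an almost automorphic extension of the minimal base rotation — over the residual pinched set $\Omega$ the fibre $\Lambda_{\beta_c,\theta}=\{\hat\phi^+_{\beta_c}(\theta)\}$ is a singleton — together with the observation, read off from the curve approximation, that the graph over $\Omega$ is dense in $\Lambda_{\beta_c}$: then any non-empty closed invariant subset projects onto $\T^d$, meets a pinched fibre in its unique point, and its dense forward orbit closure already exhausts $\Lambda_{\beta_c}$; this simultaneously gives $\Lambda_{\beta_c}=\textrm{cl}(\hat\Phi^+_{\beta_c})=\textrm{cl}(\hat\Phi^-_{\beta_c})$. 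For (iv), any semi-continuous invariant representative $\psi$ of $\phi^+_{\beta_c}$ has $\{(\theta,\psi(\theta))\}\subseteq\bigcap_{n\in\Z}f_{\beta_c}^n(\Gamma)=\Lambda_{\beta_c}$ by invertibility, hence $\psi\leq\hat\phi^+_{\beta_c}$; since $\psi=\hat\phi^+_{\beta_c}$ almost everywhere, since $\mu_{\phi^+_{\beta_c}}$ has full support in the minimal set $\Lambda_{\beta_c}$, and since $\hat\phi^+_{\beta_c}(\theta)=\sup\Lambda_{\beta_c,\theta}$ is approached along a sequence $\theta_i\to\theta$ in the full-measure set $\{\psi=\hat\phi^+_{\beta_c}\}$, upper semi-continuity of $\psi$ forces $\psi\geq\hat\phi^+_{\beta_c}$ as well, so $\psi=\hat\phi^+_{\beta_c}$. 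The statements for the repeller $\hat\phi^-_{\beta_c}$ follow by applying everything to $f_{\beta_c}^{-1}$, and the claim about the family (\ref{e.atan-example}) is obtained by checking that the finitely many explicit $\mathcal C^2$-estimates cutting out $\widehat{\mathcal U}_\omega$ are implied, for all large $a$, by those already verified for $\mathcal U_\omega$ in \cite{fuhrmann2013NonsmoothSaddleNodesI}.
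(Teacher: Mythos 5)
Your overall strategy --- approximate $\hat\Phi^+_{\beta_c}$ by iterates of $\T^d\times\{1\}$ and control their geometry scale by scale via the multiscale analysis of \cite{fuhrmann2013NonsmoothSaddleNodesI} --- is the right one and matches the paper. But the form of multiscale input you ask for is not what is actually available, and this propagates into a genuine gap. You posit a \emph{renormalisation statement}: times $n_k$ and scales $\delta_k$ such that $\gamma_{n_k}$ restricted to $\delta_k$-pieces is, up to bounded distortion, an affine copy of the whole picture. The analysis in \cite{fuhrmann2013NonsmoothSaddleNodesI} does not produce a self-similar rescaling; what it yields (Proposition~\ref{prop: properties iterated boundary lines}) is that on the ``good'' base sets $\Omega_j^n$ (complements of translates of the critical regions $\I_k$, $k\geq j$) the curve $\phi_n^+$ obeys a Lipschitz bound $L_j$ \emph{uniform in $n$}, together with exponential decay of $|\phi_n^+-\phi_{n-1}^+|$ there. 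That is a weaker, inductive statement obtained by counting contracting vs.\ expanding times (Corollary~\ref{cor: times in C}), not a bounded-distortion self-similarity, and one would have to do substantial extra work to upgrade it to what your peeling argument needs. Once one has it, though, the Hausdorff bound is immediate and cleaner than your cover construction: $\Phi^+|_{\Omega_j}$ is a bi-Lipschitz image of $\Omega_j$, so has dimension $\leq d$; the leftover $\Omega_\infty$ is a $\limsup$ set of Hausdorff dimension $0$; countable stability of $D_H$ closes the argument. Similarly, $\mu_{\phi^+}$ is $d$-rectifiable by the same decomposition, giving (ii) via Corollary~\ref{cor: dimensions rectifiable measure} without any Borel--Cantelli along scales. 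And your ``filling estimate'' for $D_B\geq d+1$ is not needed: once (iii) is established, $\overline{\hat\Phi^+_{\beta_c}}=\Lambda_{\beta_c}$, which has positive $(d{+}1)$-dimensional Lebesgue measure because $\hat\phi^-_{\beta_c}<\hat\phi^+_{\beta_c}$ a.e., and stability of $D_B$ under closure gives the claim for free.

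The second gap sits in your (iii)/(iv). You reduce both to the assertion that the graph of $\hat\phi^+$ over the pinched set $\Omega$ is dense in $\Lambda_{\beta_c}$, and that $\hat\phi^+(\theta)$ is approached along sequences $\theta_i\to\theta$ lying in the full-measure set where $\psi=\hat\phi^+$. Both statements amount precisely to what the paper calls pseudo-continuity, $\phi^+\bigl(\overline{B_r(\theta)}\bigr)\ssq\overline{\phi^+(B_r(\theta))}$ (Corollary~\ref{cor: dist within U and boundary U is zero}), and neither can simply be ``read off'' from the curve approximation. In the paper this is proved by splitting $\theta_0$ into the cases $\theta_0\notin\tilde\Omega_\infty$ and $\theta_0\in\tilde\Omega_\infty$, using Lemma~\ref{lem: theta NOT in tilde Omega infty} and Lemma~\ref{lem: theta in tilde Omega infty} to find nearby points of $\Omega_j$ at quantitatively controlled distance, and then invoking both parts of Proposition~\ref{prop: properties iterated boundary lines}; minimality is then derived by contradiction from pseudo-continuity and the existence of pinched points (Figure~\ref{fig: filled in idea}), rather than by the almost-automorphy heuristic directly. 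Your sketch assumes the density as given, which is exactly the step carrying the technical weight, so as written the argument for minimality and uniqueness is incomplete.
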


Property (iii) has already been considered by M. Herman \cite{herman:1983}. We want to mention that it has been proved previously by
Bjerkl\"ov  for invariant graphs appearing in
quasiperiodic Schr\"odinger cocycles \cite{bjerkloev:2005}, which can be considered a
special case of our setting. Our proof is inspired by that of
Bjerkl\"ov, but puts a stronger focus on the global approximation of
the SNA by iterates of continuous curves. This allows to avoid some
technical complications. The strategy of our proof is outlined at the
beginning of Section~\ref{sec: hausdorff and pointwise dimension}.

We also note that the result on the box-counting dimension is a direct consequence of (iii).
Since the box-counting dimension is stable under taking closures, we have
$\textrm{D}_B\left(\hat\phi^+_{\beta_c}\right) =
\textrm{D}_B(\Lambda_{\beta_c})$. Since the bounding graphs of
$\Lambda_{\beta_c}$ are distinct, this set has positive
$d+1$-dimensional Lebesgue measure and therefore box-counting dimension $d+1$.

\paragraph{Acknowledgements.}
This work was supported by an Emmy-Noether-Grant of the German Research Council
(DFG grant JA 1721/2-1) and is part of the activities of the Scientific Network ``Skew product dynamics and multifractal analysis'' (DFG grant OE 538/3-1).

\section{Preliminaries}

\subsection{Hausdorff and box-counting dimension}\label{sec: hausdorff and box-counting Dimension}
In the following, we recall the definition of the Hausdorff and
box-counting dimension. Further, we state some well known properties
that will be used later on. Suppose $Y$ is a metric space. We denote
the diameter of a subset $A\subseteq Y$ by $|A|$. For $\varepsilon>0$,
we call a finite or countable collection $\{A_i\}$ of subsets of $Y$
an {\em $\varepsilon$-cover} of $A$ if $|A_i|\leq\varepsilon$ for each
$i$ and $A\subseteq\bigcup_i A_i$.
\begin{defn}
For $A\subseteq Y$, $s\geq 0$ and $\varepsilon>0$, we define
\[
\mathcal H_\varepsilon^s(A)\=\inf\left\{\left.\sum\limits_i
    \left|A_i\right|^s \ \right|\ \{A_i\}\text{ is an
    $\varepsilon$-cover of $A$}\right\}
\]
and call
\[
	\mathcal H^s(A)\=\lim\limits_{\varepsilon\to 0} \mathcal H_\varepsilon^s(A)
\]
the \emph{$s$-dimensional Hausdorff measure} of $A$. The
\emph{Hausdorff dimension} of $A$ is defined by
\[
	D_H(A)\=\sup\{s\geq 0 \mid \mathcal H^s(A)=\infty\}.
\]
\end{defn}
The proof of the next lemma is straightforward (cf.\
\cite{GroegerJaeger2012SNADimensions}, for example).
\begin{lem}\label{lem: Hausdorff dimension limsup set}
	Let $A\subseteq Y$ be a $\limsup$ set, meaning that there exists a sequence
	$(A_i)_{i\in\N}$ of subsets of $Y$ with
	\[
        A=\limsup\limits_{i\to\infty}
        A_i\=\bigcap\limits_{i=1}^\infty\bigcup\limits_{k=i}^\infty
        A_k.
	\]
	If $\sum_{i=1}^\infty\left|A_i\right|^s<\infty$ for some
        $s>0$, then $\mathcal H^s(A)=0$ and $D_H(A)\leq s$.
\end{lem}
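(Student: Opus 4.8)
The plan is to prove Lemma~\ref{lem: Hausdorff dimension limsup set} by using the sets $A_k$ for large $k$ directly as an economical cover of $A$. Fix $s>0$ with $\sigma\=\sum_{i=1}^\infty |A_i|^s<\infty$. Since this series converges, its tails vanish: given $\varepsilon>0$ there is $N=N(\varepsilon)$ such that $\sum_{k=N}^\infty |A_k|^s<\varepsilon$. In particular, for every individual $k\geq N$ we have $|A_k|^s<\varepsilon$, hence $|A_k|<\varepsilon^{1/s}$, so the collection $\{A_k\}_{k\geq N}$ consists of sets of diameter at most $\varepsilon^{1/s}$. (One should note the harmless degenerate case: if $|A_k|=0$ for all large $k$ then $A$ is contained in a countable union of singletons, so $\mathcal H^s(A)=0$ trivially; otherwise the above gives genuinely small diameters.)

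The key observation is that, by the definition of the $\limsup$ set, $A=\bigcap_{i\geq 1}\bigcup_{k\geq i} A_k\subseteq \bigcup_{k\geq N} A_k$ for every $N$. Therefore $\{A_k\}_{k\geq N}$ is an $\varepsilon^{1/s}$-cover of $A$, and by the definition of $\mathcal H^s_\delta$ with $\delta=\varepsilon^{1/s}$ we get
\[
  \mathcal H^s_{\varepsilon^{1/s}}(A)\ \leq\ \sum_{k\geq N} |A_k|^s\ <\ \varepsilon.
\]
Letting $\varepsilon\to 0$ (so that $\delta=\varepsilon^{1/s}\to 0$ as well) yields $\mathcal H^s(A)=\lim_{\delta\to 0}\mathcal H^s_\delta(A)=0$.

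Finally, to deduce $D_H(A)\leq s$: the Hausdorff dimension as defined in the excerpt is $D_H(A)=\sup\{t\geq 0\mid \mathcal H^t(A)=\infty\}$. Since $\mathcal H^s(A)=0\neq\infty$, the value $s$ does not lie in the set over which the supremum is taken; moreover the standard fact that $\mathcal H^t(A)=0$ for all $t>s$ (because $\mathcal H^t_\delta(A)\leq \delta^{t-s}\mathcal H^s_\delta(A)$) shows no $t\geq s$ lies in that set either, so the supremum is at most $s$. Hence $D_H(A)\leq s$, as claimed. I do not anticipate a genuine obstacle here — the only point requiring a little care is the bookkeeping between the cover parameter $\delta$ and the tail bound $\varepsilon$, and remembering that the $\limsup$ structure is exactly what guarantees the tails still cover $A$.
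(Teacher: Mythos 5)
Your proof is correct, and it is precisely the standard Borel--Cantelli-type argument that the paper invokes by calling the lemma "straightforward" and deferring to \cite{GroegerJaeger2012SNADimensions} rather than proving it. The tail of the series supplies both the small-diameter cover (each $|A_k|<\varepsilon^{1/s}$ for $k\geq N$) and the small $\mathcal H^s$-mass, and the monotonicity of $\delta\mapsto\mathcal H^s_\delta(A)$ justifies passing to the limit along $\delta=\varepsilon^{1/s}\to 0$; the deduction of $D_H(A)\leq s$ from $\mathcal H^s(A)=0$ via $\mathcal H^t_\delta(A)\leq\delta^{t-s}\mathcal H^s_\delta(A)$ for $t>s$ is likewise standard and correctly handled.
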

\begin{lem}[\cite{Pesin1997}]\label{lemma_Hausdorff_dimension_Lipschitz_image}
  Let $Y$ and $Z$ be two metric spaces and assume that $g:A\subseteq
  Y\to Z$ is a bi-Lipschitz continuous map. Then $D_H(g(A))=D_H(A)$.
\end{lem}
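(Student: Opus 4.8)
The plan is to reduce the statement to the elementary one-sided scaling estimate for Lipschitz maps. By assumption there are constants $0<c_1\le c_2$ with
\[
  c_1\, d_Y(x,y)\ \le\ d_Z(g(x),g(y))\ \le\ c_2\, d_Y(x,y)\qquad\text{for all }x,y\in A;
\]
in particular $g$ is injective, so that $g^{-1}\colon g(A)\ra A$ is well defined and Lipschitz with constant $c_1^{-1}$.

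First I would record the following fact: if $h\colon B\ssq Y\ra Z$ is Lipschitz with constant $L$, then $\mathcal H^s(h(B))\le L^s\,\mathcal H^s(B)$ for every $s\ge 0$. This is immediate from the definitions — given an $\eps$-cover $\{A_i\}$ of $B$, the sets $h(A_i\cap B)$ form an $(L\eps)$-cover of $h(B)$ satisfying $|h(A_i\cap B)|\le L\,|A_i|$, whence $\mathcal H^s_{L\eps}(h(B))\le L^s\sum_i|A_i|^s$; taking the infimum over all $\eps$-covers of $B$ and then letting $\eps\to 0$ gives the claim (the case $L=0$ is trivial, since $h(B)$ is then a single point).

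Next I would apply this estimate twice, once to $g$ on $A$ and once to $g^{-1}$ on $g(A)$, to obtain the two-sided bound
\[
  c_1^{\,s}\,\mathcal H^s(A)\ \le\ \mathcal H^s(g(A))\ \le\ c_2^{\,s}\,\mathcal H^s(A)\qquad\text{for all }s\ge 0.
\]
Since $c_1,c_2>0$, this shows that for every $s\ge 0$ one has $\mathcal H^s(A)=\infty$ if and only if $\mathcal H^s(g(A))=\infty$. Therefore $\{s\ge 0\mid\mathcal H^s(g(A))=\infty\}=\{s\ge 0\mid\mathcal H^s(A)=\infty\}$, and taking suprema yields $D_H(g(A))=D_H(A)$, as claimed.

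There is no genuine obstacle in this argument; the only points that require a moment's care are that the diameters of the image sets are measured in $Z$ — which is precisely where the Lipschitz constant enters the estimate — and that one should intersect each cover set with the domain before applying the map, so that the map is actually defined on it.
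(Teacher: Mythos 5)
Your argument is correct. The paper does not supply a proof of this lemma -- it is cited to Pesin's book -- but your proof is exactly the standard one that appears there: one-sided Lipschitz estimate $\mathcal H^s(h(B))\le L^s\mathcal H^s(B)$, applied to $g$ and to $g^{-1}$ to get the two-sided comparability of Hausdorff measures, and then the observation that this makes the set $\{s\ge 0\mid\mathcal H^s=\infty\}$ the same for $A$ and $g(A)$, so the suprema coincide. The two small points you flagged (measure diameters in the target space; intersect each cover set with the domain before applying the map) are exactly the ones that need care, and you handled them correctly.
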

\begin{lem}[\cite{Pesin1997}]\label{lem: Hausdorff dimension countably stable}
  The Hausdorff dimension is countably stable, i.e.,
  $D_H\left(\bigcup_i A_i\right)=\sup_i D_H(A_i)$ for any sequence of
  subsets $(A_i)_{i\in\N}$ with $A_i\subseteq Y$.
\end{lem}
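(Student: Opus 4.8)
The plan is to derive the identity from three standard properties of the set functions $\mathcal H^s_\varepsilon$ and $\mathcal H^s$, each an immediate consequence of the definitions recalled above. The first is \emph{monotonicity}: if $A\ssq B$, then any $\varepsilon$-cover of $B$ also covers $A$, so $\mathcal H^s_\varepsilon(A)\leq \mathcal H^s_\varepsilon(B)$ for all $s,\varepsilon>0$; passing to the limit $\varepsilon\to 0$ gives $\mathcal H^s(A)\leq \mathcal H^s(B)$, and hence $D_H(A)\leq D_H(B)$. The second is \emph{countable subadditivity} of $\mathcal H^s$: concatenating $\varepsilon$-covers $\{A_{i,k}\}_k$ of the individual sets $A_i$ produces an $\varepsilon$-cover of $\bigcup_i A_i$, so $\mathcal H^s_\varepsilon(\bigcup_i A_i)\leq \sum_i \mathcal H^s(A_i)$, and letting $\varepsilon\to 0$ yields $\mathcal H^s(\bigcup_i A_i)\leq \sum_i \mathcal H^s(A_i)$. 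The third is the \emph{jump property}: if $\mathcal H^s(A)<\infty$ and $t>s$, then $\sum_i|A_i|^t\leq\varepsilon^{t-s}\sum_i|A_i|^s$ for every $\varepsilon$-cover, hence $\mathcal H^t_\varepsilon(A)\leq\varepsilon^{t-s}\mathcal H^s(A)\to 0$, so that $\mathcal H^t(A)=0$; in particular $s>D_H(A)$ forces $\mathcal H^s(A)=0$, by applying this with an intermediate exponent $s'\in(D_H(A),s)$ and noting that $\mathcal H^{s'}(A)\neq\infty$ by the definition of $D_H$.

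Given these, I would argue as follows. The inequality $\sup_i D_H(A_i)\leq D_H(\bigcup_i A_i)$ follows directly from monotonicity, since $A_j\ssq\bigcup_i A_i$ for each $j$. For the reverse, fix any $s>\sup_i D_H(A_i)$; then $\mathcal H^s(A_i)=0$ for every $i$ by the jump property, hence $\mathcal H^s(\bigcup_i A_i)=0$ by countable subadditivity, and a further application of the jump property shows $\mathcal H^t(\bigcup_i A_i)=0$ for all $t\geq s$. Therefore the set of exponents $t\geq 0$ with $\mathcal H^t(\bigcup_i A_i)=\infty$ is contained in $[0,s)$, so $D_H(\bigcup_i A_i)\leq s$; letting $s\downarrow\sup_i D_H(A_i)$ completes the argument.

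I do not anticipate a genuine obstacle here: this is a classical fact, and the only mild subtlety is bookkeeping around the particular form of the definition of $D_H$ used in the paper, namely $\sup\{s\geq 0\mid\mathcal H^s(A)=\infty\}$, which makes the jump property the right device for passing between ``$s$ exceeds the dimension'' and ``$\mathcal H^s$ vanishes.'' Since the whole argument is routine, I would expect the write-up simply to refer to \cite{Pesin1997} rather than reproduce it.
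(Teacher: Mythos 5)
Your proof is correct and is the standard textbook argument for countable stability of the Hausdorff dimension; the paper does not reproduce a proof but simply cites \cite{Pesin1997}, and the three ingredients you isolate (monotonicity, countable subadditivity of $\mathcal H^s$, and the jump property) are exactly the ones used in the classical treatment. The handling of the paper's particular definition $D_H(A)=\sup\{s\geq 0\mid\mathcal H^s(A)=\infty\}$ via an intermediate exponent $s'\in(D_H(A),s)$ is the right move and closes the only bookkeeping gap that could arise.
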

\begin{defn}
  The \emph{lower} and \emph{upper box-counting dimension} of a
  totally bounded subset $A\subseteq Y$ are defined as
\begin{align*}
  \underline D_B(A)\=\liminf\limits_{\varepsilon\to 0}\frac{\log N(A,\varepsilon)}{-\log\varepsilon},\\
  \overline D_B(A)\=\limsup\limits_{\varepsilon\to 0}\frac{\log
    N(A,\varepsilon)}{-\log\varepsilon},
\end{align*}
where $N(A,\varepsilon)$ is the smallest number of sets of diameter at
most $\varepsilon$ needed to cover $A$.  If $\underline
D_B(A)=\overline D_B(A)$, then we call their common value $D_B(A)$ the
\emph{box-counting dimension} (or \emph{capacity}) of $A$.
\end{defn}
\begin{rem}
In contrast to the last lemma, we only have that the upper
box-counting dimension is finitely stable. Further,
$D_B(A)=D_B\left(\overline A\right)$. 
\end{rem}
\begin{thm}[\cite{Howroyd1996}]\label{thm: Hausdorff dimension product sets}
  Suppose $Y$ and $Z$ are two metric spaces and consider the Cartesian
  product space $Y\times Z$ equipped with the maximum metric. Then for
  $A\subseteq Y$ and $B\subseteq Z$ totally bounded, we have
\[
	D_H(A\times B)\leq D_H(A)+\overline D_B(B).
\] 
\end{thm}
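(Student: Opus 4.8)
The plan is to bound the $(s+t)$-dimensional Hausdorff measure of $A\times B$ from above for arbitrary exponents $s>D_H(A)$ and $t>\overline D_B(B)$, and then to let $s\downarrow D_H(A)$ and $t\downarrow\overline D_B(B)$. First I would record the two ingredients. Since $s>D_H(A)$ we have $\mathcal H^s(A)=0$, hence $\mathcal H^s_\varepsilon(A)=0$ for every $\varepsilon>0$; so for each $\varepsilon>0$ and each $\delta>0$ there is an $\varepsilon$-cover $\{A_i\}$ of $A$ with $\sum_i|A_i|^s<\delta$, and (as a routine technicality) we may assume $|A_i|>0$ for all $i$. Since $t>\overline D_B(B)$, the definition of the upper box-counting dimension yields an $\varepsilon_0>0$ with $N(B,r)\le r^{-t}$ for all $0<r\le\varepsilon_0$.

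Next I would assemble a product cover. Fix $\varepsilon\in(0,\varepsilon_0]$ and $\delta>0$ and pick an $\varepsilon$-cover $\{A_i\}$ of $A$ as above; write $r_i=|A_i|\in(0,\varepsilon_0]$. For each $i$, cover $B$ by $N(B,r_i)\le r_i^{-t}$ sets $B_{i,1},\dots,B_{i,N(B,r_i)}$ of diameter at most $r_i$. Then $\{A_i\times B_{i,j}\}_{i,j}$ is a cover of $A\times B$ and, in the maximum metric, $|A_i\times B_{i,j}|=\max(|A_i|,|B_{i,j}|)\le r_i\le\varepsilon$, so it is an $\varepsilon$-cover. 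The decisive estimate is then
\[
  \sum_i\sum_{j=1}^{N(B,r_i)}|A_i\times B_{i,j}|^{s+t}
  \ \le\ \sum_i N(B,r_i)\,r_i^{s+t}
  \ \le\ \sum_i r_i^{-t}\,r_i^{s+t}
  \ =\ \sum_i|A_i|^s\ <\ \delta ,
\]
whence $\mathcal H^{s+t}_\varepsilon(A\times B)\le\delta$. As $\delta>0$ and then $\varepsilon\in(0,\varepsilon_0]$ were arbitrary, $\mathcal H^{s+t}(A\times B)=0$, so $D_H(A\times B)\le s+t$; taking the infimum over admissible $s$ and $t$ proves the claim.

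I do not expect a serious obstacle: the construction is elementary, and the only delicate points are bookkeeping — choosing the mesh of $\{A_i\}$ to be at most $\varepsilon_0$ so that the bound $N(B,r)\le r^{-t}$ applies simultaneously to \emph{all} the scales $r_i$, and discarding cover elements of diameter $0$ (each such $A_i$ is a single point, and the slice $A_i\times B$, being isometric to $B$, can be covered at an arbitrarily fine scale with arbitrarily small $(s+t)$-sum). The conceptually essential feature — and the reason the statement involves $\overline D_B(B)$ rather than $D_H(B)$ — is that at each scale $r_i$ imposed by the cover of $A$ one needs a cover of $B$ \emph{at that very scale} whose cardinality is controlled a priori by $r_i^{-t}$; the Hausdorff dimension of $B$ provides no such uniform, scale-by-scale count, and indeed $D_H(A\times B)\le D_H(A)+D_H(B)$ can fail.
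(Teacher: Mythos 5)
The paper does not prove this inequality; it simply cites Howroyd (1996) and uses it as a black box. Your argument is correct and is in fact the standard elementary proof of this classical estimate (it predates Howroyd and can be found, e.g., in Falconer's \emph{Fractal Geometry}; Howroyd's paper proves finer product formulas involving packing dimension). The chain $\sum_{i,j}|A_i\times B_{i,j}|^{s+t}\le\sum_i N(B,r_i)r_i^{s+t}\le\sum_i r_i^{-t}r_i^{s+t}=\sum_i|A_i|^s<\delta$ is exactly right under the max metric, and you correctly identify the one place where the hypothesis on the box dimension, rather than the Hausdorff dimension, of $B$ enters: you need a uniformly controlled count of sets of $B$ at \emph{every} scale $r_i$ dictated by the cover of $A$, which Hausdorff measure alone does not supply. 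Your bookkeeping remarks (restricting to $\varepsilon\le\varepsilon_0$ so that $N(B,r)\le r^{-t}$ applies to all $r_i$, and treating singleton $A_i$'s separately via a slice $\{a\}\times B$ covered at a scale $\rho$ with contribution $\rho^{-t}\rho^{s+t}=\rho^s$, split over a geometric budget $\delta 2^{-i}$) close the remaining gaps. Nothing to fix.
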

\subsection{Exact dimensional and rectifiable measures}\label{sec: exact dimensional and rectifiable measures}
We recall the notions of pointwise and information dimension as well as exact dimensional
measures. Further, we provide the definition and some properties of
rectifiable measures where we mainly follow
\cite{AmbrosioKirchheim2000}.

Again, let $Y$ be a metric space. For $x\in Y$, $\varepsilon>0$ let
$B_\varepsilon(x)$ be the open ball around $x$ with radius
$\varepsilon>0$.
\begin{defn}
  Suppose $\mu$ is a finite Borel measure in $Y$. For each point $x$
  in the support of $\mu$ we define the \emph{lower} and \emph{upper
    pointwise dimension} of $\mu$ at $x$ as
\begin{align*}
  \underline d_\mu(x)\=\liminf\limits_{\varepsilon\to 0}\frac{\log\mu(B_\varepsilon(x))}{\log\varepsilon},\\
  \overline d_\mu(x)\=\limsup\limits_{\varepsilon\to
    0}\frac{\log\mu(B_\varepsilon(x))}{\log\varepsilon}.
\end{align*}
If $\underline d_\mu(x)=\overline d_\mu(x)$, then their common value
$d_\mu(x)$ is called the \emph{pointwise dimension} of $\mu$ at $x$.
The \emph{information dimension} of $\mu$ is defined as
\begin{equation*}
  \lim\limits_{\varepsilon\to 0} \frac{\int \log\mu(B_\eps(x))\ d\mu(x)}{\log \eps} , 
\end{equation*}
provided the limit exists. Otherwise, one again defines upper and
lower information dimension via the limit superior and inferior,
respectively.
\end{defn}
\begin{defn}
  We say that the measure $\mu$ is \emph{exact dimensional} if the
  pointwise dimension exists and is constant almost everywhere, i.e.,
  we have
\[
	\underline d_\mu(x)=\overline d_\mu(x)\eqqcolon d_\mu
\]
 $\mu$-almost everywhere.
\end{defn}
\begin{rem}
Note that if $\mu$ is exact dimensional, then in the setting of
separable metric spaces several other dimensions of $\mu$ coincide
with the pointwise dimension \cite{Zindulka2002}. In
particular, this is true for the information dimension
\cite{Young1982Dimension,Pesin1993}.
\end{rem}
\begin{defn}
  For $d\in\N$, we call a Borel set $A\subseteq Y$ \emph{countably
    $d$-rectifiable} if there exists a sequence of Lipschitz
  continuous functions $(g_i)_{i\in\N}$ with $g_i:A_i\subseteq\R^d\to
  Y$ such that $\mathcal H^d(A\backslash\bigcup_i g_i(A_i))=0$. A
  finite Borel measure $\mu$ is called \emph{$d$-rectifiable} if
  $\mu=\Theta\left.\mathcal H^d\right|_A$ for some countably
  $d$-rectifiable set $A$ and some Borel measurable density
  $\Theta:A\to[0,\infty)$.
\end{defn}
Observe that, by the Radon-Nikodym theorem, $\mu$ is $d$-rectifiable
if and only if $\mu$ is absolutely continuous with respect to
$\left.\mathcal H^d\right|_A$ where $A$ is a countably $d$-rectifiable
set.
\begin{thm}[{\cite[Theorem 5.4]{AmbrosioKirchheim2000}}]
  For a $d$-rectifiable measure $\mu=\Theta\left.\mathcal
    H^d\right|_A$, we have
\[
\Theta(x)=\lim\limits_{\varepsilon\to
  0}\frac{\mu(B_\varepsilon(x))}{V_d\varepsilon^d},
\]
for $\mathcal H^d$-a.e.\ $x\in A$, where $V_d$ is the volume of the
$d$-dimensional unit ball. The right-hand side of this equation is
called the $d$-density of $\mu$.
\end{thm}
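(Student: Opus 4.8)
The plan is to split the assertion into a geometric statement about the restricted measure $\nu = \left.\mathcal H^d\right|_A$ and an integration-theoretic statement about the density $\Theta$. Whenever $\nu(B_\varepsilon(x))>0$ one has the elementary identity
\[
\frac{\mu(B_\varepsilon(x))}{V_d\varepsilon^d}
=\frac{\nu(B_\varepsilon(x))}{V_d\varepsilon^d}\cdot
\frac{1}{\nu(B_\varepsilon(x))}\int_{B_\varepsilon(x)}\Theta\,d\nu ,
\]
(and the remaining case is trivial, since then $\mu(B_\varepsilon(x))=0$). Hence it suffices to prove (i) $\nu(B_\varepsilon(x))/(V_d\varepsilon^d)\to 1$ for $\mathcal H^d$-a.e.\ $x\in A$, and (ii) $\frac{1}{\nu(B_\varepsilon(x))}\int_{B_\varepsilon(x)}\Theta\,d\nu\to\Theta(x)$ for $\mathcal H^d$-a.e.\ $x\in A$. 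Statement (ii) is a Lebesgue-differentiation property that will be deduced from (i), so the core of the argument is (i).

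For (i), I would invoke the structure theory of rectifiable sets to refine the definition of countable $d$-rectifiability: one can write $A=N\cup\bigcup_{i}g_i(K_i)$ as a \emph{disjoint} union, where $\mathcal H^d(N)=0$, each $K_i\subseteq\R^d$ is compact, and each $g_i\colon K_i\to Y$ is bi-Lipschitz onto its image. Then $\nu(B_\varepsilon(x))=\sum_i\mathcal H^d\bigl(g_i(K_i)\cap B_\varepsilon(x)\bigr)$, and for a fixed point $x\in g_{i_0}(K_{i_0})$ one treats the term $i=i_0$ (the ``diagonal'') and the terms $i\neq i_0$ (the ``cross terms'') separately. For the diagonal term I would use Kirchheim's theorem on metric differentiability together with the associated metric area formula: for $\mathcal L^d$-a.e.\ $y_0\in K_{i_0}$, hence for $\mathcal H^d$-a.e.\ $x=g_{i_0}(y_0)$, the map $g_{i_0}$ is metrically differentiable at $y_0$ with metric differential a genuine norm $\|\cdot\|_{y_0}$ on $\R^d$, the point $y_0$ is a Lebesgue density point of $K_{i_0}$, and $\left.\mathcal H^d\right|_{g_{i_0}(K_{i_0})}=(g_{i_0})_*\bigl(\mathbf J(\mathrm{md}\,g_{i_0})\,\left.\mathcal L^d\right|_{K_{i_0}}\bigr)$ with $\mathbf J(\|\cdot\|)=V_d/\mathcal L^d(\{v:\|v\|\le 1\})$. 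Metric differentiability yields $g_{i_0}^{-1}(B_\varepsilon(x))=\bigl(y_0+\varepsilon\,U_{y_0}+o(\varepsilon)\bigr)\cap K_{i_0}$ with $U_{y_0}=\{v:\|v\|_{y_0}<1\}$, and combining this with the density-point property and the area formula gives
\[
\mathcal H^d\bigl(g_{i_0}(K_{i_0})\cap B_\varepsilon(x)\bigr)
=\mathbf J(\|\cdot\|_{y_0})\,\mathcal L^d(U_{y_0})\,\varepsilon^d+o(\varepsilon^d)
=V_d\varepsilon^d+o(\varepsilon^d),
\]
the last equality being precisely the normalisation built into $\mathbf J$.

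For the cross terms I would invoke the standard density estimate that a set $S$ with $\mathcal H^d(S)<\infty$ has vanishing upper $d$-density at $\mathcal H^d$-a.e.\ point of $Y\setminus S$; applied to $S=A\setminus g_{i_0}(K_{i_0})$ (which has finite $\mathcal H^d$-measure and is disjoint from $g_{i_0}(K_{i_0})$) this yields $\mathcal H^d\bigl((A\setminus g_{i_0}(K_{i_0}))\cap B_\varepsilon(x)\bigr)=o(\varepsilon^d)$ for $\mathcal H^d$-a.e.\ $x\in g_{i_0}(K_{i_0})$. That estimate is proved by a routine $5r$-covering (Vitali) argument, which is available in an arbitrary metric space since only an upper bound on the Hausdorff measure of the exceptional set is needed. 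Summing the diagonal and cross contributions and discarding the null set $N$ establishes (i). The same two estimates also give $\nu(B_{2\varepsilon}(x))\le(2^d+o(1))\,\nu(B_\varepsilon(x))$, so $\nu$ is asymptotically doubling; hence the Lebesgue differentiation theorem applies to $\Theta\in L^1(\nu)$ and yields (ii), and therefore the claim.

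The main obstacle is the diagonal term, i.e.\ the metric differentiability and the metric area formula, which form the genuinely technical ingredient and are exactly where the normalisation of $\mathcal H^d$ and the Euclidean constant $V_d$ are matched. For the situation actually needed in this paper, where $Y=\T^d\times X$ is a smooth manifold, this step simplifies considerably: one may replace it by the elementary fact that a $d$-dimensional $\mathcal C^1$-submanifold has $\mathcal H^d$-density $1$ at each of its points together with the classical area formula, while the decomposition of $A$ becomes the classical splitting of a rectifiable set into countably many $\mathcal C^1$ pieces.
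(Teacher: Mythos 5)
The paper does not prove this statement at all -- it is imported verbatim from Ambrosio--Kirchheim \cite{AmbrosioKirchheim2000}, so the only meaningful comparison is with that source, and your sketch does essentially reconstruct its argument: decomposition of $A$ into bi-Lipschitz images of compact subsets of $\R^d$, Kirchheim's metric differentiability plus the metric area formula for the ``diagonal'' piece, a $5r$-covering density estimate for the remaining pieces, and a differentiation argument for $\Theta$. Those ingredients are the right ones, and the way you match the constant $V_d$ through the Jacobian of the metric differential is correct, provided one uses the normalised Hausdorff measure (with the factor $V_d/2^d$ in its definition, as in \cite{AmbrosioKirchheim2000}); note that the paper's own definition in Section 2.1 omits this factor, which is harmless for the use made of the theorem here but matters for the literal identity you are proving.

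There is, however, one genuine gap in your reduction. You factor the density of $\mu$ through claim (i), that $\nu=\left.\mathcal H^d\right|_A$ has density $1$ at $\mathcal H^d$-a.e.\ point of $A$, and in the cross-term step you assert that $A\setminus g_{i_0}(K_{i_0})$ has finite $\mathcal H^d$-measure. But the definition only makes $\mu=\Theta\left.\mathcal H^d\right|_A$ finite; $\left.\mathcal H^d\right|_A$ need not be finite, nor even locally finite. Take $Y=\R^2$, $d=1$, and let $A$ be the union of a segment on the $x$-axis with segments at all rational heights $q_n\neq 0$, with $\Theta=2^{-n}$ on the $n$-th horizontal segment and $\Theta=1$ on the axis: then $\mu$ is a finite $1$-rectifiable measure in the paper's sense, yet every ball centred on the axis has infinite $\nu$-measure, so claim (i) fails on a set of positive $\mathcal H^1$-measure and your elementary identity is vacuous there (the overlooked case is $\nu(B_\varepsilon(x))=\infty$, not $\nu(B_\varepsilon(x))=0$), even though the theorem's conclusion still holds at those points. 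The repair is standard and preserves your structure: localise to $A_k=\{x\in A:\Theta(x)>1/k\}$, which satisfies $\mathcal H^d(A_k)\leq k\,\mu(Y)<\infty$, run your diagonal/cross-term argument for $\left.\mathcal H^d\right|_{A_k}$ and for $\Theta$ restricted to $A_k$, and control the leftover contribution $\mu\bigl(B_\varepsilon(x)\cap(A\setminus A_k)\bigr)$ at $\mathcal H^d$-a.e.\ $x\in A_k$ by applying the same covering estimate to the \emph{finite} Borel measure $\mu$ restricted to $A\setminus A_k$ (using its outer regularity and the fact that it vanishes on $A_k$); the points where $\Theta=0$ are treated in the same way with $\lambda=\mu$, since $\mu(\{\Theta=0\})=0$. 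With this localisation your argument becomes correct and is, in substance, the proof in the cited reference.
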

From the last theorem, we can deduce that the $d$-density exists and
is positive $\mu$-almost everywhere for a $d$-rectifiable measure
$\mu$. This directly implies the next corollary.
\begin{cor}\label{cor: dimensions rectifiable measure}
  A $d$-rectifiable measure $\mu$ is exact dimensional with $d_\mu=d$.
\end{cor}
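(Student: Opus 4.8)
The plan is to derive the pointwise dimension of a $d$-rectifiable measure $\mu = \Theta\left.\mathcal H^d\right|_A$ directly from the preceding theorem on the $d$-density. First I would recall the conclusion drawn in the paragraph just above the corollary: by \cite[Theorem 5.4]{AmbrosioKirchheim2000}, the limit $\lim_{\varepsilon\to 0}\mu(B_\varepsilon(x))/(V_d\varepsilon^d)$ exists and equals $\Theta(x)$ for $\mathcal H^d$-a.e.\ $x\in A$, and moreover this density is positive $\mu$-almost everywhere. (Positivity $\mu$-a.e.\ is immediate: the set $\{\Theta = 0\}$ is $\mu$-null by the definition $\mu = \Theta\left.\mathcal H^d\right|_A$, since $\mu(\{\Theta=0\}) = \int_{\{\Theta=0\}}\Theta\, d\mathcal H^d = 0$.)

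Next, fix a point $x\in A$ at which both the density limit exists and $0 < \Theta(x) < \infty$; the set of such $x$ is of full $\mu$-measure. Then
\[
\frac{\log\mu(B_\varepsilon(x))}{\log\varepsilon}
= \frac{\log\bigl(\mu(B_\varepsilon(x))/(V_d\varepsilon^d)\bigr) + \log V_d + d\log\varepsilon}{\log\varepsilon}.
\]
As $\varepsilon\to 0$, the numerator's first term tends to $\log\Theta(x)$, a finite constant, and $\log V_d$ is constant, so dividing by $\log\varepsilon\to-\infty$ kills those two contributions and leaves the limit $d$. Hence $\underline d_\mu(x) = \overline d_\mu(x) = d$ at every such $x$, which is to say $\mu$-almost everywhere. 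By the definition of exact dimensionality, $\mu$ is exact dimensional with $d_\mu = d$.

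There is no real obstacle here; the work has already been done in the quoted theorem of Ambrosio–Kirchheim, and the corollary is the routine passage from "the $d$-density exists and is finite and positive" to "the pointwise dimension equals $d$". The only point requiring a word of care is that the density statement holds $\mathcal H^d$-a.e.\ on $A$ while exact dimensionality is a $\mu$-a.e.\ statement; but since $\mu\ll\left.\mathcal H^d\right|_A$, an $\mathcal H^d$-a.e.\ property on $A$ is in particular a $\mu$-a.e.\ property, so the two are compatible and the conclusion follows.
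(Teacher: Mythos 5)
Your proof is correct and follows essentially the same route the paper intends: the paper simply remarks that the existence and positivity $\mu$-a.e.\ of the $d$-density, read off from the Ambrosio--Kirchheim theorem, ``directly implies'' the corollary, and your argument spells out exactly that routine passage (taking logarithms of $\mu(B_\varepsilon(x)) \sim \Theta(x)V_d\varepsilon^d$ and using $\mu\ll\left.\mathcal H^d\right|_A$ to upgrade the $\mathcal H^d$-a.e.\ statement to a $\mu$-a.e.\ one).
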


\subsection{Definition of the set \texorpdfstring{$\widehat{\mathcal{U}}_\omega$}{\widehat{\mathcal{U}}_\omega}}\label{subsec: basic setting and notation}

The aim of this section is to define the set $\widehat{\mathcal{U}}_\omega$ in
Theorem~\ref{t.main}. In principle, it would be possible to work
directly with the set $\mathcal{U}_\omega$ mentioned after
Theorem~\ref{t.saddle-node}, which can be defined in terms of the
explicit $\mathcal{C}^2$-estimates used in
\cite{fuhrmann2013NonsmoothSaddleNodesI}. However, as mentioned we
want to avoid reproducing the somewhat technical characterisation. At
the same time, we have to state a number of facts concerning the
dynamics of the considered parameter families at the bifurcation, which are derived by
means of the multiscale analysis carried out in
\cite{fuhrmann2013NonsmoothSaddleNodesI}.

Hence, what we actually do is to omit all those estimates from
\cite{fuhrmann2013NonsmoothSaddleNodesI} which are only needed to
prove the desired dynamical properties--namely certain slow
recurrence conditions for certain critical sets defined in the
multiscale analysis. Instead, we define $\widehat{\mathcal{U}}_\omega$ as the set
of parameter families which satisfy those
$\mathcal{C}^2$-estimates that are still needed for our purposes and
at the same time show the required dynamical behaviour. This means
that $\widehat{\mathcal{U}}_\omega$ will be defined in a partially intrinsic and
somewhat abstract way. However, the important fact is that it
has non-empty $\mathcal{C}^2$-interior (see Proposition~\ref{prop: hat U w has non-empty interior}) and contains the example \eqref{e.atan-example} for large $a$.
 
In the following, let $f\in \mc F_\w$ be given.
We assume the existence of both an \emph{interval of contraction}
$C=[c,1]\ssq \X$ and \emph{expansion} $E=[0,e]\ssq \X$ where $0<e<c<1$
(the naming becomes clear below) and a closed convex region $\mc I_{0}
\ssq \T^d$, called the \emph{(first) critical region}, such that
\begin{align}
  f_{\theta}\left(x\right)\in C \text{ for all } x\in [e,1] \text{
    and }\theta \notin \I_{0}.
\label{axiom: 2}
\end{align}
Further, we suppose there are $\alpha>1$, $p\geq\sqrt2$ and $S>0$ such that
for arbitrary $\theta,\theta' \in \T^d$ we have
\begin{align}
  \alpha^{-p}|x-x'|\leq |f_\theta(x)-f_{\theta}(x')|&\leq \alpha^{p}
  |x-x'|\ \text{ for all } x,x'\in \X, \label{eq: lipschitz x}\\
  |f_\theta(x)-f_{\theta'}(x)|&\leq S d(\theta,\theta') 
 \text{ for all } x\in \X, \label{eq: lipschitz theta}\\
  |f_\theta(x)-f_{\theta}(x')|&\leq \alpha^{-2/p}|x-x'|\ 
  \label{eq: lipschitz x in C} \text{ for all } x,x'\in C, \\
  |f_\theta(x)-f_{\theta}(x')|&\geq \alpha^{2/p}|x-x'|\
  \label{eq: lipschitz x in E} \text{ for all } x,x'\in E.
\end{align}
These are the explicit estimates needed to define
$\widehat{\mathcal{U}}_\omega$. In order to state the required
dynamical properties, let $K_n=K_0 \kappa^n$ for some integers $\kappa\geq2, \ K_0\in \N$.
Set
\begin{align*}
 b_0\=1, \qquad b_n\=(1-1/K_{n-1})b_{n-1} \ (n\in \N)
\end{align*}
and $b\=\lim_{n\to \infty} b_n$ and assume $K_0$ and $\kappa$ are big enough to ensure that
$b>\sqrt{(p^2+1)/(p^2+2)}$.
Further, let $\left(M_n\right)_{n\in\N_0}$ be a sequence of
integers that satisfies $M_{n}\in[K_{n-1}M_{n-1},2K_{n-1}M_{n-1}-2]$ for
all $n\in \N$, where $M_0\geq 2$.
\begin{defn}
  For $n\in \N_0$, we recursively define the $n+1$-th \emph{critical
    region} $\mc I_{n+1}$ in the following way:
\begin{itemize}
 \item $\mc A_{n} \=\left (\mc I_{n} - (M_n-1)\w\right)\times C$,
 \item $\mc B_{n} \=\left(\mc I_{n} + (M_n+1)\w\right)\times E$,
 \item $\mc I_{n+1}\= \pi_{\T^d} \left(f^{M_n-1} (\A_{n})\cap f^{-(M_n+1)}(\B_{n})\right)$.
\end{itemize}
Note that we trivially have $\I_{n+1}\ssq\I_n$.
For $n\in \N_0$, set $\mc Z^-_n\= \bigcup_{j=0}^{n}
\bigcup_{l=-(M_j-2)}^{0}\mc I_j+l\w$; $\mc Z^+_n\= \bigcup_{j=0}^{n}
\bigcup_{l=1}^{M_j}\mc I_j+l\w$; $\mc V_n \= \bigcup_{j=0}^{n}
\bigcup_{l=1}^{M_j+1}\mc I_j+l\w$; $\mc W_n \= \bigcup_{j=0}^{n}
\bigcup_{l=-(M_j-1)}^{0}\mc I_j+l\w$. Moreover, set $\mc V_{-1},\mc W_{-1}=\emptyset$.
\end{defn}

\begin{defn}\label{def: recurrence properties of critical intervals}
  Let $n\in \N_0$. For $c_0>0$, set $\eps_n \= c_0 \alpha^{-M_{n-1}\cdot b/(2p)}$, where we put $M_{-1}=0$ for convenience. We say $f$
  verifies $(\mc F1)_n$ and $(\mc F2)_n$, respectively if $\I_j\neq \emptyset$ and
\begin{enumerate}[$(\mc F1)_n$]
\item $d\left(\mc I_{j}, \bigcup_{k=1}^{2K_jM_j} \mc I_{j}
    +k\w\right)>\eps_j$, \label{axiom: diophantine 1}
 \item $\left(\mc I_{j}- (M_j-1)\w \cup \mc I_{j}+ (M_j+1)\w\right) \bigcap
\left ( \mc V_{j-1} \cup \mc W_{j-1}\right)
=\emptyset$ \label{axiom: diophantine 2}
\end{enumerate}
for $j=0,\ldots,n$ and $n\in \N_0$. If $f$ satisfies both
$(\mc F1)_n$ and $(\mc F2)_n$, we say $f$ satisfies
$(\mc F)_n$. Further, we say $f$ satisfies $(\mathcal{E})_n$ if
\begin{enumerate}[$(\mathcal{E})_n$]
\item\quad  $|\I_n|\ < \eps_n$, \label{axiom: I_n}
\end{enumerate}
where $|\I_n|$ denotes the diameter of $\I_n\ssq\T^d$.
\end{defn}

In the following, we say $f$ \emph{satisfies
(\ref{axiom: 2})--(\ref{eq: lipschitz x in E}), $(\mathcal{F})_n$ and $(\mathcal{E})_n$} if it verifies the respective assumptions for some choice of the above constants.
With these notions, we are now in the position to define the set
$\widehat{\mathcal{U}}_\omega$.
\begin{defn}
For $\omega\in\T^d$,
set
  \begin{equation*}
    \widehat{\mathcal{U}}_\omega \ = \ \left\{ \hat f\in\mathcal{S}_\omega\mid 
     f_{\beta_c} \textrm{ satisfies (\ref{axiom: 2})--(\ref{eq: lipschitz x in E}), } 
   (\mathcal{F})_n \textrm{ and } (\mathcal{E})_n \textrm{ for all } n\in\N \right\} .
  \end{equation*}
\end{defn}

The following result is now contained implicitly in
\cite{fuhrmann2013NonsmoothSaddleNodesI}, see \cite[Theorem
4.18]{fuhrmann2013NonsmoothSaddleNodesI} and its proof.

\begin{prop}[\cite{fuhrmann2013NonsmoothSaddleNodesI}]\label{prop: hat U w has non-empty interior}
  For Diophantine $\omega\in\T^d$, the set $\widehat{\mathcal{U}}_\omega$ has non-empty
  $\mathcal{C}^2$-interior and we have
  $\widehat{\mathcal{U}}_\omega\ssq\mathcal{N}_\omega$. Moreover, for all sufficiently large $a>0$, the parameter
  family $\hat f$ given by (\ref{e.atan-example}) is contained in
  $\widehat{\mathcal{U}}_\omega$.
\end{prop}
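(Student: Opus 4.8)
The plan is to deduce the statement from the corresponding, but more heavily decorated, result in \cite{fuhrmann2013NonsmoothSaddleNodesI}. Recall from there that the set $\mathcal{U}_\omega$ is carved out of $\mathcal{P}_\omega$ by a list of sixteen $\mathcal{C}^2$-estimates, that $\mathcal{U}_\omega$ has non-empty $\mathcal{C}^2$-interior for Diophantine $\omega$, that $\mathcal{U}_\omega\ssq\mathcal{N}_\omega$, and that the verification in \cite{fuhrmann2013NonsmoothSaddleNodesI} that the arctan family \eqref{e.atan-example} undergoes a non-smooth bifurcation proceeds by checking its membership in $\mathcal{U}_\omega$ for all large $a$. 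The sixteen estimates split into two groups: the \emph{static} ones, which are uniform pointwise bounds on the fibre maps $f_{\beta_c,\theta}$ and are essentially \eqref{axiom: 2}--\eqref{eq: lipschitz x in E}, and the \emph{recurrence} ones, which control the relative position of the critical regions $\mathcal{I}_n$ along the $\w$-orbit. The key observation is that $\widehat{\mathcal{U}}_\omega$ is obtained from $\mathcal{U}_\omega$ by keeping the static estimates verbatim and replacing the recurrence estimates by the \emph{conclusions} $(\mathcal{F})_n$, $(\mathcal{E})_n$ of the multiscale induction of \cite{fuhrmann2013NonsmoothSaddleNodesI}. It therefore suffices to prove the two chains of inclusions $\mathcal{U}_\omega\ssq\widehat{\mathcal{U}}_\omega\ssq\mathcal{N}_\omega$.

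For the first inclusion, let $\hat f\in\mathcal{U}_\omega$. The static part is immediate: \eqref{axiom: 2} is part of the set-up in \cite{fuhrmann2013NonsmoothSaddleNodesI} (existence of a first critical region together with contraction and expansion intervals), while \eqref{eq: lipschitz x}--\eqref{eq: lipschitz x in E} follow by choosing $\alpha$, $p$, $S$, $c$, $e$ appropriately from the uniform $\mathcal{C}^2$-bounds on $(f_{\beta_c,\theta})$ --- the two-sided bound \eqref{eq: lipschitz x} is a lower and upper bound on $f'_{\beta_c,\theta}$, \eqref{eq: lipschitz theta} is the bound in the $\theta$-direction, and the sharper estimates \eqref{eq: lipschitz x in C}, \eqref{eq: lipschitz x in E} on $C$ and $E$ come from \eqref{e.concavity} together with the fact that on these intervals the derivative is uniformly below, resp.\ above, the required thresholds. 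For the recurrence part, the recursive definition of $(\mathcal{I}_n)$ given above coincides verbatim with the one in \cite{fuhrmann2013NonsmoothSaddleNodesI}, and the content of \cite[Theorem~4.18]{fuhrmann2013NonsmoothSaddleNodesI} and its proof is precisely that for $\hat f\in\mathcal{U}_\omega$ and Diophantine $\omega$ the multiscale induction closes at every scale: one obtains $\mathcal{I}_n\neq\emptyset$ and slow-recurrence and smallness estimates which, after matching the constants ($b$, $K_n=K_0\kappa^n$, $(M_n)$, and $\eps_n=c_0\alpha^{-M_{n-1}b/(2p)}$), are exactly $(\mathcal{F}1)_n$, $(\mathcal{F}2)_n$ and $(\mathcal{E})_n$. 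Hence $f_{\beta_c}$ satisfies $(\mathcal{F})_n$ and $(\mathcal{E})_n$ for all $n\in\N$, so $\hat f\in\widehat{\mathcal{U}}_\omega$. In particular $\widehat{\mathcal{U}}_\omega$ has non-empty $\mathcal{C}^2$-interior, and \eqref{e.atan-example} lies in $\widehat{\mathcal{U}}_\omega$ for all large $a$.

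For the inclusion $\widehat{\mathcal{U}}_\omega\ssq\mathcal{N}_\omega$ the implication runs in the opposite direction and no longer refers to $\mathcal{U}_\omega$. By definition $\widehat{\mathcal{U}}_\omega\ssq\mathcal{S}_\omega$, so Theorem~\ref{t.saddle-node} applies and it only remains to exclude the smooth scenario $(\mathcal{S})$ at $\beta_c$. This is again supplied by \cite{fuhrmann2013NonsmoothSaddleNodesI}: the part of the analysis there that establishes the non-smooth picture uses only the static estimates \eqref{axiom: 2}--\eqref{eq: lipschitz x in E} together with the validity of $(\mathcal{F})_n$ and $(\mathcal{E})_n$ for all $n$ --- the expansion on $E$ forces $\hat\phi^-_{\beta_c}<\hat\phi^+_{\beta_c}$ on a set of positive Lebesgue measure, hence two distinct, non-continuous invariant graphs, while the contraction along the nested critical regions, the smallness $|\mathcal{I}_n|<\eps_n\to 0$ and the slow recurrence $(\mathcal{F})_n$ together produce a residual set of pinched points where the two graphs coincide. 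Since for $\hat f\in\widehat{\mathcal{U}}_\omega$ all of these hypotheses hold by assumption, the conclusion $(\mathcal{N})$ holds, i.e.\ $\hat f\in\mathcal{N}_\omega$.

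I expect the only real difficulty to be bookkeeping: one has to go through \cite{fuhrmann2013NonsmoothSaddleNodesI} and verify, condition by condition, that the streamlined list \eqref{axiom: 2}--\eqref{eq: lipschitz x in E}, $(\mathcal{F})_n$, $(\mathcal{E})_n$ really does capture all of the input actually used in the proof of \cite[Theorem~4.18]{fuhrmann2013NonsmoothSaddleNodesI} for the non-smooth bifurcation, and that the constants $\alpha,p,S,b,K_0,\kappa,(M_n),c_0$ can be chosen consistently. There is no new mathematical content beyond this translation, since the hard analytic work --- closing the multiscale induction on a $\mathcal{C}^2$-open set of parameter families --- is exactly what \cite[Theorem~4.18]{fuhrmann2013NonsmoothSaddleNodesI} already provides.
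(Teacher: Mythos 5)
The paper gives no standalone proof of this proposition --- it is explicitly deferred to \cite[Theorem~4.18]{fuhrmann2013NonsmoothSaddleNodesI} and the surrounding discussion in Section~\ref{subsec: basic setting and notation}, which is precisely the two-step reduction you reconstruct: $\mathcal{U}_\omega$ is defined by the full list of $\mathcal{C}^2$-estimates, the multiscale induction there outputs $(\mathcal{F})_n$, $(\mathcal{E})_n$ at every scale (giving $\mathcal{U}_\omega\ssq\widehat{\mathcal{U}}_\omega$, hence non-empty interior and inclusion of the arctan family for large $a$), and the non-smooth conclusion $(\mathcal{N})$ is derived from exactly the hypotheses retained in the definition of $\widehat{\mathcal{U}}_\omega$ (giving $\widehat{\mathcal{U}}_\omega\ssq\mathcal{N}_\omega$). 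Your proposal is a correct and faithful unpacking of what the paper treats as implicit in the citation, with no genuine divergence in approach.
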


Thus, in order to prove Theorem~\ref{t.main}, our only task is to show that
the properties of the parameter families in
$\widehat{\mathcal{U}}_\omega$ stated in this section imply the
assertions on the dimensions and the topological structure of
$\phi^+_{\beta_c}$ and $\phi^-_{\beta_c}$.

\section{Hausdorff, pointwise and information dimension}

\label{sec: estimates on the iterated boundary lines}

Our analysis of the structure of the SNA $\hat\Phi^+_{\beta_c}$
appearing in parameter families $\hat f\in\widehat{\mathcal{U}}_\omega$
hinges on the fact that the function $\hat\phi^+_{\beta_c}$ can be
approximated by the images of the curve $\T^d\times\{1\}$ under
successive iterates of the map $f_{\beta_c}$. Since from now on the
critical parameter $\beta_c$ and thus also the map $f_{\beta_c}$ are
fixed, we suppress the parameter from the notation. Hence, from now on
$f$ will always denote a map that belongs to 
\begin{align*}
 \mc V =\left\{f \in
\mathcal{F}_\omega\mid f \text{ satisfies 
(\ref{eq:4})--(\ref{e.concavity}), (\ref{axiom: 2})--(\ref{eq:
  lipschitz x in E})  as well as } (\mathcal{F})_n \text{ and }
(\mathcal{E})_n \text{ for all } n\in\N
\right\}.
\end{align*}
As before, we let 
\begin{equation*}
  \Lambda \ = \ \bigcap_{n\in\Z} f^n(\Gamma)
\end{equation*}
be the maximal $f$-invariant set inside $\Gamma$ and denote by
$\phi^-$ and $\phi^+$ its bounding graphs, that is,
$\phi^-(\theta)=\inf\Lambda_\theta$ and
$\phi^+(\theta)=\sup\Lambda_\theta$ (cf. \eqref{eq:8}). Now given $\theta \in \T^d$, let
\begin{align*}
  \phi_n^+(\theta)\= f_{\theta-n\w}^n(1)=f_{\theta-\w}\circ\ldots\circ f_{\theta-n\w}(1) \quad \text{and}
  \quad \phi_n^-(\theta)\= f_{\theta+n\w}^{-n}(0)=f^{-1}_{\theta+\w}\circ\ldots\circ f_{\theta+n\w}^{-1}(0),
\end{align*}
with $f^n_{\theta}(x)=\pi_x \circ f^n(\theta,x)$ for all integers\footnote{
Note that the invariant graph $\phi^-\geq 0$ cannot be crossed by any orbit. Hence, due to the monotonicity of $f_\theta$ on all $\X$ (for each $\theta$) as well as
\eqref{eq:4} and \eqref{e.monotonicity}, $f^{-n}_\theta(0)$ is indeed well-defined for all $n\in\N$ and arbitrary $\theta\in \T^d$.
} 
$n\in\Z$ where $\pi_x$ is the projection to the second coordinate.
We call $\phi_n^+$ the \emph{$n$-th iterated upper boundary line} and
$\phi_n^-$ the \emph{$n$-th iterated lower boundary line}.  Assumption (\ref{eq:4}) and the monotonicity \eqref{e.monotonicity} yield that $\left(\phi_n^+\right)_{n\in
  \N}$ and $\left(\phi_n^-\right)_{n\in \N}$ are monotonously
decreasing and increasing, respectively.  Moreover, it is easy to see
from \eqref{e.monotonicity} that
$[\phi^-_n,\phi^+_n]=\bigcap_{k=-n}^n f^n(\Gamma)$. As a consequence, it
is immediate that 
\begin{equation*}
  \phi^+(\theta) \ = \ \nLim \phi^+_n(\theta) \quad \textrm{ and } 
  \quad \phi^-(\theta) \ = \nLim \phi^-_n(\theta)  . 
\end{equation*}
Thus, in order to draw conclusions on the structure of the bounding graphs, it is natural to study the iterated boundary lines first.
Figure~\ref{fig: iterated boundary lines} shows the first $6$ iterated boundary lines for the critical parameter in the example family (\ref{e.atan-example}) with $\w$ the golden mean and
parameters $a=40$ and $\beta_c\approx 0.48714$. These pictures reveal a
very characteristic pattern. Let us look carefully at the evolution of $\phi^+_n$.

\begin{figure}
\centering
\begin{tabular}{ccc}
\subfloat{\includegraphics[scale=0.2]{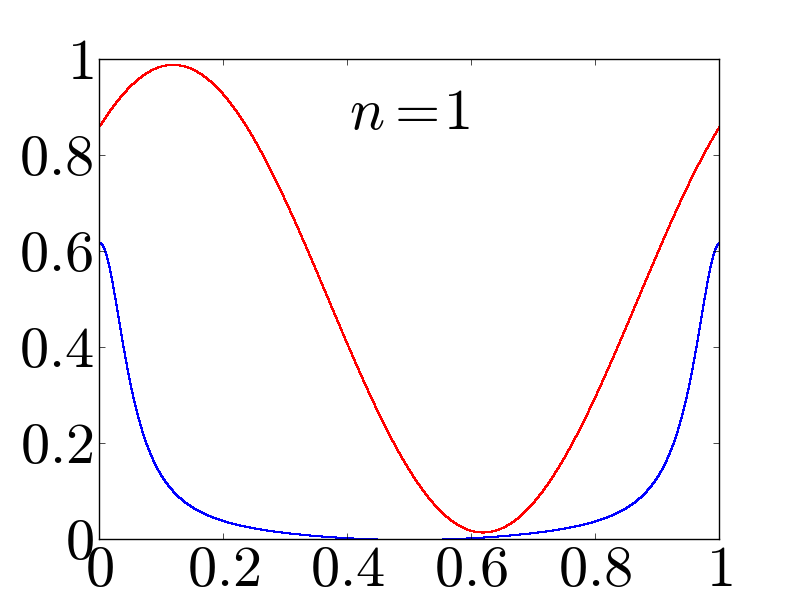}} &
\subfloat{\includegraphics[scale=0.2]{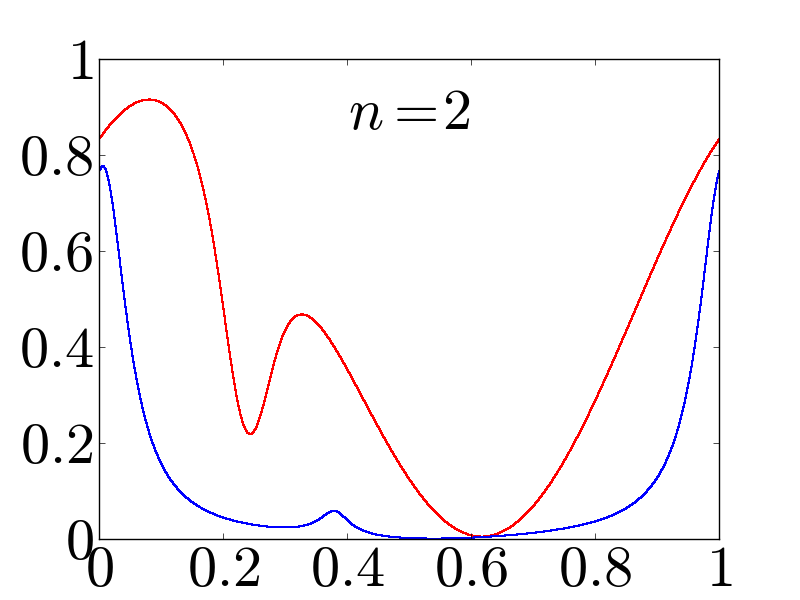}} &
\subfloat{\includegraphics[scale=0.2]{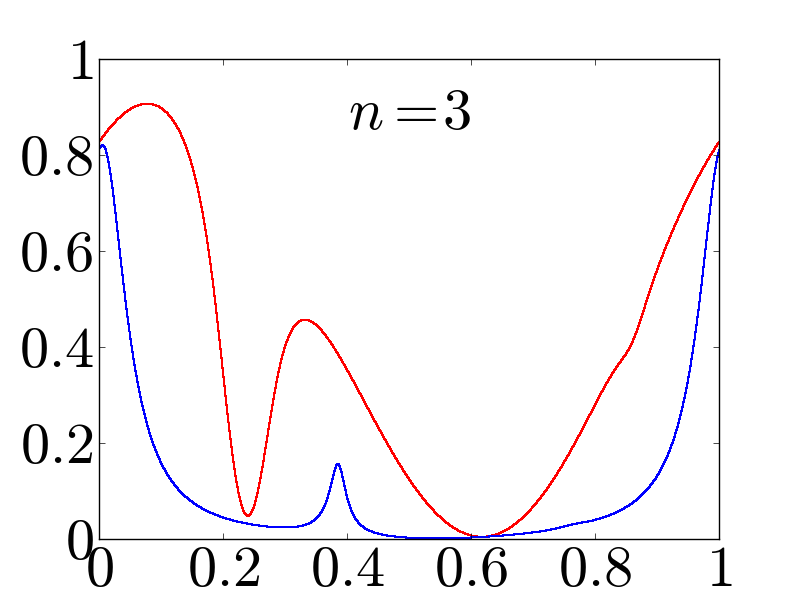}} \\
\subfloat{\includegraphics[scale=0.2]{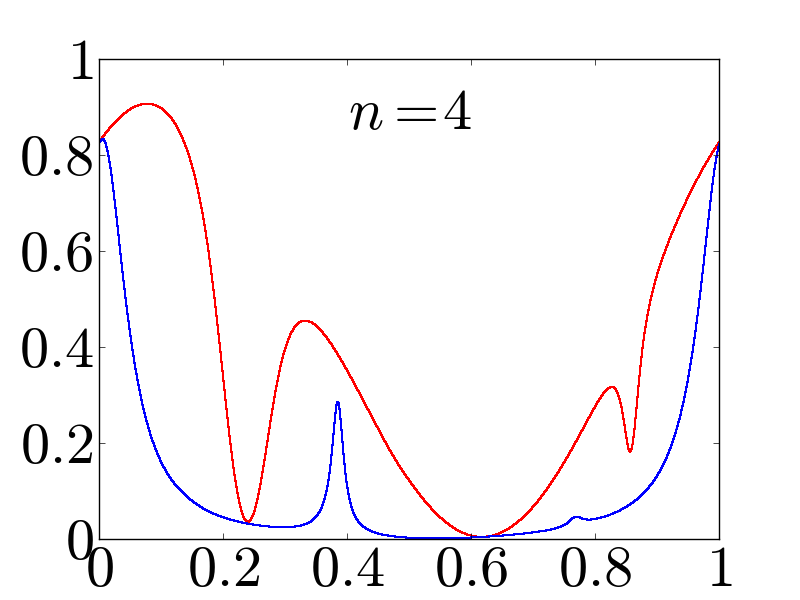}} &
\subfloat{\includegraphics[scale=0.2]{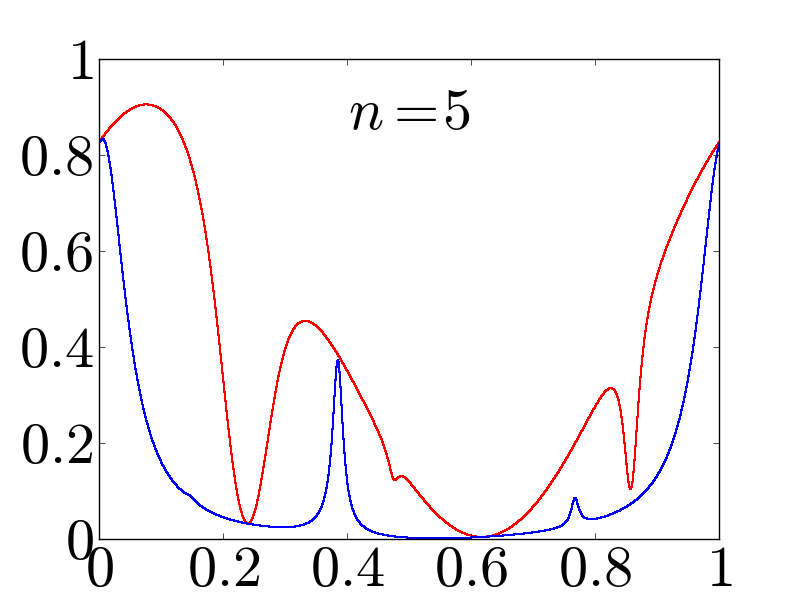}} &
\subfloat{\includegraphics[scale=0.2]{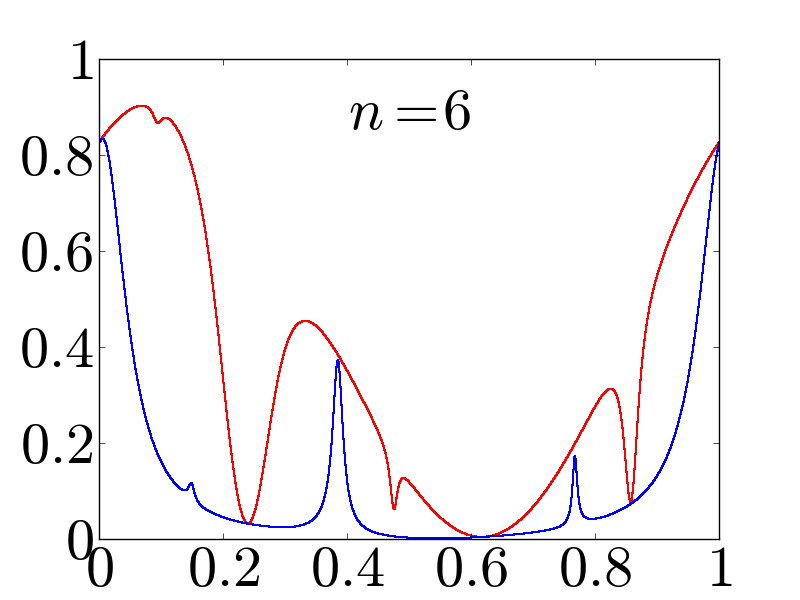}} \\
\end{tabular}
\caption{The first $6$ iterated upper and lower boundary lines $\phi^+_n$ (red) and $\phi^-_n$ (blue), respectively, of the family \eqref{e.atan-example} for $a=40$ at $\beta=0.48714$ with $\w$ the golden mean.}
\label{fig: iterated boundary lines}
\end{figure}

For $n=1$, we see that a first peak exists in the vicinity of $\theta=\w$, that is, above the set $\I_0+\w$ (cf. \eqref{axiom: 2}). After a second iteration, the image of this peak appears as a second peak in the vicinity of $2\w$ while outside this new peak the graph seems--more or less--unchanged. The second peak is not as pronounced as the first peak yet since the strong expansion close to the zero line (due to \eqref{eq: lipschitz x in E}) enlarged the tiny gap between $\phi^+_1(\w)$ and $\phi^-_1(\w)$. However, after one more iteration, the second peak is \emph{stabilised}, that is, its shape is essentially fixed for higher iterations. It is also important to observe
that the graph outside this peak has not changed apart from a small neighbourhood of $3\w$ in the step from $n=2$ to $n=3$. Furthermore, note that the second peak 
is of much smaller size than the first one.

Though the third peak around $3\w$ is already hardly visible at $n=3$, it clearly stabilises until $n=6$ and the graph only changes close to $4\w$ and $5\w$ along this stabilisation. Altogether, this motivates the following qualitative claim.

\begin{center}
\begin{minipage}{0.9\textwidth}
\emph{
$\phi^+_{n+1}$ differs from $\phi^+_{n}$ only in smaller and smaller neighbourhoods of those peaks around $j\w$ (for $j=1,\ldots,n+1$) which are not stabilised yet after $n$ iterations.
}
\end{minipage}
\end{center}

The point is that every peak eventually stabilises in those $\theta$ which are not hit by peaks that appear at higher iterations. Moreover, the measure of the these future peaks tends to zero.
As $\phi_j^+$ is Lipschitz-continuous with a Lipschitz constant $L_j$, 
the claim implies that we get essentially the same Lipschitz constant $L_j$ for $\phi_n^+$ (with arbitrary $n\geq j$) at all those points at which $\phi_j^+$ is stabilised already. 

By this means, we are able to establish a decomposition of $\phi^+$ into Lipschitz graphs whose Hausdorff dimension equals $d$ (see Lemma~\ref{lemma_Hausdorff_dimension_Lipschitz_image}).  By the countable stability of the Hausdorff dimension (see Lemma~\ref{lem: Hausdorff dimension countably stable}), this yields that $D_H(\Phi^+)=d$. 
Part (iii) and (iv) of Theorem~\ref{t.main} are not so easy to illustrate on this qualitative level since we need some understanding of the local densities of those sets which are not hit by future peaks. Still, despite some refinement, the arguments are very much based on the above observations.

To formalise ideas, we introduce
\begin{align*}
\Omega_j^n\= \T^d \setminus \bigcup_{k=j}^{\infty} \bigcup_{l=M_{k-1}}^{\min\{n,2K_kM_k\}} \I_k+l\w, \qquad
\Omega_j\= \bigcap_{n\in\N}\Omega_j^n = \T^d \setminus \bigcup_{k=j}^{\infty}
\bigcup_{l=M_{k-1}}^{2K_kM_k}\I_k+l\w,
\end{align*}
where $j,n \in \N$. 
A way to interpret these definitions in terms of our qualitative discussion is the following: by the recursive definition of $\I_j$ (cf. Section~\ref{subsec: basic setting and notation}),
the size of the $M_{j-1}$-th peak is about $|\I_j|$.
Hence, $\Omega_j$ only contains points which are not hit by any peak that appears
after $M_{j-1}$ iterations. Likewise, $\Omega_j^n$ contains points at which $\phi_n^+$ might stabilise in finite time, but at which new peaks could still appear at future iterations.

Observe that $K_kM_k\leq K_0 \kappa^k \cdot 2 K_{k-1}M_{k-1}\leq \ldots\leq K_0^{k+1} \kappa^{\sum_{l=1}^{k} l} 2^{k} M_0$ while $|\I_k|<\eps_k=c_0\alpha_0^{-M_{k-1}}\leq c_0 \alpha_0^{-K_0^{k-1} \kappa^{\sum_{l=1}^{k-2} l} 2^{k-1} M_0}$. Thus,
we have $2K_{k}M_{k}\eps_{k}^d<\eps_{k}^{d/2}$
for large enough $k$
and hence,
\begin{align}\label{eq: measure of Omegaj}
\operatorname{Leb}_{\T^d}\left(\bigcup_{k=j}^\infty\bigcup_{l=M_{k-1}}^{2K_kM_k}\I_k+l\w\right)<
 \sum_{k=j}^\infty V_{d}2K_kM_k\eps_k^d <\sum_{k=j}^\infty V_{d}\eps_k^{d/2},
\end{align}
for large enough $j$, where $V_{d}$ is the normalising factor of the $d$-dimensional Lebesgue measure.
Thus, $\operatorname{Leb}_{\T^d} (\Omega_j)>0$ for large enough $j\in \N$.

There might still be points which get hit by infinitely many peaks so that no eventual stabilisation occurs. These are collected within
\begin{align*}
 \Omega_\infty \= \T^d\setminus \bigcup_{j\in \N} \Omega_j=\bigcap_{i=1}^{\infty} 
\bigcup_{k=i}^{\infty}
\bigcup_{l=M_{k-1}}^{2K_kM_k}\I_k+l\w.
\end{align*}

In the following, we only consider the upper boundary lines $\phi^+_n$ and the upper bounding graph $\phi^+$. All of the
results and proofs which are only stated in terms of $\phi^+$ and $\phi_n^+$ hold analogously for the lower boundary lines $\phi_n^-$ and the lower bounding graph $\phi^-$
as can be seen by considering $f^{-1}$ instead of $f$.

The next proposition is the basis of our geometrical investigation of $\phi^+$. 
Its proof, which is the technical core of this paper, is given in the last section. However, the statement should seem plausible to the reader in the light of the above discussion.
\begin{prop}\label{prop: properties iterated boundary lines}
Let $f\in \mc V$. There are $\lam>0$ and $C>0$ such that the following is true for sufficiently large $j$.
\begin{enumerate}[(i)]
\item 
Suppose $\theta \in \Omega^n_j$ and $n >2K_{j-1}M_{j-1}-M_{j-1}-1$. Then
$|\phi^+_n(\theta)-\phi^+_{n-1}(\theta)|\leq \alpha^{-\lam (n-1)}$.
\label{prop: properties iterated boundary lines ii} 
\item 
Suppose $\theta,\theta'\in \Omega_j^n$ and $n\in \N$.
Then $\left|\phi^+_{n}(\theta)-\phi^+_{n}(\theta')\right|\leq L_j
  d(\theta,\theta')$ for some $L_j\leq\eps_j^{-CK_{j-1}}$ independent of $n$.
\label{prop: properties iterated boundary lines iii}
\end{enumerate}
\end{prop}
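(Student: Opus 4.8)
The plan is to recast both estimates as bounds on products of local slopes of $f$ along the finite orbit underlying $\phi^+_n(\theta)$, and then to control these products by a careful bookkeeping of how often that orbit visits the critical regions $\I_k$. For part~(i), put $\psi=\theta-(n-1)\w$ and use the cocycle identities $\phi^+_n(\theta)=f^{n-1}_{\psi}\bigl(\phi^+_1(\psi)\bigr)$ and $\phi^+_{n-1}(\theta)=f^{n-1}_{\psi}(1)$. Since $0\le\phi^+_1(\psi)\le1$ and each fibre map is monotone and concave by \eqref{e.monotonicity}--\eqref{e.concavity}, the image of $[\phi^+_1(\psi),1]$ under $f^{n-1}_\psi$ has length $\bigl(1-\phi^+_1(\psi)\bigr)\prod_{t=0}^{n-2}s_t\le\prod_{t=0}^{n-2}s_t$, where $s_t\in[\alpha^{-p},\alpha^p]$ (by \eqref{eq: lipschitz x}) is the secant slope of $f_{\psi+t\w}$ over the interval $[\phi^+_{t+1}(\psi+t\w),\phi^+_{t}(\psi+t\w)]$, whose endpoints lie on the iterated boundary lines; hence $s_t\le\alpha^{-2/p}$ by \eqref{eq: lipschitz x in C} as soon as $\phi^+_{t+1}(\psi+t\w)\in C$. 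For part~(ii), compare the orbits issued from $(\theta-n\w,1)$ and $(\theta'-n\w,1)$: writing $\delta_t$ for their fibre distance, the triangle inequality together with \eqref{eq: lipschitz x} and \eqref{eq: lipschitz theta} gives $\delta_0=0$ and $\delta_{t+1}\le\rho_t\delta_t+S\,d(\theta,\theta')$, where $\rho_t\in[\alpha^{-p},\alpha^p]$ is again a secant slope (bounded by $\alpha^{-2/p}$ when both orbit points sit over $C$), so that $|\phi^+_n(\theta)-\phi^+_n(\theta')|=\delta_n\le S\,d(\theta,\theta')\sum_{t=0}^{n-1}\prod_{r=t+1}^{n-1}\rho_r$. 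Everything thus reduces to estimating products of the $s_t$ (resp.\ $\rho_r$).

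Call a step \emph{regular} if the orbit point(s) involved lie over $\T^d\smin\I_0$ with fibre coordinate in $C$ (so that the corresponding slope is $\le\alpha^{-2/p}$, by \eqref{axiom: 2} and \eqref{eq: lipschitz x in C}) and \emph{critical} otherwise; a maximal run of critical steps is a \emph{critical excursion}. The heart of the matter is to understand a critical excursion passing the scale-$k$ region. By the recursive definition of $\I_{k+1}$ via $\A_k$ and $\B_k$, such an excursion is an orbit piece starting in $C$, driven towards the expansion interval $E$, where \eqref{eq: lipschitz x in E} produces expansion, kept near $E$ for a controlled time, and then mapped back into $C$; inside it one meets only finitely many scale-$(k-1)$ excursions. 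By induction on $k$, using \eqref{e.concavity}, \eqref{eq: lipschitz theta}, the separation property $(\mathcal F1)_k$, and crucially the constraints $p\ge\sqrt2$ and $b>\sqrt{(p^2+1)/(p^2+2)}$ (which are exactly what prevents the near-$E$ expansion from outgrowing the ensuing contraction across scales), one obtains: (a) a scale-$k$ excursion together with the recovery stretch needed to return to $C$ has \emph{net} contraction at least $\alpha^{-\lam_0\ell}$ over its total length $\ell$; (b) its maximal intermediate gain is at most $\alpha^{\lam_1 M_k}$; and (c) by $(\mathcal F1)_k$, consecutive scale-$k$ excursions are more than $2K_kM_k$ steps apart. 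Consequently the scale-$k$ critical steps occupy a fraction of order $(K_0\kappa^k)^{-1}$ of any orbit segment, so that, $K_0$ and $\kappa$ being large, all critical steps together occupy an arbitrarily small overall fraction, \emph{provided the scales that occur are bounded}. This is where $\Omega^n_j$ enters: if $\theta\in\Omega^n_j$ (resp.\ $\theta,\theta'\in\Omega^n_j$), then for every $k\ge j$ the orbit meets $\I_k\ssq\I_j$ only among its last $M_{j-1}$ iterates (a peak not yet formed, so of negligible effect on the length) or among its very first iterates (where the interval length is still capped by $1$); in particular $\Omega_\infty$ is avoided and no scale-$k$ excursion with $k\ge j$ can spoil the global contraction.

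It remains to assemble the pieces. For part~(i): the hypothesis $n>2K_{j-1}M_{j-1}-M_{j-1}-1$ gives $M_{j-1}\le n/(2K_{j-1}-1)$, so the contraction accumulated over the first $n-1-M_{j-1}$ steps, all of which are regular apart from fully-recovered scale-$(k<j)$ excursions, dominates the at most $M_{j-1}$ possibly-expanding final steps; hence $\sum_{t=0}^{n-2}\log_\alpha s_t\le-\lam(n-1)$ and $|\phi^+_n(\theta)-\phi^+_{n-1}(\theta)|\le\prod_{t=0}^{n-2}s_t\le\alpha^{-\lam(n-1)}$ for a suitable $\lam>0$. For part~(ii): between two consecutive critical excursions the regular contraction drives $\delta_t$ back down to $O\bigl(S\,d(\theta,\theta')\bigr)$, whereas across a single scale-$(k<j)$ excursion $\delta_t$ grows by at most the factor $\alpha^{\lam_1 M_k}$; since the largest scale below $j$ is $j-1$, this yields $\delta_n\le S\,d(\theta,\theta')\cdot\alpha^{\lam_1 M_{j-1}}\cdot\mathrm{const}$, and as $\eps_j=c_0\alpha^{-M_{j-1}b/(2p)}$ the right-hand side is at most $\eps_j^{-CK_{j-1}}d(\theta,\theta')$ for a suitable universal $C$, uniformly in $n$. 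The main obstacle throughout is the inductive excursion estimate (a)--(b): one must show that the self-similar way in which a scale-$k$ excursion is built from regular behaviour and scale-$(k-1)$ excursions keeps the near-$E$ expansion under control, while simultaneously tracking the fibre coordinate in order to know which of \eqref{eq: lipschitz x}--\eqref{eq: lipschitz x in E} governs each step; this is precisely where the bulk of the final section, resting on the multiscale analysis of \cite{fuhrmann2013NonsmoothSaddleNodesI}, is concentrated.
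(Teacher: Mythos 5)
Your overall strategy mirrors the paper's: rewrite $\phi^+_n(\theta)-\phi^+_{n-1}(\theta)$ as a product of secant slopes of the fibre maps along the orbit of $(\theta-n\omega,1)$, reduce part (ii) to a Gronwall-type recursion $\delta_{t+1}\leq\rho_t\delta_t+S\,d(\theta,\theta')$, and then bound the resulting products by counting how many iterates land in the contracting region $C$ over $\T^d\setminus\I_0$. The reductions you carry out (the cocycle identities giving $\phi^+_n(\theta)=f^{n-1}_\psi(\phi^+_1(\psi))$ and $\phi^+_{n-1}(\theta)=f^{n-1}_\psi(1)$, the identification of the endpoints of the iterated interval with $\phi^+_{t+1}(\psi+t\omega)$ and $\phi^+_t(\psi+t\omega)$, and the unrolled sum for part (ii)) are correct and match the paper's \eqref{eq: induction step n}.

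The genuine gap sits exactly where you say it does: your inductive claims (a)--(c) about ``critical excursions'' are precisely the quantitative heart of the argument, and you state them without proof. In the paper this content is carried by Lemma~\ref{lem: duration of stay in contracting/expanding regions} and Lemma~\ref{lem: estimate for times spent in contracting/expanding regions} (imported from \cite{fuhrmann2013NonsmoothSaddleNodesI}), plus the newly proved Corollary~\ref{cor: times in C} and Proposition~\ref{prop: pk < ik}. Together these yield the uniform lower bound $\mathcal P_k^n(\theta_0,1)\geq b^2(n-k)$ for all $k$ up to $n-(2K_{j-1}M_{j-1}-M_{j-1}-1)$, which is the single inequality from which both parts then follow by elementary algebra. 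Without a proof of this bound (or an equivalent per-excursion estimate), the proposal does not close. Note also that the condition $b>\sqrt{(p^2+1)/(p^2+2)}$ is used, not as a per-excursion stability criterion, but to make $\lam=2b^2/p-p(1-b^2)>0$ and to make the geometric series $\sum_l\alpha^{(2p(1-b^2)-2(2b^2-1)/p)l}$ converge; your phrasing is heuristically consistent with this, but the precise mechanism is the final summation, not the excursion-by-excursion cancellation.

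One further imprecision in your assembly of part (ii): you assert that a scale-$(j-1)$ excursion inflates $\delta_t$ by at most $\alpha^{\lam_1 M_{j-1}}$, and conclude $L_j\lesssim\alpha^{\lam_1 M_{j-1}}$. The paper's argument gives a weaker (and larger) bound: for $k$ within $2K_{j-1}M_{j-1}-M_{j-1}$ of $n$ the counting bound $\mathcal P_k^n\geq b^2(n-k)$ is not available (this is exactly the range excluded by Proposition~\ref{prop: pk < ik}, because $\Omega_j^n$ controls only scales $\geq j$), so the paper simply concedes worst-case expansion $\alpha^{p(n-k)}$ on those terms and obtains $L_j\lesssim\alpha^{2pK_{j-1}M_{j-1}}$, which still satisfies $L_j\leq\eps_j^{-CK_{j-1}}$ for $C\geq 4p^2/b$. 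Your claimed $\alpha^{\lam_1 M_{j-1}}$ bound is not justified by the sketch and would require controlling scale-$(j-1)$ excursions near the endpoint $n$ much more tightly than $\Omega_j^n$ allows; in particular, $(\mc F1)_{j-1}$ only separates consecutive hits of $\I_{j-1}$ by $2K_{j-1}M_{j-1}$ steps, so the ``tail'' of the orbit that escapes the counting bound genuinely has length of order $K_{j-1}M_{j-1}$, not $M_{j-1}$.
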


\label{sec: hausdorff and pointwise dimension}
Now, this information on the geometry of the curves $\phi_n^+$ allows
to determine the Hausdorff dimension of $\Phi^+$ rather easily
(cf. \cite{GroegerJaeger2012SNADimensions}).
\begin{thm}\label{prop: dimensions_subgraphs}
Suppose $f\in\mathcal{V}$. Then the following statements hold:
\begin{enumerate}[(i)]
	\item $D_H(\Phi^+)=d$,
	\item $\mu_{\phi^+}$ is $d$-rectifiable and exact dimensional
          with $d_{\mu_{\phi^+}}=d$.
\end{enumerate}
\end{thm}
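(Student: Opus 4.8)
The plan is to reduce both assertions to Proposition~\ref{prop: properties iterated boundary lines}, which carries all the real work; once that is available, everything else is soft measure- and dimension-theoretic bookkeeping in the spirit of \cite{GroegerJaeger2012SNADimensions}. The backbone is the decomposition of the base into the increasing family $(\Omega_j)_{j\in\N}$ together with the residual set $\Omega_\infty=\T^d\setminus\bigcup_j\Omega_j$, combined with two facts: $\phi^+$ is Lipschitz on each $\Omega_j$, and $\Omega_\infty$ is negligible both for Lebesgue measure and for Hausdorff dimension.

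First I would establish these two facts. Since $\Omega_j\subseteq\Omega_j^n$ for every $n$, part~(ii) of Proposition~\ref{prop: properties iterated boundary lines} gives $|\phi_n^+(\theta)-\phi_n^+(\theta')|\leq L_j\,d(\theta,\theta')$ for all $\theta,\theta'\in\Omega_j$ and all $n$, with $L_j$ independent of $n$; passing to the limit and using $\phi^+=\lim_n\phi_n^+$ pointwise shows $\phi^+|_{\Omega_j}$ is $L_j$-Lipschitz. Hence the graph map $G:\theta\mapsto(\theta,\phi^+(\theta))$ is bi-Lipschitz on $\Omega_j$ (its inverse being the $1$-Lipschitz projection $\pi_{\T^d}$), so $\Phi^+\cap(\Omega_j\times X)=G(\Omega_j)$ is a countably $d$-rectifiable Lipschitz graph, and by Lemma~\ref{lemma_Hausdorff_dimension_Lipschitz_image} it has Hausdorff dimension $D_H(\Omega_j)\leq d$. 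For the negligibility of $\Omega_\infty$: by definition it is the $\limsup$ of the family $\{\I_k+l\w: k\geq 1,\ M_{k-1}\leq l\leq 2K_kM_k\}$, whose members have diameter $<\eps_k$ and whose number at level $k$ is at most $2K_kM_k$. The estimates recorded just before~\eqref{eq: measure of Omegaj} show that $2K_kM_k$ grows only like $e^{O(k^2)}$ while $\eps_k$ decays super-exponentially in $k$, so $\sum_{k,l}|\I_k+l\w|^s\leq\sum_k 2K_kM_k\eps_k^s<\infty$ for \emph{every} $s>0$; by Lemma~\ref{lem: Hausdorff dimension limsup set} this yields $D_H(\Omega_\infty)=0$, and in particular $\operatorname{Leb}_{\T^d}(\Omega_\infty)=0$ (which also follows directly from~\eqref{eq: measure of Omegaj}).

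Part~(i) is then immediate. The lower bound $D_H(\Phi^+)\geq D_H(\pi_{\T^d}(\Phi^+))=D_H(\T^d)=d$ holds since $\pi_{\T^d}$ is $1$-Lipschitz and onto. For the upper bound I would write $\Phi^+=\bigcup_j\big(\Phi^+\cap(\Omega_j\times X)\big)\cup\big(\Phi^+\cap(\Omega_\infty\times X)\big)$ and invoke countable stability of the Hausdorff dimension (Lemma~\ref{lem: Hausdorff dimension countably stable}): the pieces over $\Omega_j$ have dimension $\leq d$ by the above, while $\Phi^+\cap(\Omega_\infty\times X)\subseteq\Omega_\infty\times[0,1]$ has dimension at most $D_H(\Omega_\infty)+\overline D_B([0,1])=0+1=1\leq d$ by Theorem~\ref{thm: Hausdorff dimension product sets}. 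Hence $D_H(\Phi^+)=d$.

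For part~(ii) I would use that $\mu_{\phi^+}$ is precisely the push-forward of $\operatorname{Leb}_{\T^d}$ under $G$. Decomposing $\bigcup_j\Omega_j$ into the disjoint Borel sets $\Omega_j'=\Omega_j\setminus\Omega_{j-1}$ (with $\Omega_0'=\emptyset$), on each $\Omega_j'$ the map $G$ is bi-Lipschitz, so $G_*(\operatorname{Leb}_{\T^d}|_{\Omega_j'})$ and $\mathcal H^d|_{G(\Omega_j')}$ are mutually absolutely continuous with a density bounded between $\max(1,L_j)^{-d}$ and $1$ (up to the normalising constant), and $G(\Omega_j')$ is countably $d$-rectifiable. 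Since $\mu_{\phi^+}(\Omega_\infty\times X)=\operatorname{Leb}_{\T^d}(\Omega_\infty)=0$, summing over $j$ gives $\mu_{\phi^+}=\Theta\,\mathcal H^d|_A$ with $A=G(\bigcup_j\Omega_j)$ countably $d$-rectifiable and $\Theta\geq 0$ Borel; thus $\mu_{\phi^+}$ is $d$-rectifiable, and Corollary~\ref{cor: dimensions rectifiable measure} shows it is exact dimensional with $d_{\mu_{\phi^+}}=d$, whence also the information dimension equals $d$ by the remark following the definition of exact dimensionality. The only genuinely hard input is Proposition~\ref{prop: properties iterated boundary lines}; among the remaining steps the one requiring a little care is the verification that $D_H(\Omega_\infty)=0$ from the explicit growth of $K_kM_k$ against the decay of $\eps_k$, while the rest is routine.
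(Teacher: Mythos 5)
Your proof is correct and follows essentially the same path as the paper: Lipschitz control of $\phi^+$ on each $\Omega_j$ via Proposition~\ref{prop: properties iterated boundary lines}(ii), negligibility of $\Omega_\infty$ as a $\limsup$ set, countable stability of the Hausdorff dimension for~(i), and $d$-rectifiability of $\mu_{\phi^+}$ via the decomposition over the $\Omega_j$ for~(ii). The only cosmetic difference is your use of the $1$-Lipschitz projection $\pi_{\T^d}$ for the lower bound $D_H(\Phi^+)\geq d$, where the paper instead combines $\operatorname{Leb}_{\T^d}(\Omega_j)>0$ with the bi-Lipschitz property of $g_j$; both are sound.
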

\begin{proof}
  For each $j\in\N\cup\{\infty\}$ set
  $\psi_j\=\left.\phi^+\right|_{\Omega_j}$.  First, we want to show
  that the graph $\Psi_j=\{(\theta,\psi_j(\theta))\:\theta\in\Omega_j\}$ is the image of a bi-Lipschitz continuous
  function $g_j$ for all $j\in\N$. Define
  $g_j:\Omega_j\to\Omega_j\times\X$ via
  $\theta\mapsto(\theta,\psi_j(\theta))$ for all
  $j\in\N\cup\{\infty\}$. We have that $g_j(\Omega_j)=\Psi_j$ and
  $d_{\T^d\times\X}(g_j(\theta),g_j(\theta'))\geq d(\theta,\theta')$
  for all $\theta,\theta'\in\Omega_j$. 
We may assume without loss of generality that $j$ is large enough\footnote{Observe that for $j\leq J$, we have $\Psi_j\ssq\Psi_{J}$ because $\Omega_j\ssq\Omega_{J}$.} so that
Proposition~\ref{prop: properties
    iterated boundary lines} \eqref{prop: properties iterated boundary lines iii} yields that
  $\left.\phi_n\right|_{\Omega_j}$ is Lipschitz continuous with
  Lipschitz constant $L_j$ independent of
  $n$. Since
  $\psi_j=\lim_{n\to\infty}\left.\phi_n\right|_{\Omega_j}$, we also
  get that $\psi_j$ is Lipschitz continuous with the same constant and
  therefore
\begin{align*}
		d_{\T^d\times\X}(g_j(\theta),g_j(\theta'))\leq\left(1+L_j\right)d(\theta,\theta'),
\end{align*}
for all $\theta,\theta'\in\Omega_j$ and $j\in\N$.  
Hence, $g_j$ is bi-Lipschitz continuous for each $j\in\N$.

(i) We want to make use of the fact that the Hausdorff dimension is
countably stable, see Lemma \ref{lem: Hausdorff dimension countably
  stable}. Because of the bi-Lipschitz continuity, we get that
$D_H(\Psi_j)=D_H(\Omega_j)$. Since $\Leb_{\T^d}(\Omega_j)>0$ for large
enough $j$, this implies $D_H(\Psi_j)=d$.  What is left to show is
that $D_H(\Psi_\infty)\leq d$. Observe that $\Omega_\infty$ is a
$\limsup$ set. With a proper relabelling and doing a similar estimation
as in \eqref{eq: measure of Omegaj}, we can use Lemma \ref{lem:
  Hausdorff dimension limsup set} to conclude that
$D_H(\Omega_\infty)\leq s$ for all $s>0$. Therefore,
$D_H(\Omega_\infty)=0$. Further,
$\Psi_\infty\subset\Omega_\infty\times\X$ and hence
$D_H(\Psi_\infty)\leq D_H(\Omega_\infty)+D_B(\X)=1\leq d$, applying
Theorem \ref{thm: Hausdorff dimension product sets}.

(ii) Note that by definition, $\mu_{\phi^+}$ is absolutely continuous
with respect to $\left.\mathcal H^d\right|_{\Phi^+}$. We have that
$\mu_{\phi^+}(\Psi_\infty)=0$ and therefore $\mu_{\phi^+}$ is also
absolutely continuous with respect to $\left.\mathcal
  H^d\right|_{\Phi^+\backslash\Psi^\infty}$. Since
$\Phi^+\backslash\Psi_\infty=\bigcup_{j\in \N}\Psi_j$ is a
countably $d$-rectifiable set--using the observation from the
beginning of the proof--we get that $\mu_{\phi^+}$ is $d$-rectifiable,
too. Now, by applying Corollary \ref{cor: dimensions rectifiable
  measure}, we obtain that $\mu_{\phi^+}$ is exact dimensional with
pointwise dimension $d_{\mu_{\phi^+}}=d$.
\end{proof}
\begin{rem}
By the remark in Section~\ref{sec: exact dimensional and rectifiable measures}, we immediately get that the information dimension of $\mu_{\phi^+}$ equals $d$.
\end{rem}
\section{Minimality and box-counting dimension}\label{sec:
  filled-in property}
For $n\in \N_{0}$, we denote by $\tilde \I_n$ the $\eps_{n}/2$-neighbourhood of $\I_n$, that is, $\tilde \I_n\=\bigcup_{\theta \in \I_n}B_{\eps_{n}/2}(\theta)$, where
$B_{r}(\theta)$ denotes the open ball of radius $r$ centred at $\theta$. Set
\begin{align*}
 \tilde \Omega_\infty \= \bigcap_{j=1}^\infty \bigcup_{k=j}^\infty \bigcup_{l=M_{k-1}}^{2K_kM_k} \tilde \I_k+l\w.
\end{align*}
\begin{lem}\label{lem: theta NOT in tilde Omega infty}
Suppose $\theta\notin\tilde \Omega_\infty$. Then there exists $j_0\in\N$ such that for all integers $j\geq j_0$ we have
$\theta \in \Omega_j$ and
\begin{align}\label{eq: almost every point in a neighbourhood of Omegaj is in Omegaj}
 \operatorname{Leb}_{\T^d}(B_{\eps_n/2}(\theta)\cap\Omega_j)/\operatorname{Leb}_{\T^d}(B_{\eps_n/2}(\theta))\to 1,
\end{align}
for $n\to \infty$.
\end{lem}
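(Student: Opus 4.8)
The plan is to turn the hypothesis $\theta\notin\tilde\Omega_\infty$ into a uniform lower bound for the distance of $\theta$ to all peaks of sufficiently high level, and then to play the super-exponential decay of the radii $\eps_k$ off against the much slower shrinking of the balls $B_{\eps_n/2}(\theta)$. First I would unravel the definition: from $\theta\notin\tilde\Omega_\infty=\bigcap_{j\geq1}\bigcup_{k\geq j}\bigcup_{l=M_{k-1}}^{2K_kM_k}(\tilde\I_k+l\w)$ one gets some $j_0\in\N$ with $\theta\notin\tilde\I_k+l\w$ for all $k\geq j_0$ and all $M_{k-1}\leq l\leq 2K_kM_k$; since $\tilde\I_k+l\w$ is the $\eps_k/2$-neighbourhood of $\I_k+l\w$, this says exactly $d(\theta,\I_k+l\w)\geq\eps_k/2$ for all such $k,l$. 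In particular $\theta\notin\I_k+l\w$ for all such $k,l$, so $\theta\in\Omega_j$ for every $j\geq j_0$; this gives the first assertion, and it remains to establish \eqref{eq: almost every point in a neighbourhood of Omegaj is in Omegaj} for a fixed $j\geq j_0$.

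For this I would localise. Writing $B_n\=B_{\eps_n/2}(\theta)$, one has $B_n\setminus\Omega_j=\bigcup_{k\geq j}\bigcup_{l=M_{k-1}}^{2K_kM_k}(B_n\cap(\I_k+l\w))$ for $n\geq j$. The key observation is that the levels $j\leq k\leq n$ contribute nothing: since each $K_k\geq2$, the sequence $(M_k)$ is strictly increasing and hence $(\eps_k)$ is decreasing, so $d(\theta,\I_k+l\w)\geq\eps_k/2\geq\eps_n/2$, i.e.\ the open ball $B_n$ of radius $\eps_n/2$ is disjoint from $\I_k+l\w$. Only the finer levels $k>n$ survive, and there I would estimate crudely using $\Leb_{\T^d}(\I_k+l\w)=\Leb_{\T^d}(\I_k)\leq V_d\eps_k^d$, the bound $2K_kM_k\eps_k^d<\eps_k^{d/2}$ (valid for large $k$, cf.\ the line preceding \eqref{eq: measure of Omegaj}), and the fact that $\eps_{k+1}\leq\eps_k/2$ for $k$ large, to obtain $\Leb_{\T^d}(B_n\setminus\Omega_j)\leq\sum_{k>n}2K_kM_kV_d\eps_k^d\leq C\,\eps_{n+1}^{d/2}$ for some constant $C$ and all sufficiently large $n$.

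It then remains to compare this with $\Leb_{\T^d}(B_n)=V_d(\eps_n/2)^d$ (valid once $\eps_n/2<\tfrac12$, so that the ball does not wrap around $\T^d$), which reduces the whole statement to showing that $\eps_{n+1}^{d/2}/\eps_n^d\to0$. Inserting $\eps_k=c_0\alpha^{-M_{k-1}b/(2p)}$ gives $\eps_{n+1}^{d/2}/\eps_n^d=c_0^{-d/2}\alpha^{-\frac{bd}{2p}(M_n/2-M_{n-1})}$, and since $M_n\geq K_{n-1}M_{n-1}$ with $K_{n-1}=K_0\kappa^{n-1}\to\infty$ one has $M_n/2-M_{n-1}\geq M_{n-1}(K_{n-1}/2-1)\to\infty$; as $\alpha>1$ this forces the ratio to $0$, whence $\Leb_{\T^d}(B_n\cap\Omega_j)/\Leb_{\T^d}(B_n)\to1$. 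The only step that is more than bookkeeping is the localisation in the second paragraph: the $\eps_k/2$-fattening in the definition of $\tilde\Omega_\infty$ is calibrated precisely so that a point avoiding all the fattened peaks $\tilde\I_k+l\w$ ($k\geq j_0$) has the property that the ball $B_{\eps_n/2}(\theta)$ misses \emph{every} peak of level at most $n$; after that the error is governed solely by the rapidly shrinking, summable family of finer peaks, and one is back to the volume estimate already carried out for \eqref{eq: measure of Omegaj}, now restricted to a single ball.
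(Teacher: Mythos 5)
Your proof is correct and follows the same route as the paper: extract $j_0$ from the hypothesis, note that the $\eps_k/2$-fattening guarantees $B_{\eps_n/2}(\theta)$ avoids all the removed sets of levels $j\leq k\leq n$ (since $\eps_k\geq\eps_n$), reduce the error to the tail $k>n$, and estimate that tail as in \eqref{eq: measure of Omegaj}. The only difference is that you explicitly carry out the final comparison $\eps_{n+1}^{d/2}/\eps_n^d\to0$, which the paper leaves to the reader; your computation there is also correct.
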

\begin{proof}
By the assumptions, there is $j_0\in\N$ such that
$\theta \notin \bigcup_{k=j_0}^\infty\bigcup_{l=M_{k-1}}^{2K_kM_k} \tilde \I_k+l\w$. Fix an arbitrary $j\geq j_0$ and observe that
\begin{align*}
B_{\eps_n/2}(\theta)\cap \left(\bigcup_{k=j}^n\bigcup_{l=M_{k-1}}^{2K_kM_k}\I_k+l\w\right)
=\emptyset
\end{align*}
for $n\geq j$ by definition of $\tilde \I_k$.
Thus,
\begin{align*}
B_{\eps_n/2}(\theta)\cap\Omega_j=B_{\eps_n/2}(\theta)\setminus\bigcup_{k=j}^\infty\bigcup_{l=M_{k-1}}^{2K_kM_k}\I_k+l\w=B_{\eps_n/2}(\theta)\setminus\bigcup_{k=n+1}^\infty\bigcup_{l=M_{k-1}}^{2K_kM_k}\I_k+l\w.
\end{align*}
Similarly as in \eqref{eq: measure of Omegaj}, we get
$\operatorname{Leb}_{\T^d}\left(\bigcup_{k=n+1}^\infty\bigcup_{l=M_{k-1}}^{2K_kM_k}\I_k+l\w\right)<\sum_{k=n+1}^\infty V_{d}\eps_k^{d/2}$ for large enough $n$, where $V_{d}$ normalises the Lebesgue measure.
\end{proof}

\begin{lem}\label{lem: theta in tilde Omega infty}
 Suppose $\theta \in \tilde \Omega_\infty$. For each $\ell \in \N$, there are arbitrarily large $j$ such that
\begin{align}\label{eq: theta in tilde omega infty}
B_{\eps_{j}/2}(\theta) \ssq \Omega_{j+1}^{2K_{j+\ell}M_{j+\ell}}
\end{align}
and
\begin{align}\label{eq: theta in tilde omega infty then big neighbourhood is in omega j}
\operatorname{Leb}_{\T^d} \left(B_{\eps_{j}/2}(\theta)\right)- \operatorname{Leb}_{\T^d} \left(B_{\eps_{j}/2}(\theta)\cap \Omega_{j+1}\right) <\eps_{j+\ell}.
\end{align}
\end{lem}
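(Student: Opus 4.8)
Here is how I would approach the proof.

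The plan is to exploit that, since $\theta\in\tilde\Omega_\infty$, there are arbitrarily large \emph{recurrence scales} $k$, i.e.\ scales for which $\theta\in\tilde\I_k+l_0\w$ for some $l_0\in[M_{k-1},2K_kM_k]$, and to show that for every such $k$ large enough the choice $j\=k-\ell-3$ works. Throughout one uses the nestedness $\I_k\ssq\dots\ssq\I_0$ (so that $\I_k\ssq\I_m$ and $\eps_k\le\eps_m$ whenever $m\le k$) and the size bound $|\I_m|<\eps_m$ provided by $(\mathcal E)_m$.

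First I would unwind the combinatorics of $\Omega_{j+1}^{n}$ with $n\=2K_{j+\ell}M_{j+\ell}$. Only the finitely many scales $m\in\{j+1,\dots,j+\ell+2\}$ contribute to its deleted set: for $m\ge j+\ell+3$ one has $M_{m-1}\ge M_{j+\ell+2}\ge K_{j+\ell+1}K_{j+\ell}M_{j+\ell}>n$ (using that $K_{j+\ell+1}>2$ for $j$ large), so the index range $[M_{m-1},\min\{n,2K_mM_m\}]$ is empty. Consequently every deleted pair $(m,l)$ satisfies $M_j\le l\le n$, whereas $l_0\ge M_{k-1}=M_{j+\ell+2}>n$; thus $l_0$ is never a deleted time and, in particular, $l\ne l_0$ for every deleted pair.

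The core of the argument is \eqref{eq: theta in tilde omega infty}, which I would prove by contradiction. Suppose $z\in B_{\eps_j/2}(\theta)\cap(\I_m+l\w)$ for some deleted pair $(m,l)$. Then $d(\theta-l\w,\I_m)\le d(\theta,z)<\eps_j/2$, while $d(\theta-l_0\w,\I_m)\le d(\theta-l_0\w,\I_k)<\eps_k/2<\eps_j/2$ by nestedness; since $|\I_m|<\eps_m<\eps_j$, these combine to $d\big((l_0-l)\w,0\big)=d(\theta-l\w,\theta-l_0\w)<2\eps_j$. As $0<l_0-l\le 2K_kM_k$, the Diophantine property of $\w$ forces $d\big((l_0-l)\w,0\big)\ge\mathscr C(2K_kM_k)^{-\eta}$, so that $\mathscr C(2K_kM_k)^{-\eta}<2c_0\alpha^{-M_{j-1}b/(2p)}$. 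This is where I expect the main obstacle to lie: the inequality must be shown to be impossible for $j$ (equivalently $k=j+\ell+3$) large, and this is a growth comparison, namely the left-hand side decays only like $\exp(-O(k^2))=\exp(-O(j^2))$, whereas the right-hand side decays like $\exp(-\mathrm{const}\cdot M_{j-1})$ with $M_{j-1}\ge K_0^{j-1}\kappa^{(j-1)(j-2)/2}M_0$ growing super-exponentially in $j$, so the latter is eventually far smaller; one should be careful to track the constants $\mathscr C,\eta,c_0,\alpha,b,p,K_0,\kappa$. The resulting contradiction gives $B_{\eps_j/2}(\theta)\cap(\I_m+l\w)=\emptyset$ for every deleted pair, i.e.\ $B_{\eps_j/2}(\theta)\ssq\Omega_{j+1}^{n}$.

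The estimate \eqref{eq: theta in tilde omega infty then big neighbourhood is in omega j} should then follow from \eqref{eq: theta in tilde omega infty} without any further recurrence input. Indeed \eqref{eq: theta in tilde omega infty} yields $B_{\eps_j/2}(\theta)\setminus\Omega_{j+1}=B_{\eps_j/2}(\theta)\cap(\Omega_{j+1}^{n}\setminus\Omega_{j+1})$, and any point of $\Omega_{j+1}^{n}\setminus\Omega_{j+1}$ lies in some $\I_m+l\w$ with $2K_mM_m>n$, hence with $m\ge j+\ell+1$; therefore $\Omega_{j+1}^{n}\setminus\Omega_{j+1}\ssq\T^d\setminus\Omega_{j+\ell+1}$ and
\[
\Leb_{\T^d}\big(B_{\eps_j/2}(\theta)\setminus\Omega_{j+1}\big)\ \le\ \Leb_{\T^d}\big(\T^d\setminus\Omega_{j+\ell+1}\big)\ \le\ \sum_{m\ge j+\ell+1}2K_mM_m\,V_d\,\eps_m^{\,d}\ <\ \sum_{m\ge j+\ell+1}V_d\,\eps_m^{\,d/2},
\]
exactly as in \eqref{eq: measure of Omegaj}; this last sum is $<\eps_{j+\ell}$ for $j$ large because $\eps_m$ is exponentially small in $M_{m-1}$ while $M_m\ge K_{m-1}M_{m-1}$ with $K_{m-1}\to\infty$. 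Finally, letting $k$ run through the unbounded set of recurrence scales of $\theta$ produces arbitrarily large admissible $j=k-\ell-3$, which completes the plan.
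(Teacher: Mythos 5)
Your proof is correct and takes essentially the same route as the paper's: fix a recurrence scale of $\theta$ in $\tilde\Omega_\infty$, use the Diophantine lower bound on return times to show $B_{\eps_j/2}(\theta)$ avoids the relevant translates of critical regions, exploit the super-exponential growth of $M_n$ for the final contradiction, and then deduce the measure estimate via the same computation as in \eqref{eq: measure of Omegaj}. The one organizational difference is that the paper takes $j$ to be the recurrence scale $j_n$ itself and then determines how far $n$ can be pushed (defining $J_n$ and showing $j_n/J_n\to 0$), whereas you offset $j:=k-\ell-3$ below the recurrence scale $k$; your choice has the minor advantage that the recurrence time $l_0\geq M_{k-1}>2K_{j+\ell}M_{j+\ell}$ automatically lies beyond the deleted range, so the coincidence $l=l_0$ is ruled out at once, a point the paper's version treats less explicitly.
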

\begin{proof}
For $n\in \N$, we define
\begin{align*}
 j_n \= 
\max\left\{p\in \N_{0}\:\exists l \in \left[M_{p-1},\min\left\{n,2K_p M_p\right\}\right] \text{ such that } \theta\in\tilde\I_p+l\w \right\}
\end{align*}
and let $l_n \in\left[M_{j_n-1}, 2K_{j_n}M_{j_n}\right]$ be the corresponding time such that $\theta\in\tilde\I_{j_n}+l_n\w$,
where uniqueness is guaranteed by $(\mc F1)_{j_n}$. Note that $j_n$ and $l_n$ are well-defined for sufficiently large $n$ and $j_n \stackrel{n\to \infty}{\longrightarrow}\infty$ because $\theta\in \tilde\Omega_\infty$.

Further, let $\theta_\ast \in \bigcap_{n=0}^\infty \I_n$.
Note that
$d(\theta_\ast+l\w,\theta)<\frac32 \eps_{j_n}$ for all $l$ for which $\theta\in \tilde \I_{j_n}+l\w$. Now, suppose there is $k\in \N$ such that $\theta\in\tilde\I_{j_n}+l_n\w+k\w$.
Then 
\begin{align*}
d\left(k\w,0\right)= d\left(\theta_\ast+(l_n+k)\w,\theta_\ast+l_n\w\right)\leq
d\left(\theta_\ast+(l_n+k)\w,\theta\right)+ d\left(\theta,\theta_\ast+l_n\w\right)<3 \eps_{j_n}.
\end{align*}
As $\w$ is Diophantine, this means $\mathscr C |k|^{-\eta}<d(k\w,0)<3 \eps_{j_n}$ and hence 
\begin{align}\label{eq: diophantine implies long return times}
 |k|>\tilde c \eps_{j_n}^{-1/\eta},
\end{align}
where $\tilde c>0$. Define
\begin{align*}
 J_n\=\max\left\{N\:2K_NM_N< \tilde c \eps_{j_n}^{-1/\eta}\right\}.
\end{align*}
By \eqref{eq: diophantine implies long return times}, we have
\begin{align*}
 B_{\eps_{j_n}/2}(\theta)\ssq\Omega_{j_n+1}^{2K_{J_n}M_{J_n}}.
\end{align*}
Since $j_n/J_n \stackrel{n\to \infty}{\longrightarrow} 0$,
we have thus shown that for any $\ell\in \N$ there is arbitrarily large $j$ such that $B_{\eps_j/2}(\theta) \ssq \Omega_{j+1}^{2K_{j+\ell}M_{j+\ell}}$.

Given $\ell \in \N$, assume $j$ is such that \eqref{eq: theta in tilde omega infty} holds. Then,
\begin{align*}
B_{\eps_{j}/2}(\theta)\cap\Omega_{j+1}=B_{\eps_{j}/2}(\theta)\setminus\bigcup_{k=j+1}^\infty\bigcup_{l=M_{k-1}}^{2K_kM_k}\I_k+l\w=B_{\eps_{j}/2}(\theta)\setminus\bigcup_{k=j+\ell+1}^\infty\bigcup_{l=M_{k-1}}^{2K_kM_k}\I_k+l\w.
\end{align*}
Finally,
$\operatorname{Leb}_{\T^d}\left(\bigcup_{k=j+\ell+1}^\infty\bigcup_{l=M_{k-1}}^{2K_kM_k}\I_k+l\w\right)<\sum_{k=j+\ell+1}^\infty V_{d}\eps_k^{d/2}< \eps_{j+\ell}$ for large enough $j$.
\end{proof}
\begin{cor}\label{cor: dist within U and boundary U is zero}
Let $f\in \mc V$.
If $\phi=\phi^+$ a.e. and $\phi$ is an upper semi-continuous invariant graph, then $\phi=\phi^+$. In other words, $\phi^+$ is the unique upper semi-continuous invariant graph in its equivalence class.
Further,
\begin{align}\label{eq: pseudo continuity of phi +}
  \phi^+\left(\overline{
      B_r(\theta)}\right)\ssq\overline{\phi^+\left( B_r(\theta)\right)},
\end{align}
for all $\theta \in \T^d$ and all $r>0$.
\end{cor}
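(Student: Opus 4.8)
\emph{Plan.} Throughout, $\phi$ is an invariant graph with values in $\Gamma$, so that $\Phi=f^n(\Phi)\ssq f^n(\Gamma)$ for every $n\in\Z$ and hence $\Phi\ssq\Lambda=[\hat\phi^-,\hat\phi^+]$; in particular $\phi\leq\phi^+$ everywhere (this also follows directly from $\phi(\theta)=f^n_{\theta-n\w}(\phi(\theta-n\w))\leq f^n_{\theta-n\w}(1)=\phi^+_n(\theta)$ by \eqref{eq:4} and \eqref{e.monotonicity}, letting $n\to\infty$). So the uniqueness statement reduces to proving $\phi(\theta_0)\geq\phi^+(\theta_0)$ for every $\theta_0\in\T^d$. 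Set $G=\{\theta\mid\phi(\theta)=\phi^+(\theta)\}$, a set of full Lebesgue measure. Since $\phi$ is upper semi-continuous and agrees with $\phi^+$ on $G$, it suffices to produce a sequence $(\theta_k)$ in $G$ with $\theta_k\to\theta_0$ and $\liminf_k\phi^+(\theta_k)\geq\phi^+(\theta_0)$: then $\phi(\theta_0)\geq\limsup_k\phi(\theta_k)=\limsup_k\phi^+(\theta_k)\geq\phi^+(\theta_0)$. I distinguish whether or not $\theta_0\in\tilde\Omega_\infty$.

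\emph{Case $\theta_0\notin\tilde\Omega_\infty$.} By Lemma~\ref{lem: theta NOT in tilde Omega infty} there is $j_0$ with $\theta_0\in\Omega_j$ for all $j\geq j_0$, and $\Leb_{\T^d}(B_{\eps_n/2}(\theta_0)\cap\Omega_j)/\Leb_{\T^d}(B_{\eps_n/2}(\theta_0))\to1$. Fix such a $j$ large enough that Proposition~\ref{prop: properties iterated boundary lines}\eqref{prop: properties iterated boundary lines iii} applies; then $\phi^+_n|_{\Omega_j}$ is Lipschitz with a constant $L_j$ independent of $n$, hence so is $\phi^+|_{\Omega_j}=\lim_n\phi^+_n|_{\Omega_j}$. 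By the density statement, $\Omega_j\cap G$ meets every $B_{\eps_n/2}(\theta_0)$, so (as $\eps_n\to0$) we may choose $\theta_k\in\Omega_j\cap G$ with $\theta_k\to\theta_0$; by Lipschitz continuity of $\phi^+$ on $\Omega_j$ we get $\phi^+(\theta_k)\to\phi^+(\theta_0)$, which is more than enough.

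\emph{Case $\theta_0\in\tilde\Omega_\infty$.} Fix $\delta>0$. Choose $\ell\in\N$ large (depending only on $\delta$, $d$ and the structural constants) and apply Lemma~\ref{lem: theta in tilde Omega infty}: for arbitrarily large $j$ we have $B_{\eps_j/2}(\theta_0)\ssq\Omega_{j+1}^{N}$ with $N:=2K_{j+\ell}M_{j+\ell}$, cf.\ \eqref{eq: theta in tilde omega infty}, and $\Leb_{\T^d}(B_{\eps_j/2}(\theta_0))-\Leb_{\T^d}(B_{\eps_j/2}(\theta_0)\cap\Omega_{j+1})<\eps_{j+\ell}$, cf.\ \eqref{eq: theta in tilde omega infty then big neighbourhood is in omega j}. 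By Proposition~\ref{prop: properties iterated boundary lines}\eqref{prop: properties iterated boundary lines iii}, $\phi^+_N$ is $L_{j+1}$-Lipschitz on $B_{\eps_j/2}(\theta_0)$; by Proposition~\ref{prop: properties iterated boundary lines}\eqref{prop: properties iterated boundary lines ii}, for $\theta\in\Omega_{j+1}$ one has $|\phi^+(\theta)-\phi^+_N(\theta)|\leq\sum_{n>N}\alpha^{-\lam(n-1)}=:c\,\alpha^{-\lam N}$. The point of taking $\ell$ large is that inside $B_{\eps_j/2}(\theta_0)$ the only sets removed from $\Omega_{j+1}$ are translates of $\I_k$ with $k\geq j+\ell+1$, of diameter $<\eps_{j+\ell+1}$ and total measure $<\eps_{j+\ell}$; since the $\eps_k$ decay super-exponentially while $L_{j+1}\leq\eps_{j+1}^{-CK_j}$, one has $\eps_{j+\ell}\ll(\delta/L_{j+1})^d$ for $\ell$ large, so $\Omega_{j+1}\cap G$ has density close to $1$ in $B_{\delta/L_{j+1}}(\theta_0)$. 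Pick $\theta'\in\Omega_{j+1}\cap G$ with $d(\theta',\theta_0)<\delta/L_{j+1}$; then (using $\phi^+_N(\theta_0)\geq\phi^+(\theta_0)$) $\phi^+(\theta')\geq\phi^+_N(\theta')-c\,\alpha^{-\lam N}\geq\phi^+_N(\theta_0)-L_{j+1}\,d(\theta',\theta_0)-c\,\alpha^{-\lam N}\geq\phi^+(\theta_0)-\delta-c\,\alpha^{-\lam N}$. Letting $j\to\infty$ (so $d(\theta',\theta_0)<\delta/L_{j+1}\to0$ and $\alpha^{-\lam N}\to0$) and then $\delta\to0$, a diagonal choice yields the required sequence in $G$ converging to $\theta_0$.

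\emph{The estimate \eqref{eq: pseudo continuity of phi +}, and the main obstacle.} Given $\theta,r$ and $\theta'\in\overline{B_r(\theta)}$, one runs the very same dichotomy for $\theta_0:=\theta'$, but now additionally constraining the approximating points to the open ball $B_r(\theta)$; this is harmless since $B_r(\theta)$ has Lebesgue density $\geq1/2$ at $\theta'$, so the sets $\Omega_j\cap B_r(\theta)$, resp.\ $\Omega_{j+1}\cap B_r(\theta)$, still have near-full density at $\theta'$ in the relevant balls, and the construction produces $\theta_k\in B_r(\theta)$ with $\theta_k\to\theta'$ and $\liminf_k\phi^+(\theta_k)\geq\phi^+(\theta')$; upper semi-continuity of $\phi^+$ gives the matching bound $\limsup_k\phi^+(\theta_k)\leq\phi^+(\theta')$, hence $\phi^+(\theta_k)\to\phi^+(\theta')$ and $\phi^+(\theta')\in\overline{\phi^+(B_r(\theta))}$. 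The genuinely delicate point is the second case above: the Lipschitz constants $L_{j+1}$ of the iterated boundary lines blow up with $j$, so the approximating point must be located at the much finer scale $\delta/L_{j+1}$, and one must verify that $\Omega_{j+1}$ (hence $G$) still fills almost all of a ball of that radius about $\theta_0$ — which is exactly what the quantitative estimate \eqref{eq: theta in tilde omega infty then big neighbourhood is in omega j}, combined with the rapid decay of the scales $\eps_k$, delivers once the auxiliary parameter $\ell$ is chosen large enough.
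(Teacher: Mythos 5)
Your proof is correct and follows essentially the same strategy as the paper: you run the dichotomy $\theta_0\in\tilde\Omega_\infty$ versus $\theta_0\notin\tilde\Omega_\infty$, invoke Lemmas~\ref{lem: theta NOT in tilde Omega infty} and~\ref{lem: theta in tilde Omega infty} together with Proposition~\ref{prop: properties iterated boundary lines} to locate approximating points in $\Omega_j$ (resp.~$\Omega_{j+1}$) at the correct scale $\delta/L_{j+1}$, and deduce both the pseudo-continuity \eqref{eq: pseudo continuity of phi +} and the uniqueness of the semi-continuous representative from the resulting sequences. The only cosmetic differences from the paper's argument are that you prove uniqueness first and derive \eqref{eq: pseudo continuity of phi +} as a variant, and that in the second case you use the one-sided bound $\phi^+_N\geq\phi^+$ rather than a two-sided triangle inequality; neither changes the substance.
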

\begin{proof}
We first show \eqref{eq: pseudo continuity of phi +}. 
Let $\theta \in \T^d$ and $r>0$ be given and let $\theta_0 \in \d B_r(\theta)=\overline{B_r(\theta)}\setminus B_r(\theta)$.

Consider the case where $\theta_0 \notin \tilde \Omega_\infty$ and let $j$ be as
in Lemma~\ref{lem: theta NOT in tilde Omega infty}.  Equation~\eqref{eq: almost every point in a neighbourhood of Omegaj is in Omegaj} yields that for every $\rho>0$ there
is $\theta'\in B_r(\theta) \cap B_\rho(\theta_0)$ such that
$\theta'\in \Omega_j$.  Without loss of generality we may assume that
$j$ is large enough so that Proposition~\ref{prop: properties iterated
  boundary lines}\eqref{prop: properties iterated boundary lines iii}
gives
\begin{align*}
 \left|\phi^+_n(\theta_0)-\phi^+_n(\theta')\right|\leq L_j d(\theta_0,\theta')
\end{align*}
for arbitrary $n$ and thus
$\left|\phi^+(\theta_0)-\phi^+(\theta')\right|\leq L_j
d(\theta_0,\theta')\leq L_j \rho$ as $\phi^+_n \to \phi^+$ point-wise.
Sending $\rho$ to zero proves the statement in the case $\theta_0
\notin \tilde \Omega_\infty$.

Now, suppose $\theta_0 \in \tilde \Omega_\infty$ and let $\delta>0$.
Lemma~\ref{lem: theta in tilde Omega infty} yields that 
there is arbitrarily large $j\in \N$ such that $\theta_0
\in \Omega_j^{2K_{j+2}M_{j+2}}$. For sufficiently large $j$, equation \eqref{eq: theta in tilde omega infty then big
  neighbourhood is in omega j} gives $B_r(\theta)\cap B_{\delta
  \eps_{j}^{CK_{j-1}}}(\theta_0) \cap \Omega_{j}\neq \emptyset$, where
we may choose $C$ such that $L_{j}\leq\eps_{j}^{-CK_{j-1}}$ (see
Proposition~\ref{prop: properties iterated boundary lines} \eqref{prop:
  properties iterated boundary lines iii}).  Let $\theta'\in
B_r(\theta)\cap B_{\delta \eps_{j}^{CK_{j-1}}}(\theta_0) \cap
\Omega_{j}$.  Then
$\left|\phi^+_{2K_jM_j}(\theta_0)-\phi^+_{2K_jM_j}(\theta')\right|\leq\delta$
by Proposition~\ref{prop: properties iterated boundary lines} \eqref{prop: properties iterated boundary lines iii}.
Without loss of generality we may further assume that $j$ is large
enough to ensure
$\left|\phi^+(\theta_0)-\phi^+_{2K_jM_j}(\theta_0)\right|\leq\delta$ and
$\sum_{k=2K_jM_j}^\infty\alpha^{-\lam k}\leq\delta$, for $\lam$ as in Proposition~\ref{prop: properties iterated boundary lines}\eqref{prop: properties iterated boundary lines ii}.  This eventually
gives
\begin{align*}
 \left|\phi^+(\theta_0)-\phi^+(\theta')\right|\leq
\left|\phi^+(\theta_0)-\phi^+_{2K_jM_j}(\theta_0)\right|
+\left|\phi^+_{2K_jM_j}(\theta_0)-\phi^+_{2K_jM_j}(\theta')\right|
+\left|\phi^+_{2K_jM_j}(\theta')-\phi^+(\theta')\right|\leq3\delta, 
\end{align*}
where we used Proposition~\ref{prop: properties iterated boundary
  lines}\eqref{prop: properties iterated boundary lines ii} (again, assuming large enough $j$) to estimate
the last term.

Given arbitrary $\theta\in \T^d$ and $r>0$, we have thus shown that for each $\theta_0 \in \d B_r(\theta)$ there is a sequence $\theta_n\stackrel{n\to \infty}{\longrightarrow}\theta_0$ within $B_r(\theta)$ such that $\phi^+(\theta_0)=\lim_{n\to\infty} \phi^+(\theta_n)$. Hence, \eqref{eq: pseudo continuity of phi +} holds. In fact, the construction shows that even if $\phi=\phi^+$ only \emph{almost} everywhere, we still find a sequence ${\tilde\theta}_n\stackrel{n\to \infty}{\longrightarrow}\theta_0$ within $B_r(\theta)$ such that $\phi({\tilde\theta}_n)=\phi^+({\tilde\theta}_n)\stackrel{n\to \infty}{\longrightarrow}\phi^+(\theta_0)$. Thus, if $\phi$ is upper semi-continuous, this necessarily yields $\phi\geq \phi^+$. On the other hand, if $\phi$ is invariant, its graph is contained entirely within the maximal invariant set $\Lambda$ so that $\phi\leq\phi^+$. Thus, $\phi=\phi^+$.
\end{proof}
\begin{figure}
\centering 
\includegraphics[scale=1.3]{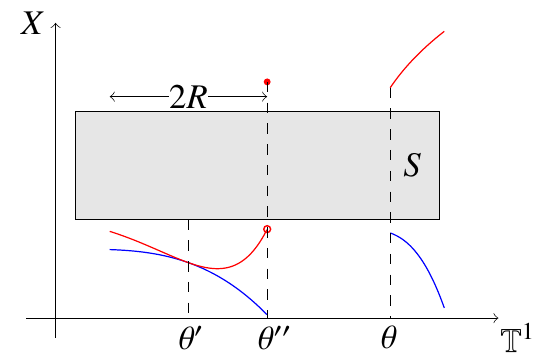}
\caption{The $1$-dimensional case: Assuming a gap within the minimal set implies the existence of a point $(\theta'',\phi^+(\theta''))$ which is isolated from one side (here, from the left). This contradicts Corollary~\ref{cor: dist within U and boundary U is zero}.}
\label{fig: filled in idea}
\end{figure}
Given an $f$-invariant and closed set $B\ssq \T^d\times \X$,
the associated \emph{upper} and \emph{lower bounding graphs}
\begin{align*}
 \phi^{+}_B(\theta)\=\sup\{x\: (\theta,x)\in B\} \quad \text{and} \quad
\phi^{-}_B(\theta)\=\inf\{x\: (\theta,x)\in B\} 
\end{align*}
are invariant graphs, where $\phi^+_B$ is upper semi-continuous and
$\phi^-_B$ is lower semi-continuous.
Vice versa, continuity of $f$
straightforwardly gives that the topological closure of an invariant
graph $\Phi$ is a closed invariant set. Further, if $\phi$ is upper
(lower) semi-continuous, then it equals the corresponding upper
(lower) bounding graph: $\phi=\phi^+_{\overline \Phi}$
($\phi=\phi^-_{\overline \Phi}$) (see \cite[Corollary~1 \& 2]{Stark}).
\begin{rem}
For the proof of the next statement, it is important to note that due to the non-zero Lyapunov exponents there is no lower
and upper semi-continuous invariant graph that coincides almost everywhere with $\phi^+$ 
and $\phi^-$, respectively (cf. \cite[Lemma~3.2]{jaeger:2003}). 
\end{rem}
\begin{thm}\label{thm: filled-in} Let $f\in \mc V$. Then
 $\left[\phi^-,\phi^+\right]$ is minimal. As a consequence, $D_B(\phi^-)=D_B(\phi^+)=d+1$.
\end{thm}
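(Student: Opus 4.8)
The plan is to argue by contradiction. Suppose $[\phi^-,\phi^+]=\Lambda$ is not minimal. Since $\Lambda$ is compact and $f$-invariant, it then contains a proper closed invariant subset $B\subsetneq\Lambda$. Consider its bounding graphs $\phi^+_B\le\phi^+$ and $\phi^-_B\ge\phi^-$; as recalled just before the theorem, $\phi^+_B$ is upper semi-continuous and $\phi^-_B$ is lower semi-continuous, and both are invariant. Because the Lyapunov exponents of $\phi^+$ and $\phi^-$ are non-zero (the remark preceding the theorem, via \cite[Lemma~3.2]{jaeger:2003}), there is no semi-continuous invariant graph coinciding a.e.\ with $\phi^+$ or with $\phi^-$; hence $\phi^+_B<\phi^+$ on a set of positive measure, or $\phi^-_B>\phi^-$ on a set of positive measure (otherwise $\phi^+_B=\phi^+$ a.e.\ and $\phi^-_B=\phi^-$ a.e., forcing $B=\Lambda$). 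Treating the $\phi^+$ case (the other is symmetric via $f^{-1}$), one obtains a nonempty open set $U\subseteq\T^d$ on which $\phi^+_B(\theta)<\phi^+(\theta)$, i.e.\ an honest \emph{gap} $(\phi^+_B(\theta),\phi^+(\theta))$ in $B$ below the graph of $\phi^+$, opening up over a region of the base.

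The core of the argument is then the ``filled-in'' mechanism illustrated in Figure~\ref{fig: filled in idea}: a gap in a closed invariant set, combined with the approximate continuity statement \eqref{eq: pseudo continuity of phi +} of Corollary~\ref{cor: dist within U and boundary U is zero}, is contradictory. Concretely, I would look at the boundary of the region where the gap is present. Pushing the open set $U$ forward and backward under the base rotation and using that $\T^d$ is connected while $B$ is closed, one locates a parameter $\theta''\in\partial B_r(\theta)$ for a suitable ball such that the point $(\theta'',\phi^+(\theta''))$ lies on $\phi^+$ but is \emph{isolated from one side} within $\mathrm{cl}(\hat\Phi^+)$ — there is a one-sided neighbourhood $B_r(\theta)$ of $\theta''$ over which $\phi^+$ stays uniformly bounded away from $\phi^+(\theta'')$. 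But Corollary~\ref{cor: dist within U and boundary U is zero} provides a sequence $\theta_n\to\theta''$ inside $B_r(\theta)$ with $\phi^+(\theta_n)\to\phi^+(\theta'')$, contradicting the uniform separation. This yields minimality of $\Lambda$.

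For the box-counting dimension statement: by minimality, $\Lambda=\mathrm{cl}(\hat\Phi^+)=\mathrm{cl}(\hat\Phi^-)$, and since the box-counting dimension is invariant under taking closures, $D_B(\phi^+)=D_B(\phi^-)=D_B(\Lambda)$. It therefore suffices to show $D_B(\Lambda)=d+1$. Since $\phi^-$ and $\phi^+$ are genuinely distinct graphs (Theorem~\ref{t.saddle-node}(iii)($\mathcal N$): they agree only on a residual, hence measure-zero-complement-free but Lebesgue-null-in-the-bad-direction\ldots actually they differ on a set of positive measure by the non-smoothness), the region $\Lambda=[\phi^-,\phi^+]$ contains, for some nonempty open $V\subseteq\T^d$ and some $\delta>0$, the full product $V\times J$ for an interval $J$ of length $\delta$ sitting between $\phi^-$ and $\phi^+$ over $V$; more carefully, $\int_{\T^d}(\phi^+-\phi^-)\,d\theta>0$ so $\Lambda$ has positive $(d+1)$-dimensional Lebesgue measure. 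A set of positive Lebesgue measure in $\R^{d+1}$ has lower box-counting dimension $d+1$, and trivially $D_B(\Lambda)\le d+1$ since $\Lambda\subseteq\T^d\times X\subseteq\R^{d+1}$; hence $D_B(\Lambda)=d+1$.

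The main obstacle is the first part — extracting a genuine one-sided isolated point of $\mathrm{cl}(\hat\Phi^+)$ from the mere existence of a proper closed invariant subset. One must argue that a gap cannot ``float'' but must have a boundary in the base direction where it pinches shut, and that at such a boundary point the upper graph becomes one-sidedly isolated; keeping track of which side the isolation occurs on (as in Figure~\ref{fig: filled in idea}) and ensuring \eqref{eq: pseudo continuity of phi +} applies on precisely that side requires some care. The topological input is that $\T^d$ is connected and $B$ closed, so the set $\{\theta:\phi^+_B(\theta)=\phi^+(\theta)\}$ and its complement cannot both be open; combined with invariance under the minimal rotation $\theta\mapsto\theta+\omega$, the ``good'' set is either all of $\T^d$ or has empty interior, which is exactly the dichotomy needed. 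The rectifiability-type estimates are not needed here; only Corollary~\ref{cor: dist within U and boundary U is zero} and the non-vanishing of the Lyapunov exponents enter.
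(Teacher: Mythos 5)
There is a genuine logical error at the outset of your argument, and it is precisely the issue you flag at the end as "the main obstacle" — your proposed resolution of it does not work.

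You claim that if neither $\phi^+_B<\phi^+$ nor $\phi^-_B>\phi^-$ holds on a set of positive measure, then $\phi^+_B=\phi^+$ a.e.\ and $\phi^-_B=\phi^-$ a.e., \emph{forcing} $B=\Lambda$. This implication is false and, moreover, is essentially what the theorem is trying to prove: a priori $B$ could equal, say, $\mathrm{cl}(\hat\Phi^+)\cup\mathrm{cl}(\hat\Phi^-)$, which has the same bounding graphs as $\Lambda$ (even everywhere, not just a.e.) yet could be a proper subset. In fact, the reasoning from the remark and Theorem~\ref{t.saddle-node}~($\mathcal N$) runs in the \emph{opposite} direction: $\phi^+_B$ is an upper semi-continuous invariant graph, hence coincides a.e.\ with one of $\phi^\pm$; since no upper semi-continuous graph can agree a.e.\ with $\phi^-$, it must agree a.e.\ with $\phi^+$, and then Corollary~\ref{cor: dist within U and boundary U is zero} upgrades this to $\phi^+_B=\phi^+$ \emph{everywhere} (and analogously $\phi^-_B=\phi^-$ everywhere). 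So your case dichotomy never gets off the ground: the case you say is contradictory ($\phi^\pm_B=\phi^\pm$) is the one that actually occurs, and it is compatible with $B\subsetneq\Lambda$. Your subsequent derivation of an open set $U$ on which $\phi^+_B<\phi^+$ is therefore vacuous (and even if such a set existed on positive measure, openness would not follow, as both graphs are merely upper semi-continuous).

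The paper proceeds differently. Having established $\phi^\pm_M=\phi^\pm$, a point $(\theta,x)\in\Lambda\setminus M$ necessarily \emph{floats} strictly between the two bounding graphs, and by compactness of $M$ one surrounds it by an open strip $S$ disjoint from $M$. The crucial ingredient you omit is the \emph{pinched points} from Theorem~\ref{t.saddle-node}~($\mathcal N$): choosing a pinched point $\theta'$ in the base of $S$, the continuity of $\phi^+$ at pinched points gives a nontrivial ball $B_r(\theta')$ over which $\phi^+$ lies entirely below $S$. Taking the supremal such radius $R$ produces a point $\theta''\in\partial B_R(\theta')$ where $\phi^+$ jumps above $S$ (by upper semi-continuity), contradicting the one-sided approximation property \eqref{eq: pseudo continuity of phi +}. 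Without the pinched points you have no way to manufacture the ball on which $\phi^+$ stays below the strip; your appeal to connectedness of $\T^d$ and minimality of the rotation gives only a density dichotomy for the coincidence set, which is not the needed input.

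Your treatment of the box-counting dimension is correct and matches the paper.
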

\begin{proof}
As $\phi^-$ and $\phi^+$ are lower and upper semi-continuous invariant graphs, respectively, $\left[\phi^-,\phi^+\right]$ is a compact invariant set.
 
For a contradiction, assume $\left[\phi^-,\phi^+\right]$ is not
minimal. Then there is a proper subset
$M\subset\left[\phi^-,\phi^+\right]$ which is compact and invariant.
Theorem~\ref{t.saddle-node} ($\mathcal{N}$) and Corollary~\ref{cor: dist within U and boundary U is zero} as well as the above remark yield
that $\phi_M^\pm=\phi^\pm$. Hence, there have to be $\theta \in \T^d$
and $x\in\left(\phi^-(\theta),\phi^+(\theta)\right)$ with
$(\theta,x)\notin M$.  Since $M$ is compact, there is an open strip
$S\=B_{\eps_1}(\theta_0)\times B_{\eps_2}(x_0)$ with $\eps_1,\eps_2>0$
centred at some $(\theta_0,x_0)\in\T^d\times \X$ such that
$(\theta,x)\in S$ and $S\cap M =\emptyset$.

By Theorem~\ref{t.saddle-node}, we may assume without loss of generality that there is a pinched point $\theta'\in B_{\eps_1}(\theta_0)$ with $\phi^-(\theta')=\phi^+(\theta')\leq x_0-\eps_2$. In other words, $\Phi^-$ and $\Phi^+$ have a common point below $S$. By continuity of $\phi^+$ at the pinched points (see the 
remark below Theorem~\ref{t.saddle-node}), we have that ${\left.\Phi^+\right |_{B_r(\theta')}}\=\Phi^+\cap B_r(\theta')\times[0,1]$ is below $S$ for all small enough $r>0$. Denote by $R$ the supremum of all such $r$ and suppose without loss of generality that $B_R(\theta')\ssq B_{\eps_1}(\theta')$. Then,
${\left.\Phi^+\right |_{B_R(\theta')}}$ is below $S$, while
${\left.\Phi^+\right |_{B_{R+\delta}(\theta')}}$ necessarily contains points above $S$ for each $\delta>0$. Hence, there is $\theta''\in \d B_R(\theta')$ such that
$(\theta'',\phi^+(\theta''))$ is above $S$, contradicting Corollary~\ref{cor: dist within U and boundary U is zero} (cf. Figure~\ref{fig: filled in idea}). This proves the desired
minimality. 

As an immediate consequence, we have $\overline {\phi^-}=\overline {\phi^+}=[\phi^-,\phi^+]$ and so, by the remark in Section~\ref{sec: hausdorff and box-counting Dimension}, $D_B(\phi^-)=D_B(\phi^+)=D_B([\phi^-,\phi^+])$. Since $\phi^-<\phi^+$ a.e., we further have $D_B([\phi^-,\phi^+])=d+1$.
\end{proof}

\section{Proof of Proposition~\ref{prop: properties iterated boundary lines}}
We now turn to the proof of Proposition~\ref{prop: properties iterated boundary lines}. It is based on both the $\mc C^2$-estimates and the dynamical assumptions that define the set $\widehat U_\w$ (see Section~\ref{subsec: basic setting and notation}).

A crucial point is to control the number of times a forward orbit spends in the contracting and a backward orbit spends in the expanding region, respectively.
For $n,N\in \N$ set
\begin{align*}
\mc P_n^N(\theta,x) &\= \# \{l \in [n,N-1]\cap \N_0\:f_{\theta}^l(x) \in C\text{ and } \theta+l\w\notin \I_0\};\\
\mc Q_n^N(\theta,x) &\= \# \{l \in [n,N-1]\cap \N_0\:f_{\theta}^{-l}(x) \in E\text{ and } \theta-l\w\notin \I_0+\w\}.
\end{align*}
The following combinatorial lemmas are important ingredients for this control. Their proofs can be found in \cite{fuhrmann2013NonsmoothSaddleNodesI}. In the following, it is convenient to set $M_{-1}\=0$ (as before) and $\I_{-1}\=\I_0$ as well as $\mc Z^-_{-1},\mc
Z^+_{-1}\=\emptyset$.
\begin{defn}
$(\theta,x)$ verifies $(\mc B1)_n$ and $(\mc B2)_n$, respectively if
\begin{enumerate}[$(\mc B1)_n$]
 \item $x \in C$ and $\theta \notin \mc Z^-_{n-1}$, \label{axiom: B1}
 \item $x \in E$ and $\theta \notin  \mc Z^+_{n-1}$.\label{axiom: B2}
\end{enumerate}
\end{defn}
\begin{lem}[{cf. \cite[Lemma~4.4]{fuhrmann2013NonsmoothSaddleNodesI}}]
\label{lem: duration of stay in contracting/expanding regions}
Let $f\in \mc V$, $n\in \N_0$ and assume $(\theta,x)$ satisfies
$(\mc B1)_n$.
Let $\mc L$ be the first time $l$
such that $\theta+l\w \in \I_n$
and let $0<\mc L_{1}<\ldots <\mc L_{N}=\mc L$ be all those 
times $m\leq \mc L$ for which $\theta+m \w \in \I_{n-1}$. Then
$f^{\mc L_{i}+M_{n-1}+2}(\theta,x)$ satisfies
$(\mc B1)_n$ for each $i=1,\ldots,N-1$ and the following implication holds
\begin{align*}
 f^k_\theta(x) \notin C \Rightarrow \theta+k\w \in \mc V_{n-1} \text{ and } f^k_\theta(x) \in [0,1]
\quad (k = 1,\ldots, \mc L).
\end{align*}
Analogously for backwards iteration: Instead of $(\mc B1)_n$, assume $(\theta,x)$ satisfies $(\mc B2)_n$.
Let $\mc R$ be the first time $r$ 
such that $\theta-r\w \in \I_n+\w$
and let $0<\mc R_{1}<\ldots <\mc R_{N}=\mc R$ be all those 
times $m\leq \mc R$ for which $\theta-m \w \in \I_{n-1}$.
Then $f^{-\mc R_{i}-M_{n-1}}(\theta,x)$ satisfies
$(\mc B2)_n$ for each $i=1,\ldots,N-1$ and the following implication holds
\begin{align*}
 f^{-k}_\theta(x) \notin E \Rightarrow \theta-k\w \in \mc W_{n-1} \text{ and } f_\theta^{-k}(x) \in [0,1] \quad (k= 1,\ldots, \mc R).
\end{align*}
\end{lem}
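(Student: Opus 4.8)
I would argue by induction on $n$, built on the recursive definition of the critical regions $\I_n$ and the separation conditions $(\mc F)_n$. Throughout, the backward assertions are obtained exactly as the forward ones after replacing $f$ by $f^{-1}$ (so that $C\leftrightarrow E$, $\mc V_\bullet\leftrightarrow\mc W_\bullet$, $\mc Z^-_\bullet\leftrightarrow\mc Z^+_\bullet$, with backward iterates well defined by the footnote above), so I only discuss the forward case. The base case $n=0$ is immediate: with the conventions $M_{-1}=0$, $\I_{-1}=\I_0$, $\mc Z^-_{-1}=\mc V_{-1}=\emptyset$, the hypothesis $(\mc B1)_0$ just says $x\in C$, all the $\mc L_i$ equal $\mc L$ (so $N=1$ and the first assertion is vacuous), and the implication reduces to showing $f^k_\theta(x)\in C$ for $1\le k\le\mc L$, which follows by an inner induction on $k$ from \eqref{axiom: 2}, using $C\ssq[e,1]$ and $\theta+(k-1)\w\notin\I_0$ for $k\le\mc L$.

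For the inductive step, assume the lemma at level $n-1$ and let $(\theta,x)$ satisfy $(\mc B1)_n$. Since $\mc Z^-_{n-2}\ssq\mc Z^-_{n-1}$ it also satisfies $(\mc B1)_{n-1}$, and $\theta\notin\mc Z^-_{n-1}$ forces the first visit of $\theta$ to $\I_{n-1}$ to satisfy $\mc L_1\ge M_{n-1}-1$. Setting $\mc L_0\=0$, $\mc L_N\=\mc L$ and $(\theta_i,x_i)\=f^{\mc L_i+M_{n-1}+2}(\theta,x)$, I would establish by a finite induction on $i$: $(a_i)$, for $1\le i\le N-1$, that $(\theta_i,x_i)$ satisfies $(\mc B1)_n$; and $(b_i)$, for $0\le i\le N-1$, that the implication of the lemma holds for $k\in(\mc L_i,\mc L_{i+1}]$. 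Since $(b_0)\ld(b_{N-1})$ exhaust $k=1\ld\mc L$, this proves the implication. Here $(b_0)$ is just the level-$(n-1)$ implication for $(\theta,x)$; and, granting $(a_i)$, statement $(b_i)$ splits into a ``passage window'' $\mc L_i<k\le\mc L_i+M_{n-1}+2$ — where $\theta+(\mc L_i+l)\w\in\I_{n-1}+l\w\ssq\mc V_{n-1}$ for $l=1\ld M_{n-1}+1$, so there nothing is left beyond the inclusion $f^k_\theta(x)\in[0,1]$, while at $l=M_{n-1}+2$ one has $x_i\in C$ by $(a_i)$ — and a remaining stretch $\mc L_i+M_{n-1}+2<k\le\mc L_{i+1}$, on which one applies the level-$(n-1)$ implication to $(\theta_i,x_i)$. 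The last step is legitimate because $(\mc F1)_{n-1}$ makes consecutive visits of $\I_{n-1}$ more than $2K_{n-1}M_{n-1}\ (\ge M_{n-1}+2)$ apart, so $\mc L_i+M_{n-1}+2<\mc L_{i+1}$ and the first return of $(\theta_i,x_i)$ to $\I_{n-1}$ happens exactly at the time matching $\mc L_{i+1}$, together with $\mc V_{n-2}\ssq\mc V_{n-1}$.

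The heart of the matter is $(a_i)$: $x_i\in C$ and $\theta_i\notin\mc Z^-_{n-1}$. For the first, note that $\I_{n-1}-(M_{n-1}-1)\w$ is disjoint from $\mc V_{n-2}$ by $(\mc F2)_{n-1}$, so the level-$(n-1)$ implication — for $(\theta,x)$ when $i=1$, for $(\theta_{i-1},x_{i-1})$ via $(a_{i-1})$ when $i\ge2$ — gives $f^{\mc L_i-M_{n-1}+1}_\theta(x)\in C$; since also $\theta+(\mc L_i-M_{n-1}+1)\w\in\I_{n-1}-(M_{n-1}-1)\w$, this says $f^{\mc L_i-M_{n-1}+1}(\theta,x)\in\mc A_{n-1}$, hence $f^{\mc L_i}(\theta,x)\in f^{M_{n-1}-1}(\mc A_{n-1})$. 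As $i<N$ and $\mc L$ is the first visit of $\I_n$, we have $\theta+\mc L_i\w\notin\I_n$, so by $\I_n=\pi_{\T^d}\!\big(f^{M_{n-1}-1}(\mc A_{n-1})\cap f^{-(M_{n-1}+1)}(\mc B_{n-1})\big)$ the point $f^{\mc L_i}(\theta,x)$ cannot also lie in $f^{-(M_{n-1}+1)}(\mc B_{n-1})$; but its image $f^{\mc L_i+M_{n-1}+1}(\theta,x)$ has base point in $\I_{n-1}+(M_{n-1}+1)\w$, so the fibre $f^{\mc L_i+M_{n-1}+1}_\theta(x)$ must lie outside $E$, hence in $(e,1]$, and since $\theta+(\mc L_i+M_{n-1}+1)\w\notin\I_0$ (by the separation conditions), \eqref{axiom: 2} yields $x_i\in C$.

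The genuinely delicate point, which I expect to be the main obstacle, is $\theta_i\notin\mc Z^-_{n-1}$. Writing $\theta_i=(\theta+\mc L_i\w)+(M_{n-1}+2)\w$ with $\theta+\mc L_i\w\in\I_{n-1}$, the level-$(n-1)$ contribution to $\mc Z^-_{n-1}$ is excluded by $(\mc F1)_{n-1}$, since $M_{n-1}+2+m\le2M_{n-1}\le2K_{n-1}M_{n-1}$ for $0\le m\le M_{n-1}-2$; the contributions of levels $j<n-1$, however, require a careful accounting of return times combining $(\mc F1)_j$ and $(\mc F2)_j$ at all scales with the growth relations $M_k\in[K_{k-1}M_{k-1},\,2K_{k-1}M_{k-1}-2]$ and the nesting $\I_{n-1}\ssq\I_j$ — precisely the combinatorial bookkeeping carried out in the proof of \cite[Lemma~4.4]{fuhrmann2013NonsmoothSaddleNodesI}, from which this can be quoted. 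The one routine item left — that the orbit segments above stay in $[0,1]$ — follows from \eqref{eq:4}, \eqref{e.monotonicity} and \eqref{axiom: 2}: as the implication itself shows, whenever the base point is near $\I_0$ the fibre is still in $C$ (hence close to $1$), so the orbit neither drops below the invariant graph $\phi^-\ge0$ nor exceeds $1$.
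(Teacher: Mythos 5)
The paper does not reprove this lemma; it cites \cite[Lemma~4.4]{fuhrmann2013NonsmoothSaddleNodesI}, so there is no in-paper proof to compare against. Your double-induction scheme (outer on $n$, inner on $i$) is the natural route, and your argument for $x_i\in C$ is correct: the chain
$f^{\mc L_i-M_{n-1}+1}(\theta,x)\in\mc A_{n-1}\Rightarrow f^{\mc L_i}(\theta,x)\in f^{M_{n-1}-1}(\mc A_{n-1})$, the exclusion from $f^{-(M_{n-1}+1)}(\mc B_{n-1})$ via $\theta+\mc L_i\w\notin\I_n$, and then \eqref{axiom: 2} at the base point $\theta+(\mc L_i+M_{n-1}+1)\w\notin\I_0$, all check out. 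However, you explicitly decline to prove the key assertion $\theta_i\notin\mc Z^-_{n-1}$ for levels $j<n-1$, declaring it a ``careful accounting of return times at all scales'' that must be quoted from the source. This is a genuine gap, and, moreover, a spurious obstacle: the statement follows in one line from the very same $(\mc F2)_{n-1}$ you already invoked. You have $\theta+(\mc L_i+M_{n-1}+1)\w\in\I_{n-1}+(M_{n-1}+1)\w$, and $(\mc F2)_{n-1}$ says this translate of $\I_{n-1}$ is disjoint from $\mc W_{n-2}=\bigcup_{j\leq n-2}\bigcup_{l=-(M_j-1)}^{0}\I_j+l\w$. Hence $\theta+(\mc L_i+M_{n-1}+1+m)\w\notin\I_j$ for all $j\leq n-2$ and $m=0,\ldots,M_j-1$; shifting by one gives $\theta_i+m'\w\notin\I_j$ for $m'=0,\ldots,M_j-2$, which is exactly the level-$j$ part of $\theta_i\notin\mc Z^-_{n-1}$ for $j\leq n-2$. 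Combined with your own $(\mc F1)_{n-1}$ argument for $j=n-1$, this completes $(a_i)$.

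A secondary weakness is the treatment of the conclusion $f^k_\theta(x)\in[0,1]$. You handle the top bound $\leq1$ implicitly (via \eqref{eq:4} and monotonicity), but for the lower bound your closing remark --- that the orbit ``neither drops below the invariant graph $\phi^-\geq0$'' --- presupposes $x>\phi^-(\theta)$, which is not part of $(\mc B1)_n$ and is not obviously implied by $x\in C$. In the passage window $\mc L_i<k\leq\mc L_i+M_{n-1}+1$ you assert ``there nothing is left beyond the inclusion $f^k_\theta(x)\in[0,1]$,'' but you never establish that inclusion there. This part needs an actual argument tracking the fibre coordinate through the window (for instance, using that $f^{\mc L_i-M_{n-1}+1}_\theta(x)\in C$ together with the fact that forward iterates from $\mc A_{n-1}$ to $\I_{n-1}$ and then to $\I_{n-1}+(M_{n-1}+1)\w$ can be controlled, since, as you show, $f^{\mc L_i+M_{n-1}+1}_\theta(x)>e$); as written it is only a gesture toward the claim.
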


\begin{lem}[{cf. \cite[Lemma~4.8]{fuhrmann2013NonsmoothSaddleNodesI}}]\label{lem: estimate for times spent in contracting/expanding regions}
Let $f\in \mc V$ and assume $(\theta,x)$ verifies $(\mc B1)_{n}$ for $n\in \N$. Let $0<\mc L_1<\ldots <\mc L_N=\mc L$ be as in Lemma~\ref{lem: duration of stay in contracting/expanding regions}. Then, for each $i=1,\ldots,N$, we have
\begin{align}\label{eq: Lemma 4.8}
 P_k^{\mc L_i}(\theta,x) \geq b_n (\mc L_i-k) \quad (k=0,\ldots,\mc L_i-1).
\end{align}
Analogously, assume $(\theta,x)$ verifies 
$(\mc B\ref{axiom: B2})_{n}$ for $n\in \N$. Let $0<\mc R_1<\ldots <\mc R_N=\mc R$ be as in Lemma~\ref{lem: duration of stay in contracting/expanding regions}. Then, for each $i=1,\ldots,N$, we have
\begin{align*}
 Q_k^{\mc R_i}(\theta,x) \geq b_n (\mc R_i-k) \quad (k=1,\ldots,\mc R_i-1).
\end{align*}
\end{lem}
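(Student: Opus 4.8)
\emph{Proof proposal.} The plan is to prove the estimate by induction on $n$; I treat the forward (contracting) statement in detail, the backward statement being obtained by the same argument applied to $f^{-1}$, with the second part of Lemma~\ref{lem: duration of stay in contracting/expanding regions} and the expansion estimate \eqref{eq: lipschitz x in E} playing the roles of the first part and of \eqref{axiom: 2}. Using the conventions $\I_{-1}=\I_0$, $M_{-1}=0$, $\mc Z^-_{-1}=\emptyset$, I would establish the bound for all $n\ge 0$ and all admissible return times $\mc L_i$; the cases $n\ge 1$ are the assertion. Throughout, recall that $P_a^N(\theta,x)$ is additive in the middle index, $P_a^N=P_a^m+P_m^N$, and non-increasing in the lower index. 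For the base case $n=0$ the hypothesis $(\mc B1)_0$ just means $x\in C$, there is a single return time $\mc L_1=\mc L$, and $b_0=1$; for $0\le l\le\mc L-1$ one has $\theta+l\w\notin\I_0$ by minimality of $\mc L$, while $f^l_\theta(x)\in C$ follows by induction on $l$ from $x\in C\ssq[e,1]$ and \eqref{axiom: 2} (which gives $f_\theta([e,1])\ssq C$ whenever $\theta\notin\I_0$), so that $P_k^{\mc L}(\theta,x)=\mc L-k=b_0(\mc L-k)$.

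For the inductive step, fix $n\ge 1$, assume the bound at level $n-1$, and let $(\theta,x)$ verify $(\mc B1)_n$; since $\mc Z^-_{n-2}\ssq\mc Z^-_{n-1}$ it also verifies $(\mc B1)_{n-1}$. Let $0<\mc L_1<\dots<\mc L_N=\mc L$ be the returns to $\I_{n-1}$ up to the first return to $\I_n$, as in Lemma~\ref{lem: duration of stay in contracting/expanding regions}. Condition $(\mc F1)_{n-1}$ forces $\I_{n-1}\cap(\I_{n-1}+k\w)=\emptyset$ for $1\le k\le 2K_{n-1}M_{n-1}$, so consecutive returns satisfy $\mc L_{m+1}-\mc L_m>2K_{n-1}M_{n-1}\ge 2M_{n-1}\ge M_{n-1}+2$, using that $(M_j)$ is non-decreasing with $M_0\ge 2$. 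By Lemma~\ref{lem: duration of stay in contracting/expanding regions}, for each $m\in\{1,\dots,N-1\}$ the point $f^{\mc L_m+M_{n-1}+2}(\theta,x)$ again verifies $(\mc B1)_n$, hence $(\mc B1)_{n-1}$, and — since $\I_{n-1}\ssq\I_{n-2}$ and the next return of the original orbit to $\I_{n-1}$ is $\mc L_{m+1}>\mc L_m+M_{n-1}+2$ — its first return to $\I_{n-1}$ (which coincides with the last $\I_{n-2}$-return preceding it) occurs after $\mc L_{m+1}-\mc L_m-M_{n-1}-2$ steps. Applying the level-$(n-1)$ bound to this point and translating back by $\mc L_m+M_{n-1}+2$, and applying it directly to $(\theta,x)$ on the initial stretch up to $\mc L_1$, one obtains
\[
P_{\mc L_m+M_{n-1}+2+a}^{\mc L_{m+1}}(\theta,x)\ \ge\ b_{n-1}\bigl(\mc L_{m+1}-\mc L_m-M_{n-1}-2-a\bigr),\qquad P_k^{\mc L_1}(\theta,x)\ \ge\ b_{n-1}\bigl(\mc L_1-k\bigr),
\]
for $a$ and $k$ in the respective admissible ranges.

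It then remains to sum these estimates along $[k,\mc L_i]$, discarding the non-negative contributions of the ``dead'' intervals $[\mc L_m,\mc L_m+M_{n-1}+2]$. Setting $\mc L_0\=0$ and letting $\mc L_j$ be the last return not exceeding $k$, one distinguishes according to the position of $k$: in the initial stretch ($j=0$); in the body of a block ($\mc L_j+M_{n-1}+2\le k<\mc L_{j+1}$), where one splits $P_k^{\mc L_i}=P_k^{\mc L_{j+1}}+P_{\mc L_{j+1}}^{\mc L_i}$; or inside a dead interval ($\mc L_j\le k<\mc L_j+M_{n-1}+2$), where one first passes to $P_{\mc L_j+M_{n-1}+2}^{\mc L_i}$ by monotonicity and then bounds $\mc L_i-k$ from above by $\mc L_i-\mc L_j$, to avoid an unfavourable subtraction. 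In each case one arrives at a lower bound of the form $b_{n-1}\bigl[T-r(M_{n-1}+2)\bigr]$, where $T\in\{\mc L_i-k,\ \mc L_i-\mc L_j\}$ is the relevant elapsed time and $r$ is the number of dead intervals fully skipped; crucially, every skipped dead interval is preceded by a return gap $\ge 2K_{n-1}M_{n-1}$, so $r(M_{n-1}+2)\le r\cdot 2M_{n-1}<\tfrac1{K_{n-1}}T$, whence $P_k^{\mc L_i}(\theta,x)\ge\bigl(1-\tfrac1{K_{n-1}}\bigr)b_{n-1}T\ge b_n(\mc L_i-k)$, closing the induction.

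The conceptual input is Lemma~\ref{lem: duration of stay in contracting/expanding regions}, which provides the reset points $f^{\mc L_m+M_{n-1}+2}(\theta,x)$; granting it, the main obstacle is the purely combinatorial bookkeeping, uniform in the starting index $k$: one has to check, in each of the few positions $k$ can occupy relative to the returns $\mc L_m$, that the number $r$ of skipped dead intervals is dominated by $1/K_{n-1}$ times the elapsed time, and that the constants $b_n=(1-1/K_{n-1})b_{n-1}$ are exactly calibrated — through the inequalities $2M_{n-1}\ge M_{n-1}+2$ and $M_n\ge K_{n-1}M_{n-1}$ — so that the loss of one dead interval per return is absorbed by the factor $1-1/K_{n-1}$.
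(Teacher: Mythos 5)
The paper does not reprove this lemma — it is quoted from the companion paper \cite{fuhrmann2013NonsmoothSaddleNodesI} (the preceding paragraph explicitly defers the proof there) — so there is no in-text argument to compare against word for word. Judged on its own terms, however, your proposal is correct and uses exactly the ingredients the statement is designed to interact with: Lemma~\ref{lem: duration of stay in contracting/expanding regions} to produce the reset points $f^{\mc L_m + M_{n-1}+2}(\theta,x)$ satisfying $(\mc B1)_n$ (hence $(\mc B1)_{n-1}$), the gap condition $(\mc F1)_{n-1}$ to force $\mc L_{m+1}-\mc L_m > 2K_{n-1}M_{n-1}$, and the recursion $b_n=(1-1/K_{n-1})b_{n-1}$ to absorb the loss of one dead interval of length $M_{n-1}+2\le 2M_{n-1}$ per return gap of length $>2K_{n-1}M_{n-1}$. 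The base case $n=0$, the additivity/monotonicity of $\mc P$, the three-way case split on the position of $k$ relative to $\mc L_j$ and $\mc L_j+M_{n-1}+2$, and the final comparison $r(M_{n-1}+2)<\tfrac1{K_{n-1}}(\mc L_i-\mc L_j)$ all check out. Two small points worth tightening in a written version: (a) when applying the level-$(n-1)$ hypothesis to a reset point you use $i=N'$, i.e.\ the last $\I_{n-2}$-return, which is the first $\I_{n-1}$-return precisely because $\I_{n-1}\ssq\I_{n-2}$ — your parenthetical remark about this is a bit garbled and should be stated plainly; (b) the reduction of the backward statement to the forward one for $f^{-1}$ requires the dual form of \eqref{axiom: 2}, namely $f_\theta^{-1}\bigl([0,c)\bigr)\ssq E$ for $\theta\notin\I_0$, which is indeed a consequence of \eqref{axiom: 2} and the monotonicity \eqref{e.monotonicity}, but deserves a sentence rather than a parenthesis.
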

As before, we consider the iterated upper boundary lines only.
Given fixed $n\in \N$ and $\theta \in \T^d$, we set
\begin{align*}
\theta_k\=\theta-(n-k)\w \quad \text{and} \quad x_k\=f_{\theta_0}^k(1)
\end{align*}
such that
$\phi_k^+(\theta_k)=x_k$.

Let $p\in \N$ and consider a finite orbit $\{(\theta_0,x),\ldots, f^n(\theta_0,x)\}$ which initially verifies $(\mc B1)_p$ and hits $\mc I_p$ only at $\theta_0+n\w$. Lemma~\ref{lem: estimate for times spent in contracting/expanding regions} provides us with a lower bound on the times spent in the contracting region between
any time $k$ and only such following times at which the orbit hits $\I_{p-1}$.
If we want a lower bound on the times in the contracting region between any two consecutive moments $k<l$, we have to deal with the fact that Lemma~\ref{lem: duration of stay in contracting/expanding regions} might allow the orbit to stay in the expanding region for $M_{p-1}+1$ times after hitting $\I_{p-1}$. This is taken care of in the following corollary of Lemma~\ref{lem: duration of stay in contracting/expanding regions} and Lemma~\ref{lem: estimate for times spent in contracting/expanding regions}.

For $\theta\in \T^d$ and $0\leq k \leq n$, set 
\begin{align*}
p_k^n(\theta)=
\max\left\{p\in \N_{0}\:
\exists l \in \left[M_{p-1},\min\left\{n,n-k+M_{p}+1\right\}\right] \text{ such that }
\theta-l\w \in \I_p\right\}
\end{align*}
with $\max \emptyset \=-1$. 
At times, the following (and obviously equivalent) characterisation of $p_k^n(\theta)$ is useful
\begin{align*}
p_k^n(\theta)=
\max\left\{p\in \N_{0}\:
\exists l \in \left[\max\{0,k-M_{p}-1\},n-M_{p-1}\right] \text{ such that }
\theta_l \in \I_p\right\}.
\end{align*}
Observe that $p_\ell^n(\theta)$ and $p_{k-\ell}^{n-\ell}(\theta)$ are non-increasing in $\ell$.
\begin{cor}\label{cor: times in C}
Let $f\in \mc V$ and suppose
$(\theta_0,x)=(\theta-n\w,x)$ satisfies $(\B1)_{p_0^n(\theta)+1}$.
Then 
\begin{align}\label{eq: times in contracting region arbitrary successive times}
\mc P_{k}^{n}(\theta_0,x)\geq b_{p_k^n(\theta)+1} \left(n-k-\sum_{j=0}^{p_k^n(\theta)} (M_{j}+2)\right)\quad \text{for each }k=0,\ldots,n-1.
\end{align}
\end{cor}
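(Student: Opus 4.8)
The plan is to fix $k\in\{0,\dots,n-1\}$, abbreviate $p=p^n_k(\theta)$, $\theta_l=\theta-(n-l)\w$ and $x_l=f^l_{\theta_0}(x)$, and to extract the estimate by combining Lemma~\ref{lem: duration of stay in contracting/expanding regions} with Lemma~\ref{lem: estimate for times spent in contracting/expanding regions}, applied at each critical level in turn. First I would clear away the trivial reductions. Since $\ell\mapsto p^n_\ell(\theta)$ is non-increasing we have $p\le p^n_0(\theta)$, and since $\mc Z^-_p\ssq\mc Z^-_{p^n_0(\theta)}$ the assumed property $(\B1)_{p^n_0(\theta)+1}$ of $(\theta_0,x)$ yields $(\B1)_j$ for every $0\le j\le p+1$. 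The case $p=-1$ is then immediate: Lemma~\ref{lem: duration of stay in contracting/expanding regions} at level $0$ forces $x_l\in C$ up to the first $\I_0$-visit of $(\theta_l)_l$, while $p^n_k(\theta)=-1$ excludes such a visit in $[k,n-1]$, so $\mc P^n_k(\theta_0,x)=n-k=b_0(n-k)$ because $b_0=1$ and the empty sum vanishes. So I would assume $p\ge0$ from now on.

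For the main argument I would work level by level, $j=p+1,p,\dots,1$. Applied to a point verifying $(\B1)_j$, Lemma~\ref{lem: duration of stay in contracting/expanding regions} supplies exactly what is needed: first, every index $l$ before the first $\I_j$-visit with $x_l\notin C$ lies in $\mc V_{j-1}$, i.e.\ is an $\w$-shift of bounded length of a visit to one of $\I_0,\dots,\I_{j-1}$; and second, right after a visit to $\I_{j-1}$ at time $\mc L_i$ the point $f^{\mc L_i+M_{j-1}+2}(\cdot)$ again verifies $(\B1)_j$. Feeding the ordered list $0<\mc L_1<\dots<\mc L_N$ of these $\I_{j-1}$-visits into Lemma~\ref{lem: estimate for times spent in contracting/expanding regions} gives $\mc P^{\mc L_i}_k(\cdot)\ge b_j(\mc L_i-k)$; splitting a general time window at the $\mc L_i$'s and re-running Lemma~\ref{lem: estimate for times spent in contracting/expanding regions} from the fresh points furnished by the second fact upgrades this to a lower bound for $\mc P^l_k(\cdot)$ with arbitrary $l$, at the cost of one summand $M_{j-1}+2$ per intervening $\I_{j-1}$-visit (precisely the slack left by Lemma~\ref{lem: duration of stay in contracting/expanding regions}, whose orbit may linger in $E$ for up to $M_{j-1}+1$ steps after hitting $\I_{j-1}$, plus the one step at the visit itself where $(\B1)_j$ is lost).

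The estimate is then assembled by telescoping. Because $p=p^n_k(\theta)$, the definition of $p^n_k$ confines the visits of the orbit through $(\theta_0,x)$ to $\I_{p+1}$ to a window so short that, up to an error absorbed into $M_p+2$, the level-$(p+1)$ bound governs all of $[k,n]$ and produces the prefactor $b_{p+1}$ together with the loss $M_p+2$. Each maximal stretch between two consecutive $\I_p$-visits is again an orbit segment verifying $(\B1)_p$, so the level-$p$ bound applies there and contributes $M_{p-1}+2$; continuing down to level $1$, whose loss $M_0+2$ simultaneously covers the wasted steps after each $\I_0$-visit and the $\I_0$-visits themselves (which $\mc P^n_k$ does not count), the losses add up to exactly $\sum_{j=0}^p(M_j+2)$. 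Here the nestedness $\I_p\ssq\dots\ssq\I_0$ of the critical regions, together with the conditions $(\mc F1)_j$ that bound the recurrence of each $\I_j$, is what makes the losses accumulate additively over levels rather than per visit or multiplicatively; and each rate $b_j$ with $j\le p+1$ appearing in an intermediate bound is replaced by the smaller $b_{p+1}$ via $b_0\ge b_1\ge\dots\ge b_{p+1}$, which is legitimate as these occur only in lower bounds.

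I expect the genuine obstacle to be this bookkeeping rather than any isolated estimate. One has to match every sub-segment of $[k,n]$ with the level of Lemma~\ref{lem: duration of stay in contracting/expanding regions} that governs it, check that the ``fresh'' points $f^{\mc L_i+M_{j-1}+2}(\cdot)$ really satisfy the $(\B1)$-hypothesis at the correct level -- this is the precise reason the corollary demands $(\B1)_{p^n_0(\theta)+1}$ rather than some lower-level condition -- and ensure that the neighbourhoods of critical visits where contraction can fail are absorbed into the $+2$ summands without being counted twice. The monotonicity of $p^n_\ell(\theta)$ and $p^{n-\ell}_{k-\ell}(\theta)$ in $\ell$, recorded just before the statement, is what keeps the induction over levels consistent.
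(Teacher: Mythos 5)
Your plan has the right ingredients, but the decisive step is misstated in a way that breaks the accounting. You claim that passing from the special endpoints $\mc L_i$ of \eqref{eq: Lemma 4.8} to an arbitrary endpoint costs ``one summand $M_{j-1}+2$ per intervening $\mc I_{j-1}$-visit,'' and later that ``each maximal stretch between two consecutive $\mc I_p$-visits \ldots\ contributes $M_{p-1}+2$.'' Taken literally, this charges $M_{j-1}+2$ for \emph{every} $\mc I_{j-1}$-visit inside the window, of which there can be arbitrarily many, and is incompatible with the claimed total slack of $\sum_{j=0}^{p}(M_j+2)$, which has exactly one summand per level. You then invoke nestedness and $(\mc F1)_j$ to collapse the per-visit count to a per-level count, but that is not the operative mechanism. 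What actually makes the bound work is that estimate \eqref{eq: Lemma 4.8} already reaches the \emph{last} $\mc I_p$-visit $\mc L$ (at or before $n-M_{p-1}$) directly from $k$ with no loss at all, $\mc P_k^{\mc L}(\theta_0,x)\geq b_{p+1}(\mc L-k)$, irrespective of how many earlier $\mc I_p$-visits lie in $[k,\mc L]$; only the tail $[\mc L,n]$ past the final visit is handed down to a lower level, and it is there alone that a single $M_p+2$-step skip is surrendered in passing to the fresh point $f^{\mc L+M_p+2}(\theta_0,x)$, which satisfies $(\mc B1)_{p+1}$ by Lemma~\ref{lem: duration of stay in contracting/expanding regions}.

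The paper implements exactly this as a formal induction on $p$ over the family $\Theta_p=\bigl\{(\theta,x,n): p_0^n(\theta)\leq p \text{ and } (\theta-n\w,x)\text{ satisfies }(\mc B1)_{p_0^n(\theta)+1}\bigr\}$, using the split $\mc P_k^n=\mc P_k^{\mc L}+\mc P_{\mc L}^n$ and three ranges of $k$: $k\geq\mc L+M_p+2$ (inductive hypothesis at the fresh point), $k\in[\mc L,\mc L+M_p+1]$ (either $M_p>n-k-2$ makes the bound vacuous, or the $M_p+2$ skip is absorbed into the sum via $p_k^n(\theta)=p$), and $k<\mc L$ (combine \eqref{eq: Lemma 4.8} on $[k,\mc L]$ with the already-treated case on $[\mc L,n]$). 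Note also that the induction parameter is $p_0^n(\theta)$, not the $p=p_k^n(\theta)$ you freeze at the outset: this matters because the base case $p_0^n(\theta)=-1$ genuinely rules out any $\mc I_0$-visit on $[0,n]$, whereas $p_k^n(\theta)=-1$ alone still permits $\mc I_0$-visits before time $k-M_0-1$, which your one-line treatment of the case $p=-1$ silently skips over. Reorganizing around the last-visit decomposition closes both gaps.
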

\begin{proof}
For integers $p\geq-1$, set
\[
 \Theta_p \=\left\{(\theta,x,n)\in \T^d\times [c,1]\times \N\: p_0^n(\theta)\leq p \text{ and } (\theta-n\w,x) \text{ satisfy } (\mc B1)_{p_0^n(\theta)+1}\right\}.
\]
We say \emph{\eqref{eq: times in contracting region arbitrary successive times} holds within $\Theta_p$} if \eqref{eq: times in contracting region arbitrary successive times} is true for all $(\theta,x,n)\in\Theta_p$. We show by induction on $p$ that \eqref{eq: times in contracting region arbitrary successive times} holds within $\Theta_p$ for all $p$.
Note that within $\Theta_{-1}$ \eqref{eq: times in contracting region arbitrary successive times} follows directly from $(\ref{axiom: 2})$.

Suppose there is an integer $p_0\geq-1$ so that \eqref{eq: times in contracting region arbitrary successive times} holds within $\Theta_{p_0}$. 
Set $p=p_0+1$ and fix $(\theta,x,n)\in\Theta_p\setminus\Theta_{p_0}$ which is assumed to be non-empty without loss of generality.
Let $\mc L$ be the largest positive integer not bigger than $n-M_{p-1}$ such that
$\theta_{\mc L}\in \mc I_{p}$ and assume without loss of generality that $\mc L<n$. Note that
$p_{\mc L}^n(\theta)=p$.
First, let $k\in[\mc L,n-1]$. There are two cases to be considered.
\begin{enumerate}[(a)]
\item
Suppose $\mc L> n-M_p-2$. Then
$\mc L\in [\max\{0,k-M_p-1\},n-M_{p-1}]$ for all $k\leq n-1$, by definition of $\mc L$. 
Hence, $p_k^n(\theta)=p$ for all $k\in[\mc L,n-1]$ since $\theta_{\mc L}\in \I_p$. 
Thus, $k\geq \mc L> n-M_{p_k^n(\theta)}-2$ and so $M_{p_k^n(\theta)}> n-k-2$ so that \eqref{eq: times in contracting region arbitrary successive times} holds trivially. 
\item
Suppose $\mc L\leq n-M_p-2$. Without loss of generality, we may assume $n>\mc L+M_p+2$.
First, consider $k\geq\mc L+M_{p}+2$.
Then $p_k^n(\theta)<p$ and hence 
$p_{k-(\mc L+M_{p}+2)}^{n-(\mc L+M_{p}+2)}(\theta)\leq p_k^n(\theta)<p$.
Further by Lemma~\ref{lem: duration of stay in contracting/expanding regions}, $f^{\mc L+M_{p}+2}(\theta_0,x)$ satisfies $(\B1)_{p+1}$, and thus $(\B1)_{p_0+1}$.
Hence, we get 
\begin{align*}
&\mc P_{k}^{n}(\theta_0,x)=\mc P_{k-(\mc L+M_{p}+2)}^{n-(\mc L+M_{p}+2)}(f^{\mc L+M_{p}+2}(\theta_0,x))\geq b_{p_{k-(\mc L+M_{p}+2)}^{n-(\mc L+M_{p}+2)}(\theta)+1} \left(n-k-\sum_{j=0}^{p_{k-(\mc L+M_{p}+2)}^{n-(\mc L+M_{p}+2)}(\theta)} (M_{j}+2)\right)\\
&\geq
b_{p_k^n(\theta)+1} \left(n-k-\sum_{j=0}^{p_k^n(\theta)} (M_{j}+2)\right),
\end{align*}
where the first estimate follows by the induction hypothesis and the last estimate from the fact that $b_q$ is decreasing in $q$. Now, if $k\in [\mc L,\mc L+M_p+1]$, then
\begin{align*}
\mc P_{k}^{n}(\theta_0,x)&= \mc P_{k}^{\mc L+M_p+2}(\theta_0,x)+\mc P_{\mc L+M_p+2}^{n}(\theta_0,x) \geq \mc P_{\mc L+M_p+2}^{n}(\theta_0,x)\\
& \geq b_{p_{\mc L+M_p+2}^n(\theta)+1} \left(n-\mc L-M_p-2-\sum_{j=0}^{p_{\mc L+M_p+2}^n(\theta)} (M_{j}+2)\right)\\
&\geq b_{p_{k}^n(\theta)+1} \left(n-k-M_p-2-\sum_{j=0}^{p_{\mc L+M_p+2}^n(\theta)} (M_{j}+2)\right)\geq b_{p_{k}^n(\theta)+1} \left(n-k-\sum_{j=0}^{p_{k}^n(\theta)} (M_{j}+2)\right),
\end{align*}
where the last estimate holds since 
$p_k^n(\theta)=p$ for $k\leq \mc L+M_p+1$.
\end{enumerate}
We have thus shown
\begin{align}\label{eq: k in [L,n-1]}
 \mc P_{k}^{n}(\theta_0,x)\geq b_{p_{k}^n(\theta)+1} \left(n-k-\sum_{j=0}^{p_{k}^n(\theta)} (M_{j}+2)\right)
\end{align}
for $k\in[\mc L,n-1]$.

It remains to consider $k< \mc L$. Since $p_k^n(\theta)\geq p_{\mc L}^n(\theta)=p$, we obtain
\begin{align*}
 \mc P_{k}^{n}(\theta_0,x)&=\mc P_{k}^{\mc L}(\theta_0,x)+
\mc P_{\mc L}^{n}(\theta_0,x)\geq b_{p+1} (\mc L-k)+
b_{p_{\mc L}^n(\theta)+1} \left(n-\mc L-\sum_{j=0}^{p_{\mc L}^n(\theta)} M_{j}+2\right)\\
&\geq b_{p+1} \left(n-k-\sum_{j=0}^{p} M_{j}+2\right),
\end{align*}
where we used equation \eqref{eq: Lemma 4.8} and (\ref{eq: k in [L,n-1]}) in the first estimate.
As $(\theta,x,n)$ was arbitrary in $\Theta_p\setminus\Theta_{p_0}$, this shows that \eqref{eq: times in contracting region arbitrary successive times} holds within $\Theta_p$.
\end{proof}
For $k, n \in \N$, set $i_k^n\=\max\{l\:n-k\geq 2K_lM_l-M_l-1\}$.
\begin{prop}\label{prop: pk < ik}
Suppose $\theta \in \Omega_j^n$ for some $j\in \N$. Then $i_k^n\geq p_k^n(\theta)$ for all 
$0\leq k\leq n- (2K_{j-1}M_{j-1}-M_{j-1}-1)$.
\end{prop}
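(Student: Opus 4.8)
The plan is to fix $k$ in the stated range, put $p\=p_k^n(\theta)$, and show directly that $n-k\geq 2K_pM_p-M_p-1$; by the very definition of $i_k^n$ this inequality says exactly that $p$ lies in the set over which the maximum defining $i_k^n$ is taken, hence $i_k^n\geq p$. The argument then splits according to whether $p<j$ or $p\geq j$.

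Before that I would record two elementary facts. First, $l\mapsto 2K_lM_l-M_l-1=M_l(2K_l-1)-1$ is increasing in $l$, since $K_l=K_0\kappa^l$ is increasing and $M_l\geq K_{l-1}M_{l-1}\geq 2M_{l-1}$ is increasing as well. Second, $i_k^n$ is well defined for the $k$ under consideration: the hypothesis $k\leq n-(2K_{j-1}M_{j-1}-M_{j-1}-1)$ together with $j\geq 1$ and the monotonicity just noted gives $n-k\geq 2K_0M_0-M_0-1$, so at least the index $l=0$ meets the defining inequality of $i_k^n$; this also disposes of the degenerate case $p=-1$, where $i_k^n\geq 0>p$ is then immediate.

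If $p\leq j-1$, then monotonicity and the hypothesis at once give $2K_pM_p-M_p-1\leq 2K_{j-1}M_{j-1}-M_{j-1}-1\leq n-k$, hence $i_k^n\geq p$. If instead $p\geq j$, I would use the definition of $p_k^n(\theta)$ to choose $l\in[M_{p-1},\min\{n,n-k+M_p+1\}]$ with $\theta-l\w\in\I_p$, and then invoke $\theta\in\Omega_j^n$ at the index $p\geq j$, which forbids $\theta-l'\w\in\I_p$ for every $l'\in[M_{p-1},\min\{n,2K_pM_p\}]$. Since $M_{p-1}\leq l\leq n$, the only way $l$ can escape this forbidden window is that $\min\{n,2K_pM_p\}=2K_pM_p$ (otherwise the window would be the whole of $[M_{p-1},n]$, which contains $l$) and $l>2K_pM_p$. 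Feeding this back into $l\leq\min\{n,n-k+M_p+1\}\leq n-k+M_p+1$ gives $2K_pM_p<n-k+M_p+1$, i.e.\ $n-k>2K_pM_p-M_p-1$, so again $i_k^n\geq p$.

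The whole argument is essentially bookkeeping with the definitions of $p_k^n$, $i_k^n$ and $\Omega_j^n$, plus the monotonicity of $l\mapsto 2K_lM_l-M_l-1$. The one place that needs a moment's thought is the case $p\geq j$, where one must combine membership of the single index $l$ in the admissible window of $p_k^n(\theta)$ with its exclusion from the forbidden window inherited from $\Omega_j^n$ to extract the strict inequality $l>2K_pM_p$; I expect that to be the (mild) main obstacle, while the case $p<j$ and the edge cases ($p=-1$, $k$ near the right endpoint of its range) are pure monotonicity.
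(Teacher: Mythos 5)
Your proof is correct and follows essentially the same route as the paper's: fix $p=p_k^n(\theta)$, handle $p\leq j-1$ by the right-endpoint assumption on $k$ (the paper phrases this as $i_k^n\geq j-1$, you via the monotonicity of $l\mapsto 2K_lM_l-M_l-1$, which amounts to the same thing), and for $p\geq j$ use the witnessing index $l$ together with the exclusion coming from $\Omega_j^n$ to derive $n-k>2K_pM_p-M_p-1$. You are a bit more careful than the paper about the role of the $\min\{n,2K_pM_p\}$ in the definition of $\Omega_j^n$ and about the degenerate case $p=-1$, but these are details the paper leaves implicit, not a different argument.
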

\begin{proof}
Note that by the assumptions $i_k^n\geq j-1$. 
Thus, without loss of generality we may assume $p_k^n(\theta)> j-1$.
By definition of $p_k^n(\theta)$, there is $l\in \left[M_{p_k^n(\theta)-1},n-k+M_{p_k^n(\theta)}+1\right]$ such that $\theta-l\w \in \mc I_{p_k^n(\theta)}$.
Since $\theta \in \Omega_j^n$, this implies
$l>2K_{p_k^n(\theta)}M_{p_k^n(\theta)}$ and thus,
$n-k>2K_{p_k^n(\theta)}M_{p_k^n(\theta)}-M_{p_k^n(\theta)}-1$ which means
$i_k^n\geq p_k^n(\theta)$.
\end{proof}

\begin{proof}[Proof of Proposition~\ref{prop: properties iterated boundary lines}]
Let $\theta \in \Omega_j^n$ and let $\mc L$ be the smallest positive integer such that $\theta_0- \mc L\w=\theta- (\mc L +n) \w \in \I_{p_n^n(\theta)}$.
Then $(\theta_0-(\mc L-1)\w,1)$ satisfies $(\mc B1)_{p_n^n(\theta)+1}$ because of $(\mc F1)_{p_n^n(\theta)}$.
By (\ref{eq:4}) and by the monotonicity \eqref{e.monotonicity} of the fibre maps, we have the implication
\begin{align*}
 f_{\theta_0-(\mc L-1)\w}^{\mc L-1+k}(1) \in C \implies f_{\theta_0}^k(1) \in C,
\end{align*}
for all $k\geq 0$. Further, we observe that
$p_0^n(\theta)=p_{\mc L-1}^{\mc L-1+n}(\theta)$ and actually 
$p_k^n(\theta)=p_{\mc L-1+k}^{\mc L-1+n}(\theta)$
for all $k=0,\ldots,n$.
By Corollary~\ref{cor: times in C}, we thus get
\begin{align}\label{eq: 20}
\begin{split}
 \mc P_{k}^{n}(\theta_0,1)&\geq \mc P_{\mc L-1+k}^{\mc L-1+n}(\theta_0-(\mc 
L-1)\w,1)\geq
b_{p_k^n(\theta)+1} \left(n-k-\sum_{\ell=0}^{p_k^n(\theta)} (M_{\ell}+2)\right)\\
& \stackrel{\text{Proposition~\ref{prop: pk < ik}}}{\geq}
b_{i_k^n+1} \left(n-k-\sum_{\ell=0}^{i_k^n} (M_{\ell}+2)\right),
\end{split}
\end{align}
for $0\leq k\leq n - (2K_{j-1}M_{j-1}-M_{j-1}-1)$. Now, note that 
$\sum_{\ell=0}^{i_k^n} (M_{\ell}+2)\leq 3/2 M_{i_k^n}$ for large enough
$i_k^n$ (and hence, for large enough $j$ since $i_k^n\geq j-1$). Further, $(n-k)/K_{i_k^n}\geq
2M_{i_k^n}-M_{i_k^n}/K_{i_k^n}-1/K_{i_k^n}$ by definition of
$i_k^n$. 
Thus for large enough $j$, we 
have $\sum_{\ell=0}^{i_k^n} (M_{\ell}+2)\leq (n-k)/K_{i_k^n}$ and so by \eqref{eq: 20}
\begin{align}\label{eq: P n-k n}
 \mc P_{k}^{n}(\theta_0,1)\geq b_{i^n_k+1} (1-1/K_{i_k^n}) (n-k) \geq b^2(n-k).
\end{align}

We hence have
\begin{align*}
&|\phi_{n}^+(\theta)-\phi_{n-1}^+(\theta)|\\
&=\phi^+_{n-1}(\theta)-\phi^+_{n}(\theta)=
\left(\phi^+_0(\theta_1)-\phi^+_1(\theta_1)\right) \cdot \prod_{k=1}^{n-1}
\frac{\phi^+_k(\theta_{k+1})-\phi^+_{k+1}(\theta_{k+1})}{\phi^+_{k-1}(\theta_{k})-\phi^+_{k}(\theta_{k})}\\
&\leq  \prod_{k=1}^{n-1}
\frac{f_{\theta_k}\left(\phi^+_{k-1}(\theta_{k})\right)-f_{\theta_k}\left(\phi^+_{k}(\theta_{k})\right)}{\phi^+_{k-1}(\theta_{k})-\phi^+_{k}(\theta_{k})}
\leq  \alpha^{p\left((n-1)-\mc P_{1}^{n}(\theta_0,1)\right)-2\mc P_{1}^n(\theta_0,1)/p}\\
&\stackrel{(\ref{eq: P n-k n})}{\leq} \alpha^{\left(p(1-b^2)-2b^2/p\right)(n-1)},
\end{align*}
where we assumed--without loss of generality--that $\phi^+_{k-1}(\theta_k)-\phi^+_{k}(\theta_k)>0$ for all $k\in\{1,\ldots,n\}$. 
This proves the first part with $\lam =2b^2/p-p(1-b^2)>0$.

Let $\wp_k^n(\theta,\theta')\=\#\left\{\ell\in[k,n-1]\cap \N_0\: x_\ell,x_\ell' \in C\right\}$ for
$\theta,\theta'\in \T^d$.
By induction on $n$, we first show that for all $n\in\N$
\begin{align}\label{eq: induction step n}
 |\phi^+_n(\theta)-\phi^+_n(\theta')|\leq S d(\theta,\theta') \sum_{k=1}^{n} \alpha^{p \left(n-k-\wp_{k}^n(\theta,\theta')\right)-2\wp_{k}^n(\theta,\theta')/p}.
\end{align}
For $n=1$, this is equation (\ref{eq: lipschitz theta}). Suppose \eqref{eq: induction step n} holds for some $n\in \N$. Since
$\wp_{k}^n(\theta-\w,\theta'-\w)+\wp_{n}^{n+1}(\theta,\theta')=\wp_{k}^{n+1}(\theta,\theta')$, this yields
\begin{align*}
\left|\phi_{n+1}^+(\theta)-\phi_{n+1}^+(\theta')\right|&=
\left|f_{\theta-\w}\left(\phi_{n}^+(\theta-\w)\right)-f_{\theta'-\w}\left(\phi_{n}^+(\theta'-\w)\right)\right|\\
&\leq
\alpha^{p\left(1-\wp_{n}^{n+1}(\theta,\theta')\right)-\frac2p\wp_{n}^{n+1}(\theta,\theta')}
|\phi_n^+(\theta-\w)-\phi_n^+(\theta'-\w)|+Sd(\theta-\w,\theta'-\w)\\
&\leq S d(\theta,\theta') \sum_{k=1}^{n+1}
\alpha^{p 
\left(n+1-k-\wp_{k}^{n+1}(\theta,\theta')\right)-2\wp_{k}^{n+1}(\theta,\theta')/p}
\end{align*}
where we used \eqref{eq: lipschitz x}--\eqref{eq: lipschitz x in C} in the first estimate and the induction hypothesis in the last step.
Hence, equation \eqref{eq: induction step n} holds.

Now, consider sufficiently large $j$ and suppose $\theta,\theta' \in \Omega_j^n$. Suppose $n >2K_{j-1}M_{j-1}-M_{j-1}-1$ and observe that
equation \eqref{eq: P n-k n} gives
\begin{align*}
\wp_{k}^{n}(\theta,\theta')
\geq n-k-(2(n-k)-\mc P_{k}^n(\theta)-\mc P_{k}^n(\theta')) \geq
n-k-2(1-b^2)(n-k)=(2b^2-1)(n-k)
\end{align*}
for all $k=0,\ldots,n - (2K_{j-1}M_{j-1}-M_{j-1}-1)$.
Plugging this into (\ref{eq: induction step n}) yields
\begin{align*}
 &|\phi^+_{n}(\theta)-\phi^+_{n}(\theta')|\\
&\leq S d(\theta,\theta') 
\left(\sum_{k=1}^{n-2K_{j-1}M_{j-1}-M_{j-1}-1}
\alpha^{\left(2p\left(1-b^2\right)-2(2b^2-1)/p\right)(n-k)}+
\sum_{k=n-2K_{j-1}M_{j-1}-M_{j-1}}^{n} \alpha^{p\left(n-k-\wp_{k}^{n}(\theta,\theta')\right)-2\wp_{k}^{n}(\theta,\theta')/p} 
\right)\\
&\leq L_j d(\theta,\theta'),
\end{align*}
where $L_j\=S \cdot 
\left(\sum_{l=2K_{j-1}M_{j-1}-M_{j-1}-1}^\infty
\alpha^{\left(2p\left(1-b^2\right)-2 
(2b^2-1)/p\right)l}+ 
\sum_{l=0}^{2K_{j-1}M_{j-1}-M_{j-1}}\alpha^{pl} 
\right)$. It is immediate that 
$|\phi^+_{n}(\theta)-\phi^+_{n}(\theta')|\leq L_j d(\theta,\theta')$
 holds for 
$n\leq2K_{j-1}M_{j-1}-M_{j-1}-1$, too.
Finally, observe that there is $C>0$ (independent of $j$) such that $L_j\leq\eps_j^{-CK_{j-1}}$ for large enough $j$.\qedhere
\end{proof}

\bibliography{Literaturnachweis_SNA,dynamics}{}
\bibliographystyle{unsrt}
\end{document}